\newcommand{\ms}{\mathsf{m}}
\newcommand{\Ts}{\mathsf{T}}
\newcommand{\gv}{{\bf g}}
\newcommand{\uv}{{\bf u}}
\newcommand{\vv}{{\bf v}}
\newcommand{\wv}{{\bf w}}
\newcommand{\xv}{{\bf x}}
\DeclareMathOperator\E{E}
\DeclareMathOperator\R{R}
\def\textiid{i.i.d.\@\xspace}
\newcommand\iid{\ifmmode\text{ i.i.d. } \else \textiid \fi}
\newcommand{\beqs}{\begin{equation*}}
\newcommand{\eeqs}{\end{equation*}}
\newcommand{\beq}{\begin{equation}}
\newcommand{\eeq}{\end{equation}}
\DeclareMathOperator*{\argmin}{arg\,min}
\newcommand{\1}{\mathbf{1}}
\let\argmin\relax
\newtheorem{thm}{Theorem}[section]
\newtheorem{assumption}{Assumption}
\newtheorem{lemma}{Lemma}
\newtheorem{remark}{Remark}
\newtheorem{example}{Example}
\crefname{result}{result}{results}
\crefname{result}{step}{steps}
\crefname{assumption}{assumption}{assumptions}
\crefname{lem}{lemma}{lemmas}
\Crefname{lem}{Lemma}{Lemmas}
\begin{document}

\runningtitle{Optimal Local Convergence Rates under Local $\alpha$--P\L}
\runningauthor{Masiha et al.}

%

%

\twocolumn[

\aistatstitle{Optimal Local Convergence Rates of Stochastic First-Order Methods under Local $\alpha$-P\L}

\aistatsauthor{ Saeed Masiha$^{1}$ \And Saber Salehkaleybar$^{2}$ \And  Niao He$^{3}$ \And Negar Kiyavash$^{1}$ \And Patrick Thiran$^{4}$ }

\aistatsaddress{ $^{1}$EPFL School of Management of Technology \\  $^{2}$Leiden University Computer Science institute (LIACS) \\ $^{3}$ETH Department of Computer Science \\ $^{4}$EPFL Department of Computer and Communications Sciences } ]

\begin{abstract}
We study the local convergence rate of stochastic first-order methods under a \emph{local} $\alpha$–Polyak--{\L}ojasiewicz ($\alpha$–P{\L}) condition in a neighborhood of a target connected component $\mathcal M$ of the local minimizer set. The parameter $\alpha\in[1,2]$ is the exponent of the gradient norm in the $\alpha$–P{\L} inequality: $\alpha=2$ recovers the classical P{\L} case, $\alpha=1$ corresponds to H\"older-type error bounds, and intermediate values interpolate between these regimes. Our performance criterion is the number of oracle queries required to output $\hat x$ with $F(\hat x)-l\le\varepsilon$, where $l:=F(y)$ for any $y\in\mathcal M$. We work in a local regime where the algorithm is initialized near $\mathcal M$ and, with high probability, its iterates remain in that neighborhood.
We establish a lower bound $\Omega(\varepsilon^{-2/\alpha})$ for all stochastic first-order methods in this regime, and we obtain a matching upper bound $\mathcal O(\varepsilon^{-2/\alpha})$ for $1\le \alpha<2$ via a SARAH-type variance-reduced method with time-varying batch sizes and step sizes. 
In the convex setting, assuming a local $\alpha$–P{\L} condition on the $\varepsilon$-sublevel set, we further show a complexity lower bound $\widetilde{\Omega}(\varepsilon^{-2/\alpha})$ for reaching an $\varepsilon$-global optimum, matching the $\varepsilon$-dependence of known accelerated stochastic subgradient methods.
\end{abstract}

\section{Introduction}
Stochastic first-order methods are the workhorses of modern large-scale learning \citep{bottou2018optimization}. Classical analyses for such methods typically assume global convexity or impose global growth conditions \citep{karimi2016linear}. However, such assumptions do not reflect the training landscape of deep and structured models, where the objectives are highly nonconvex, yet once the iterates enter a favorable local region, simple gradient-based schemes often converge rapidly. A more realistic assumption is perhaps \emph{local} growth conditions, such as the Polyak--{\L}ojasiewicz (P{\L}) and Kurdyka--{\L}ojasiewicz (K{\L}) inequalities, or their H\"older-type generalizations \citep{attouch2010proximal,drusvyatskiy2016errorbound,li2018kl,jiang2022ebkl}. 
Unlike strong convexity, these conditions do not imply the uniqueness of the minimizer. This distinction is crucial in deep learning, where permutation and rescaling symmetries, as well as over-parameterization, typically results in connected sets of minimizers rather than a single point  \citep{freeman2016topology,simsekli2021geometry,neyshabur2015pathsgd,garipov2018modeconnectivity,draxler2018essentially}.
We formalize this behavior by working with a \emph{local $\alpha$-P{\L}} condition: there exist $\tau>0$, $\alpha\in[1,2]$, and a neighborhood $\mathcal U$ of the target component $\mathcal M$ of the local minimizers of $F$ such that, with $l:=F(y)$ for any $y\in\mathcal M$, $F(x)-l \le \tau\|\nabla F(x)\|^\alpha$ for all $x\in\mathcal U$.\\
In this work, we consider the following setting: Fix a connected component $\mathcal M$ of local minimizers and a neighborhood $\mathcal U\supset\mathcal M$ where $F$ satisfies the local $\alpha$-P{\L} inequality and ask the question:
\vspace{-\baselineskip}
\begin{quote}
\textit{How does the exponent $\alpha$ quantitatively influence the optimal number of oracle calls needed to output $\hat x\in\mathcal U$ with
$\mathbb{E}[F(\hat x)]-l\le \varepsilon$?}
\end{quote}
\vspace{-\baselineskip}
\paragraph{Setting.}
We work with a \emph{batch-smooth} stochastic first-order oracle (SFO), denoted by $\mathsf O^{\tilde L}_{\sigma}$, which (i) returns unbiased gradient estimates with variance bounded by $\sigma^{2}$  (Eq.~\eqref{eq_stoch_1st_order_oracle}, Sec.~\ref{setup_noncvx}), (ii) is $\tilde{L}$-mean-square smooth across query points (Eq.~\eqref{eq-L-avg-smooth}, Sec.~\ref{setup_noncvx}), and (iii) upon $K$ simultaneous queries returns gradients that share the same randomness (Eq.~\eqref{oracle_class}, Sec.~\ref{setup_noncvx}).\\
Let $\mathsf A$ be any stochastic first-order method interacting with $O\in\mathsf O^{\tilde L}_{\sigma}$. 
Assume $\mathsf A$ is initialized sufficiently close to a target connected component $\mathcal M$ of local minimizers and satisfies a \emph{stay-in-neighborhood} property: with high probability, all iterates remain in a fixed neighborhood $\mathcal U$ of $\mathcal M$. This assumption is standard in analyses of local growth/error–bound conditions and can be justified for gradient-type methods under local smoothness and bounded-variance noise \citep{weissmann2025almost,mertikopoulos2020almost}.\\
The \emph{oracle complexity} of $\mathsf A$ is the number of oracle calls required to obtain $\hat x$ with $\mathbb E[F(\hat x)]-l\le \varepsilon$, where $l:=F(y)$ for every $y\in\mathcal M$. We assume that, in a neighborhood of $\mathcal M$, the objective $F$ is locally Lipschitz and locally smooth, and that it satisfies a local $\alpha$–P{\L} condition.

Throughout, we focus on $\alpha\in[1,2]$; the case $\alpha=2$ recovers the classical P{\L} inequality \citep{karimi2016linear}, while $\alpha=1$ corresponds to gradient-dominance with exponent~$1$ \citep{masiha2022stochastic,liu2020improved}.
\paragraph{Motivation for studying local $\alpha$-P{\L} setting.}
The local $\alpha$-P{\L} assumption holds for a wide class of objectives. By the Kurdyka--{\L}ojasiewicz (K{\L}) theory, \emph{every real-analytic function} satisfies a local K{\L} inequality around each critical point; in particular, for any critical point $x_\star$ there exist $\tau>0$, $\alpha\in(1,2]$\footnote{Note that the exponent $\alpha$ is local and may vary from point to point.}, and a neighborhood $\mathcal N$ of $x_\star$ such that $F(x)-F(x_\star)\ \le\ \tau\,\|\nabla F(x)\|^{\alpha}$ for all $x\in\mathcal N$.
This follows from the classical {\L}ojasiewicz gradient inequality and modern K{\L} extensions \citep{bolte2007lojasiewicz,attouch2010proximal,kurdyka1998gradients}. In particular, a nondegenerate local minimum typically has $\alpha=2$, recovering the classical P{\L} regime \citep{attouch2010proximal}. When the Hessian is rank-deficient and the first nonzero term in the Taylor expansion is of order $p>2$, the local P\L\ exponent satisfies $\alpha=\frac{p}{p-1}\in(1,2)$; see Appendix~\ref{append:gradient-dominated functions} for more details and closed-form examples (such as isolated and manifold minimizers, and a deep-linear product loss).

The K{\L} property extends to broad \emph{tame} classes—semialgebraic/\emph{definable}\footnote{see Appendix~\ref{app:kl-examples} for the definition.} objectives—that encompass many standard ML losses \citep{bolte2007clarke,attouch2010proximal}. 
They include least-squares/ridge, logistic or cross-entropy, ReLU networks (often with $\ell_1/\ell_2$ regularization), and polynomial matrix factorization (see Appendix~\ref{app:kl-examples} for more details). In all these cases the empirical risk is definable, hence satisfies a local K{\L} inequality and therefore exhibits local $\alpha$-P{\L} growth near critical points (see related references in Appendix~\ref{app:kl-examples}). Moreover, in \Cref{sec:experiment}, we empirically validate the local $\alpha$–P{\L} property on two training losses for binary classification and dictionary learning.

\paragraph{Contributions.}
We show that the exponent $\alpha$ in the local $\alpha$-P{\L} condition determines the $\varepsilon$–dependence of the oracle/sample complexity. In particular, to output an $\varepsilon$-local optimum $\hat x$ (i.e., $F(\hat x)-l\le\varepsilon$ where $l$ is the target value of function\footnote{In the nonconvex setting, let \(l:=F(y)\) for any \(y\in\mathcal M\), where \(\mathcal M\) is a connected component of local minimizers. In the convex setting, set \(l:=\min_{y\in\mathbb R^d} F(y)\).}) on average, the optimal number of oracle calls is
\(
\Theta\big(\varepsilon^{-2/\alpha}\big)
\) for $1\le \alpha\le 2$.
This tight rate holds both a \emph{nonconvex} setting with a local convergence in a neighborhood of a connected component of minimizers, and in a \emph{convex} setting with a global convergence to an $\varepsilon$-optimal solution.

\textbf{Nonconvex setting (local convergence):} \emph{(i) Lower bound:} Under local Lipschitzness, local smoothness, and local $\alpha$-P\L\ near any connected component $\mathcal M$ of local minimizers (see \Cref{assum_PL_alpha}, Sec.~\ref{sec:local_PL}), any stochastic first-order algorithm with a batch-smooth SFO requires $\Omega\big(\varepsilon^{-2/\alpha}\big)$ oracle calls for $1\le \alpha\le2$ in order to reach an $\varepsilon$-local optimum on average, i.e., finding $\hat{x}$ such that $\mathbb{E}[F(\hat{x})]-l\le \varepsilon$. We derive the lower bound by reducing the search for an $\varepsilon$-local optimum to a sequential hypothesis testing problem with noisy observations. We subsequently establish a connection between the probability of error in the hypothesis testing problem and the complexity of finding an $\varepsilon$-local optimum point.
\emph{(ii) Upper bound:}\footnote{When $\alpha=2$, SGD with a decaying step-size attains an $\varepsilon$-local optimum \emph{in expectation} using $\mathcal{O}(\varepsilon^{-1})$ oracle calls \citep{weissmann2025almost}. Thus, for $\alpha=2$, an $\mathcal{O}(\varepsilon^{-1})$ complexity upper bound is available.} We show that the lower bound is tight by proving that a variance-reduced method (a SARAH-style scheme \citep{nguyen2017sarah}) with varying step-sizes and batch-sizes in \Cref{alg:SARAH} achieves $\mathcal{O}\big(\varepsilon^{-2/\alpha}\big)$ for $1\le \alpha<2$. The analysis establishes a high-probability ``stay-in-neighborhood'' property for the updates of the algorithm and then proves convergence of $F(x_t)$ to the level $l$.\\
Technically, localization for SARAH requires controlling non-martingale cross-terms induced by a biased gradient estimator via a coupled choice of time-varying step sizes and batch sizes.
\textbf{Convex setting (global convergence):} For convex functions, under local $(\alpha,\tau,\varepsilon)$-P\L\ property (see Assumption \ref{assum_local_PL_alpha}), we provide a lower bound $\tilde{\Omega}(\tau^{2/\alpha}\varepsilon^{-2/\alpha})$\footnote{In this paper we use $\tilde{\mathcal{O}}$ and $\tilde{\Omega}$ to hide poly-logarithmic factors.} ($1\le \alpha\le 2$) for first-order algorithms with a stochastic first-order oracle and bounded stochastic gradients\footnote{This is a standard assumption in stochastic convex non-smooth optimization \citep{xu2017stochastic,yang2018rsg}.} in order to reach an $\varepsilon$-global-optimum point. We establish this bound by a reduction to the noisy binary search (NBS) problem \citep{karp2007noisy}.
    When $\alpha\in[1,2]$, this lower bound matches the oracle complexity of accelerated stochastic subgradient methods \cite{xu2017stochastic} in terms of dependence on $\varepsilon$ and $\tau$.

To the best of our knowledge, this is the first work to study the \emph{optimal} local convergence rates of stochastic first-order algorithms under a \emph{local} $\alpha$-P{\L} condition. Related results on upper and lower bounds under (global) P{\L} assumptions are summarized in Appendix~\ref{related_work}.

\paragraph{Organization.}
The rest of the paper is organized as follows: \Cref{sec:local_PL} provides preliminaries and definitions regarding local $\alpha$-P\L{}. \Cref{sec:LB_under_local_PL} gives lower and upper bounds on the oracle complexity of stochastic first-order methods under local $\alpha$-P\L{} for $1\le \alpha<2$. The lower bound for the stochastic first-order methods under convexity and local $(\alpha,\tau,\varepsilon)$-P\L\ property is given in Section~\ref{sec_Lower bound for stochastic convex}. \Cref{sec:experiment} provides the empirical evaluation of local $\alpha$-P\L{}. Full proofs of our main results, together with additional discussion, appear in the appendices.
\section{Preliminaries}\label{sec:local_PL}
For a differentiable function \(F:\mathbb{R}^{d}\to\mathbb{R}\), define
\[
\mathcal{M}_{F} \;:=\; \big\{\,x\in\mathbb R^{d}:\ x \text{ is a local minimizer of } F\,\big\}.
\]
\begin{assumption}[Compactness]\label{assump_comp_local_min}
We assume that the collection of all local minima \(\mathcal{M}_{F}\) is contained within a compact set.
\end{assumption}
We now formalize the local geometry assumption used throughout this work.
\begin{assumption}[Local $\alpha$-P\L]\label{assum_PL_alpha}
    Let $F:\mathbb{R}^{d}\to\mathbb{R}$ be a continuously differentiable function.
Fix $\alpha\in[1,2]$. We say that $F$ satisfies the local $\alpha$-P\L{} condition if there exists a constant
$\tau>0$\footnote{We assume a single constant $\tau>0$ that works uniformly across all connected components of local minimizers. In Appendix~\ref{append_exist_univ_tau}, we show that such a universal $\tau$ exists provided each component $\mathcal M\subset\mathcal{M}_{F}$ admits its own constant $\tau_{\mathcal M}>0$.} such that for every isolated connected component $\mathcal M\subseteq \mathcal{M}_{F}$,
there exists an open neighborhood $\mathcal N(\mathcal M)$ for which
\begin{equation}\label{eq_local_PL}
F(x)-l(\mathcal M) \;\le\; \tau\,\|\nabla F(x)\|^{\alpha}\quad\forall x\in \mathcal N(\mathcal M),
\end{equation}
where $l(\mathcal M):=F(y)$ for any $y\in\mathcal M$\footnote{W.L.O.G, we assume that $F(y)>l(\mathcal{M})$ for all $y\in\mathcal{N}(\mathcal{M})\setminus \mathcal{M}$, as $\mathcal M$ is an isolated
compact connected set of local minima (otherwise we choose $\mathcal{N}(\mathcal{M})$ small enough).}.

\end{assumption}
\paragraph{Relevant ML examples of local \texorpdfstring{$\alpha$}{alpha}-P{\L} functions.}\label{sec:why-alpha-pl}
The local $\alpha$–P{\L} inequality \eqref{eq_local_PL} quantifies how ``flat'' the objective is near a target connected component $\mathcal M$ of local minimizers. 
By the Kurdyka–{\L}ojasiewicz (K{\L}) inequality, many standard ML losses admit such a \emph{local} bound; see Appendix~\ref{app:kl-examples} for a brief summary. 
As a concrete example, for a one-dimensional $d$-layer linear neural network and a single data point $(a,b)$, the squared loss
\(
F(x)\;=\;\big((\prod_{i=1}^{d}x_{i})\,a - b\big)^2
\)
satisfies a local $\alpha$–P{\L} inequality with exponent $\alpha=\tfrac{2d}{2d-1}\in(1,2)$ (see further examples in Appendix~\ref{append:gradient-dominated functions}). We further empirically validate the local $\alpha$–P{\L} inequality in \Cref{sec:experiment} for two training losses used for binary classification and dictionary learning on synthetic data.



\begin{remark}
In \Cref{assum_PL_alpha}, we impose the $\alpha$–P{\L} condition only in neighborhoods of
\emph{isolated} connected components of the set of local minimizers. Concretely, if
\(\mathcal M\) is a connected component of $\mathcal{M}_{F}$ that is isolated, then there exists an open neighborhood \(\mathcal N(\mathcal M)\) containing no other minimizer.
In particular, isolation induces a positive function–value margin (\(F(x)-l>0\) for \(x\in \mathcal N(\mathcal M)\setminus \mathcal M\)), which is key to the  ``stay-in-neighborhood'' arguments for SARAH-type updates (Appendix~\ref{append_proof_high_prob_stab}). Moreover, if \(F\in\mathcal C^2\) and every critical point outside a neighborhood of \(\mathcal M\) is a strict maximum (i.e., \(\nabla^2 F(x)\prec 0\)), then the set of local minimizers is connected and coincides with the set of global minimizers \citep[Proposition~2.1]{masiha2025superquantile} and therefore the connected set of minimizers is isolated. Empirically, connected minimizer sets are frequently observed in over-parameterized models \citep{draxler2018essentially,nguyen2019connected,kuditipudi2019explaining}; in such regimes, the isolation requirement is essentially redundant.
\end{remark}
For a set $\mathcal{S}\subset\mathbb R^{d}$ and $R>0$, define a \emph{tube} with radius $R$ around set $\mathcal{S}$ as $\mathbb B_{2}(\mathcal{S};R)\;:=\;\big\{x\in\mathbb R^{d}:\ \mathrm{dist}(x,\mathcal{S})\le R\big\},$
where $\mathrm{dist}(x,\mathcal{S}):=\inf_{y\in \mathcal{S}}\|x-y\|$.
In the following lemma, using the compactness and isolation of the set of local minimizers of $F$, we show that there exists a tube $\mathbb B_2(\mathcal M;R)\subseteq \mathcal N(\mathcal M)$ for $\mathcal N(\mathcal M)$ defined in \Cref{assum_PL_alpha}.
\begin{lemma}\label{lem_pl_imply_tube_r}
    Suppose that for an isolated connected component $\mathcal M\subseteq\mathcal{M}_{F}$, there exists an open neighborhood $\mathcal N(\mathcal M)$ on which $F$ satisfies the local $\alpha$-P{\L} inequality \eqref{eq_local_PL}, and that $\mathcal M$ is compact. Then there exists $R>0$ with
$\mathbb B_2(\mathcal M;R)\subseteq \mathcal N(\mathcal M)$.
\end{lemma}
The proof is given in Appendix~\ref{append_tube}.


\begin{remark}
Our analysis concerns the \emph{post-transient} phase (after the iterates enter a basin of attraction), where rates are determined by the geometry near minimizers. Empirically, dynamics in modern over-parameterized models are typically observed not to remain at saddles or maxima~\citep{dauphin2014saddle}. Theory supports this picture: (i) gradient descent with random initialization almost surely avoids strict saddles \citep{lee2016grad}; (ii) small perturbations enable polynomial-time escape from saddle neighborhoods \citep{jin2017pgd}; and (iii) the stochasticity of SGD behaves as a Langevin-type diffusion that helps leave saddle regions and reach basins~\citep{mandt2017sgd,raginsky2017nonconvex}. It is therefore natural to impose the $\alpha$–P{\L} condition \emph{locally} around minimizers rather than at arbitrary critical points.
\end{remark}


We assume the following local Lipschitzness and smoothness properties for the objective function $F$.
\begin{assumption}[Local Lipschitzness]\label{lip_assum}
    A function $F:\mathbb{R}^d\to\mathbb{R}$ is \emph{locally Lipschitz} if for every compact set $\mathcal{K}\subset\mathbb{R}^d$, there exists a constant $L_{\mathcal{K}}\ge0$ such that $|F(x)-F(y)| \;\le\; L_{\mathcal{K}}\,\|x-y\|$ for all $x,y\in \mathcal{K}$.
\end{assumption}
\begin{assumption}[Local Smoothness]\label{lip_grad_assum}
    A differentiable function $F:\mathbb{R}^d\to\mathbb{R}$ is \emph{locally smooth} (has a \emph{locally Lipschitz} gradient) if for every compact set $\mathcal{K}\subset\mathbb{R}^d$ there exists $\hat{L}_{\mathcal{K}}\ge0$ such that $\|\nabla F(x)-\nabla F(y)\| \;\le\; \hat{L}_{\mathcal{K}}\,\|x-y\|$ for all $x,y\in \mathcal{K}$.
\end{assumption}
\subsection{Geometric Notation and Constants}\label{sec:geom-constants}
Fix an isolated connected component $\mathcal M\subseteq\mathcal{M}_{F}$ of local minimizers, and let $l:=F(y)$ for any $y\in\mathcal M$. By \Cref{lem_pl_imply_tube_r}, choose $R>0$ such that $\mathbb B_2(\mathcal M;R)\subseteq \mathcal N(\mathcal M)$ (so the local regularity assumptions hold on $\mathbb B_2(\mathcal M;R)$). We use the nested sets
\[
\begin{aligned}
\mathcal U&:=\mathbb B_2(\mathcal M;R/2),\\
\mathcal U_0&:=\{x\in\mathcal U:\ F(x)-l\le s/2\},\\
\mathcal R&:=\mathbb B_2(\mathcal M;3R/4)\setminus \mathbb B_2(\mathcal M;R/2).
\end{aligned}
\]
Since $\mathcal M$ is compact and isolated, the closure of $\mathcal R$ is compact and disjoint from $\mathcal M$, hence continuity of $F$ implies the positive barrier $s_0:=\inf_{x\in\mathcal R}(F(x)-l)>0$. We fix any $s\in(0,s_0)$; as long as the iterates satisfy $F(x_t)-l<s$, no iterate can enter $\mathcal R$, and therefore they remain in $\mathcal U$.

\section{Upper and lower bounds on local convergence rate under local P\L}\label{sec:LB_under_local_PL}
Let $\mathcal U$ be a sufficiently small neighborhood of the target isolated connected component $\mathcal M$ of local minimizers of $F$ (i.e., $\mathcal{U}\subseteq \mathcal{N}(\mathcal{M})$) so that the local $\alpha$-P\L\ holds, and let $l := F(y)$ for any $y \in \mathcal M$. For $\varepsilon>0$, a (possibly random) output $\hat x$ is an \emph{$\varepsilon$-local optimum relative to $\mathcal M$} if
\begin{align}\label{eq_eps_local_opt}
\mathbb E\!\left[\,F(\hat x)-l \,\middle|\, \{\hat x \in \mathcal U\} \right] \le \varepsilon.
\end{align}
We call $F(\hat x)-l$ the \emph{local optimality gap} relative to $\mathcal M$. 

Let \(\mathcal E\) be the event that (i) the algorithm is initialized in \(\mathcal U\) within a neighborhood of \(\mathcal M\), and (ii) all subsequent iterates remain in \(\mathcal U\).
Our performance metric is the \emph{expected number of oracle queries} required to produce an output \(\hat x\), evaluated \emph{conditional on} \(\mathcal E\).
We later show that for \Cref{alg:SARAH}, if the initialization is sufficiently close to \(\mathcal M\), then \(\mathcal E\) holds with high probability and the returned \(\hat x\) is an \(\varepsilon\)-local optimum relative to \(\mathcal M\).

\subsection{Problem setting}\label{setup_noncvx}
\noindent\textbf{Function class.}
For $\alpha\in[1,2]$, define
\begin{align}\label{func_class}
    \resizebox{0.9\linewidth}{!}{$\mathcal{F}_{\alpha}=\left\{\begin{array}{l}
        \!F:\mathbb{R}^{d}\to \mathbb{R}
    \end{array}\Bigg|\begin{array}{cc}
        F\text{\rm{ is locally Lipschitz,}}\\
         F\text{\rm{ is locally smooth,}}  \\
         F \text{\rm{ is local $\alpha$-P\L,}}\\
         \mathcal{M}_{F} \text{\rm{ is compact}}    \end{array}
    \right\}.$}
\end{align}
Equivalently, $F\in\mathcal{F}_{\alpha}$ if and only if \Cref{assump_comp_local_min,lip_assum,lip_grad_assum,assum_PL_alpha} hold.

\noindent
\noindent\textbf{Batch-smooth stochastic first-order oracle.}
We access a stochastic oracle specified by a measurable map $\gv:\mathbb{R}^d\times\mathcal Z\to\mathbb{R}^d$ and a distribution $P_Z$ on $\mathcal Z$ such that, for all $x,y\in\mathbb{R}^d$, $\mathbb E_{Z\sim P_Z}\!\big[\gv(x,Z)\big] = \nabla F(x),$
\begin{align}
&\mathbb E_{Z\sim P_Z}\!\big[\|\gv(x,Z)-\nabla F(x)\|^2\big] \le \sigma^2, 
\label{eq_stoch_1st_order_oracle}
\\
&\mathbb E_{Z\sim P_Z}\!\big[\|\gv(x,Z)-\gv(y,Z)\|^2\big] \le \tilde L^{\,2}\,\|x-y\|^2.
\label{eq-L-avg-smooth}
\end{align}
The $\tilde{L}$-average smoothness \eqref{eq-L-avg-smooth} is a standard assumption in variance-reduction analyses \citep{fang2018spider,lei2017non,cutkosky2019momentum}. The family of such oracles is denoted by $\mathsf O^{\tilde L}_{\sigma}$.

A \emph{batch} query at points $x^{(1)},\dots,x^{(K)}$ draws a single $Z\sim P_Z$ and returns shared-noise gradients; for every $O\in\mathsf O^{\tilde L}_{\sigma}$,
\begin{equation}\label{oracle_class}
O\big(x^{(1)},\ldots,x^{(K)}\big)
=
\big(\gv(x^{(1)},Z),\ldots,\gv(x^{(K)},Z)\big),
\end{equation}
where $\gv$ satisfies \eqref{eq_stoch_1st_order_oracle}–\eqref{eq-L-avg-smooth}. 

\noindent
\textbf{First-order optimization algorithm.}
A stochastic first-order algorithm $\mathsf A$ initialized at $x_0$ proceeds in rounds $t=1,2,\ldots$. At round $t$, it (possibly adaptively) chooses a batch size $K_t$ and a set of query points $x_t := \big(x_t^{(1)},\ldots,x_t^{(K_t)}\big)\in(\mathbb{R}^d)^{K_t},$
then receives noisy gradients from the batch oracle $O$ in \eqref{oracle_class}. Formally, for $t\ge 1$,
\[
\resizebox{\linewidth}{!}{$x_t \;=\; \mathsf A_t\!\left(
  O\big(x_1^{(1)},\ldots,x_1^{(K_1)}\big),\ \ldots,\ 
  O\big(x_{t-1}^{(1)},\ldots,x_{t-1}^{(K_{t-1})}\big)
\right),$}
\]
where $\mathsf A_t$ is a measurable policy that may depend on $\{x_{0},x_1,\ldots,x_{t-1}\}$ and the algorithm’s internal randomness. We write $\mathcal A_T$ for the class of algorithms that perform at most $T$ oracle rounds.
\begin{assumption}[$(1-\delta)$-stay-in-neighborhood]\label{propert_local_cvg_algor}
Let $F\in\mathcal F_\alpha$ and let $\mathcal M_{F}$ be the set of its local minimizers.
Fix an isolated connected component $\mathcal M\subset \mathcal M_{F}$, a horizon $T\in\mathbb N$, and a $\delta\in(0,1)$.
An algorithm $\mathsf A\in\mathcal A_T$ satisfies ``$(1-\delta)$-stay-in-neighborhood'' assumption on $\mathcal M$ 
if there exist neighborhoods\footnote{A convenient special case is the ``tubular'' choice $\mathcal{U}_0=\mathbb B_2(\mathcal M;r_0)$ and
$\mathcal{U}=\mathbb B_2(\mathcal M;r_1)$ with $0<r_0\le r_1$.} $\mathcal{U}_0$ and $\mathcal{U}$ where $\mathcal{U}_0\subseteq \mathcal{U}$ and $\mathcal M\subset \mathcal{U}_0\subseteq \mathcal{U}$ such that, for any initialization $x_0\in \mathcal{U}_0$, the algorithm’s iterates satisfy
\[
\mathbb P\!\left(x_t\in \mathcal{U}\ \text{for all }t=1,\ldots,T\right)\ \ge\ 1-\delta,
\]
where the probability is taken over the internal randomness of the algorithm and the oracle.
\end{assumption}
\Cref{propert_local_cvg_algor} is natural: empirically, training dynamics in modern ML often enter a basin and thereafter behave \emph{locally}; see, e.g., \cite{li2018visualizing,garipov2018modeconnectivity,draxler2018essentially}. 
In our lower-bound analysis, we restrict our attention to algorithms that satisfy \Cref{propert_local_cvg_algor} on $(\mathcal{U}_0,\mathcal{U})$; equivalently,
for fixed $(\mathcal M,T,\delta)$, we define
\begin{align}\label{eq_def_alg_stay_in_negh}
    &\mathcal{A}_T(\mathcal{U}_0, \mathcal{U},\delta)
:=\nonumber\\
&\resizebox{0.9\linewidth}{!}{$\Bigl\{\mathsf{A}\in \mathcal{A_T}: \forall x_0\in \mathcal{U}_0, 
\mathbb P\bigl(x_t\in \mathcal{U}, \forall t\le T\bigr)\ge 1-\delta\Bigr\}.$}
\end{align}
In the upper-bound analysis, we show that a SARAH-type method (see \Cref{alg:SARAH}) indeed satisfies this assumption.


\subsection{Complexity lower bound}\label{sec:3.2}
The main result of this section is stated in the following theorem.


\begin{thm}\label{lower_bound_non-convex_PL_L-avg-smooth}
Fix $\alpha\in(1,2]$. There exist (i) an instance $(F,O)$ with $F\in\mathcal F_\alpha$ and a batch-smooth SFO $O\in\mathsf O^{\tilde L}_{\sigma}$, (ii) a connected component $\mathcal M\subseteq\mathcal M_F$ of local minimizers with level $l:=F(y)$ for $y\in\mathcal M$, and (iii) constants $\tau>0$, $R>0$, together with an open neighborhood $\mathcal N(\mathcal M)\supseteq \mathbb B_2(\mathcal M;R)$ on which the local $\alpha$–P{\L} condition holds with parameter $\tau$, such that the following holds.

For any first–order algorithm $\mathsf A$ that (i) is initialized at $x_0\in\mathcal N(\mathcal M)$ and (ii) satisfies the stay–in–neighborhood property up to the iteration horizon $T$ with $\mathcal U_0\subseteq \mathcal U\subseteq \mathcal N(\mathcal M)$ (i.e., $\mathsf A\in\mathcal A_T(\mathcal U_0,\mathcal U,\delta)$), the number of oracle queries $m$ required to output $\hat x$ with $\mathbb{E}\!\left[F(\hat x)-l\ \middle|\ \mathcal E_{T}(\mathcal U)\right]\ \le\ \varepsilon$, must satisfy $m=\Omega(\tau^{2/\alpha}\sigma^{2}\epsilon^{-2/\alpha})$ for $\varepsilon\le R^{\alpha}\tau^{1/(\alpha-1)}(\tfrac{\alpha-1}{\alpha})^{\frac{\alpha}{\alpha-1}}$ where $\mathcal E_{T}(\mathcal U):=\{x_k\in\mathcal U\ \text{for all iterates }k\le T\}$.
\end{thm}
\begin{proof}[Sketch of proof]
We follow a standard information–theoretic reduction for minimax lower bounds \citep{raginsky2009information}. The idea is to cast optimization as a noisy binary hypothesis test \citep[Ch.~2]{10.5555/1522486} between two objective functions that are statistically hard to distinguish.\\
We construct two one–dimensional $\mathcal{C}^1$ functions that (i) satisfy local Lipschitzness, local smoothness, and a local $\alpha$–P{\L} condition; (ii) have unique minimizers separated by a distance $\rho$; and (iii) coincide outside a small neighborhood. Under a noisy gradient oracle with variance $\sigma^2$, the two functions induce statistically close distributions: choosing
\(\rho \asymp \sigma^{\alpha-1}/{m^{(\alpha-1)/2}}\)
ensures the Kullback–Leibler divergence between the distributions of $m$ oracle responses is $\mathcal{O}(1)$. Therefore, by Fano’s inequality, any algorithm has a constant probability of outputting a point at least $\rho/2$ distance away from the true minimizer. Using the local error bound implied by the local $\alpha$–P{\L} condition, this distance lower bound translates into a function–value suboptimality (the local optimality gap defined in \eqref{eq_eps_local_opt}) of order \(\frac{\tau\sigma^\alpha}{m^{\alpha/2}}\).
Hence the expected local optimality gap is $\Omega\big(\tau\sigma^\alpha m^{-\alpha/2}\big)$, yielding the claimed complexity lower bound.
\end{proof}

\begin{remark}\label{remark_alpha=1_lower_bound}
The case $\alpha=1$ is not included in \Cref{lower_bound_non-convex_PL_L-avg-smooth}, as it follows from known results.
In particular, \cite{foster2019complexity} proved that, under convexity and smoothness, any stochastic first-order method needs  $\Omega(\varepsilon^{-2})$ oracle calls to find an $\varepsilon$-stationary point in the sense that  $\mathbb{E}[\|\nabla F(x)\|] \le \varepsilon$. 
 In \Cref{proof_remark_alpha=1_lower_bound}, we showed that their hard instance function belongs to our class $\mathcal{F}_{\alpha=1}$: its set of stationary points coincides with the 
set of global minimizers, and it satisfies the local $1$-P\L{}, Lipschitz, and smoothness properties. Moreover, the stochastic gradients  in their construction can be realized by an oracle $O \in \mathsf{O}_{\sigma}^{\tilde{L}}$.  
 Therefore, when $\alpha=1$, their lower bound of $\Omega(\varepsilon^{-2})$ holds in the setting considered in this section.
\end{remark}

\subsection{Complexity upper bound}\label{sec_upper_oracle_complexity}

We introduce \Cref{alg:SARAH}, a SARAH–type variance–reduced method with \emph{time-varying}  batch-sizes and step-sizes. Define the mini-batch estimator
\(
\gv_{\mathcal J}(x)\;:=\;\frac{1}{|\mathcal J|}\sum_{j\in\mathcal J}\gv(x,\xi_j),
\)
where \(\{\xi_j\}_{j\in\mathcal J}\) are i.i.d.\ samples and \(\mathbb{E}[\gv(x,\xi)]=\nabla F(x)\).
The SARAH gradient update is given in line~4 of \Cref{alg:SARAH}. When \Cref{alg:SARAH} is initialized in a small neighborhood of a fixed connected component $\mathcal M$ of local minimizers, the method attains the optimal sample complexity $\Theta(\varepsilon^{-2/\alpha})$ for obtaining an $\varepsilon$-local minimum point relative to $\mathcal{M}$, matching the lower bound in \Cref{lower_bound_non-convex_PL_L-avg-smooth}.

A key technical step is to show that, under a local $\alpha$-P{\L} condition and a batch-smooth oracle, the iterates of \Cref{alg:SARAH} remain with high probability within a sufficiently small neighborhood of the target component $\mathcal M$. Even in the case that $\mathcal{M}=\{x_{\mathrm{loc}}\}$ is a singleton local minimizer, this is a challenging task to keep updates of \Cref{alg:SARAH} $\{x_{t}\}_{t\ge1}$ close to $x_{\mathrm{loc}}$ \emph{without} convexity. Our time-varying batch-sizes and step-sizes ensure small per-iteration steps and control the distance between two consecutive updates; together with the local $\alpha$-P{\L} geometry, this prevents escape from the neighborhood.


Conditioned on the event that the entire trajectory remains in a neighborhood of $\mathcal M$, we establish \emph{function-value} convergence to the level of $\mathcal M$:
\(\lim_{t\to\infty}\mathbb{E}[F(x_t)] = l,\)
together with non-asymptotic bounds on $\mathbb{E}[F(x_t)]-l$ along the run. Since a local $\alpha$-P{\L} condition does not enforce convexity and $\mathcal M$ may be a manifold of minimizers, one should not expect $x_t$ to converge to a single point in $\mathcal M$. In this case, function-value convergence is a natural notion of convergence.\footnote{The function-value convergence is a symmetry-invariant target and does not depend on choosing a representative minimizer. See \citep{liu2023revisiting} for more details.}
Our main theorem, stated next, shows that \Cref{alg:SARAH} indeed achieves the tight upper bound under local $\alpha$-P{\L}.
\begin{algorithm}[t]
\caption{SARAH}\label{alg:SARAH}
\textbf{Input:} Maximum number of iterations $T$, batch sizes $\{n^{t}_{g}\}_{t=1}^{T}$, and the period length $S$, initial point $x_{0}$.
\begin{algorithmic}[1]
\STATE $t\gets 0$
\WHILE{$t\le T-1$}
\STATE Sample index set $\mathcal{J}_{t}$ with $|\mathcal{J}_{t}|=n^{t}_{g}$.
\STATE \[
\resizebox{\linewidth}{!}{$\vv_{t} \gets \begin{cases}
\gv_{\mathcal{J}_{t}}(x_{t}),\quad &\text{mod}(t,S)=0\\
\gv_{\mathcal{J}_{t}}(x_{t})-\gv_{\mathcal{J}_{t}}(x_{t-1})+\vv_{t-1},\quad&\text{else}
\end{cases}$}
\]
\STATE $x_{t+1}\gets x_{t}-\eta_{t}\vv_{t}$
\STATE $t \gets t+1$
\ENDWHILE
\RETURN{$x_{T}$}
\end{algorithmic}
\label{algorithm2}
\end{algorithm}
\begin{thm}\label{thm:cvg_SARAH}
Fix $\delta\in(0,1)$ and let $\mathcal M$ be an isolated connected component of local minima of $F$ with level $l=F(y)$ for all $y\in\mathcal M$. 
Assume $F\in\mathcal F_\alpha$ and choose $R>0$ so that $\mathbb B_{2}(\mathcal M;R)\subseteq\mathcal N(\mathcal M)$ (cf.\ \Cref{assum_PL_alpha}). 
Let the oracle be batch-smooth, $O\in\mathsf O^{\tilde L}_{\sigma}$, and run \Cref{alg:SARAH} from $x_0$ with step sizes $\eta_t=\eta_0 (t+1)^{-\frac{\alpha}{2}-(2-\alpha)\mathsf{x}}$ and batch-sizes $n_g^t=(t+1)^{1-2\mathsf{x}}$, for an arbitrary small value $\mathsf{x}>0$.
Denote the iterates by $\{x_t\}_{t\ge1}$ and set $\hat{x}:=x_T$. The following statements hold. 
\begin{itemize}
    \item There exists $s>0$ such that, with \(
\mathcal U:=\bigl\{x:\mathrm{dist}(x,\mathcal M)\le R/2\bigr\},\)
\(\mathcal U_0:=\bigl\{x:\mathrm{dist}(x,\mathcal M)<R/2,\ F(x)-l\le s/2\bigr\},\) and $\eta_{0}=\mathcal{O}(\min\{\sqrt{s},s^{-1}\delta,R\sqrt{\delta}\})$, if $x_0\in\mathcal U_0$, the event \(\mathcal{E}_T(\mathcal{U}):=\{\ x_t\in\mathcal U\ \text{for all }t=1,\dots,T\ \}\) occurs with probability at least $1-\delta$.
\item Let $N:=\mathbb E\!\left[\sum_{t=1}^{T} n_g^t\mid \mathcal{E}_T(\mathcal{U})\right]$ denote the expected total number of oracle queries used up to iteration $T$ given $\mathcal{E}_T(\mathcal{U})$ defined in the first statement. 
Then for $\varepsilon\le s/2$, $\mathbb E\!\left[F(\hat{x})-l\mid \mathcal{E}_T(\mathcal{U})\right]\le \varepsilon$, with \(N\ =\ \mathcal O\big(\varepsilon^{-2/\alpha}\big)\).
\end{itemize}
\end{thm}
\begin{remark}[On tuning and batch sizes]
The $\alpha$-dependent schedules in \Cref{thm:cvg_SARAH} are used to obtain the rate-optimal $\varepsilon$-dependence; if $\alpha$ is unknown, one may use a generic polynomially decaying step size (yielding a suboptimal worst-case rate under the same local assumptions). The increasing mini-batch sizes are likewise used to match the optimal oracle complexity; designing a localized method with comparable guarantees using single-sample (recursive-momentum) gradients is an interesting direction for future work.
\end{remark}
\begin{proof}[Proof sketch]
We show that (i) the iterates \emph{stay} near the target set of minimizers $\mathcal M$ with high probability, and (ii) conditioning on this event, the function value gap $D_t:=F(x_t)-l$ decays at the optimal rate under the local $\alpha$-P{\L}.\\
\noindent\textit{(i) High–probability stay-in-neighborhood of $\mathcal M$.}
Let $R>0$, the neighborhoods $\mathcal U_0\subset\mathcal U$, the forbidden annulus $\mathcal R$, and the barrier levels $0<s<s_0$ be as in \Cref{sec:geom-constants}. Starting from $x_0\in\mathcal U_0$, we ensure (a) that updates are small so the iterates cannot jump across $\mathcal R$, and (b) that the function gaps stay below $s$ so the barrier prevents entrance into $\mathcal R$.
To ensure the event $\mathcal{E}_T(\mathcal{U})=\{x_{t}\in\mathcal{U}\text{ for all $t=1,...,T$}\}$ happens with high probability, we control the growth of the following two events by appropriate choice of our time-varying step/batch sizes. 

\emph{(1) Small–error event $\mathcal E_{\mathrm{err}}$.} 
Following Appendix~\ref{append_proof_high_prob_stab}, we collect all stochastic terms in the descent recursion into an accumulator $\mathsf{R}_n$ (a ``noise budget''). Under our decaying step sizes and growing batch sizes, its expected increments are summable, which yields $\mathbb P(\mathcal E_{\mathrm{err}})\ge 1-\delta/2$ for $\mathcal E_{\mathrm{err}}:=\{\mathsf{R}_k<s,\ \forall k\le T\}$. In turn, $\mathcal E_{\mathrm{err}}$ implies $D_t<s$ for all $t\le T$.

\emph{(2) Small–steps event $\mathcal E_{\mathrm{step}}$.}
Because SARAH estimator (line 4 in \Cref{alg:SARAH}) is bounded on the neighborhood and step-sizes are tuned, each update is small: $\|x_{t}-x_{t-1}\|\le R/4$ for $t\ge1$. Thus no iterate can escape from $\mathbb{B}_{2}(\mathcal{M};3R/4)$ in a single step. We ensure this holds with probability at least $1-\delta/2$.

By a union bound, both events occur simultaneously with probability at least $1-\delta$.
On $\mathcal E_{\mathrm{err}}\cap\mathcal E_{\mathrm{step}}$, the preceding argument prevents the iterates from entering $\mathcal{R}$, hence $\mathcal{E}_T(\mathcal{U})$ holds with probability at least $1-\delta$.


\noindent\textit{(ii) Optimal complexity inside the neighborhood.}
Conditioned on staying in $\mathcal U$ (which holds with probability $\ge 1-\delta$), the same local smoothness and $\alpha$-P{\L} geometry imply the following recursion inequality for $\delta_{t}:=\mathbb{E}[F(x_{t})\mid \mathcal{E}_T(\mathcal{U})]-l$:
\begin{align}\label{eq_0009090}
    \resizebox{0.9\linewidth}{!}{$\delta_{t+1}\le \delta_{t}-\frac{\eta_{t}}{2\tau^{\frac{2}{\alpha}}}\delta_{t}^{\frac{2}{\alpha}}+\frac{\eta_{t}}{2}\mathbb{E}[\|\vv_{t}-\nabla F(x_{t})\|^{2}\mid \mathcal{E}_T(\mathcal{U})].$}
\end{align}    
The first term is a geometric drift induced by local $\alpha$-P{\L}, and the last term is the (decaying) noise level of the SARAH estimator; solving this nonlinear recursion amounts to balancing these two effects (see \Cref{lemma:rec_eq} in Appendix~\ref{append:G3}).
Using the time–varying batch sizes in \Cref{thm:cvg_SARAH} and the SARAH-update of gradients, we ensure that
$
\mathbb{E}[\|\vv_t-\nabla F(x_t)\|^{2}\mid \mathcal{E}_T(\mathcal{U})]\;=\;\mathcal{O}\!\left(\tfrac{1}{t}\right).
$
Together with the step sizes from \Cref{thm:cvg_SARAH}, the recursion in \eqref{eq_0009090} yields $\delta_t \;=\; \mathcal{O}\!\left(t^{-{\alpha}/{2}+\alpha\mathsf{x}}\right)$.
Therefore, to achieve $\delta_t=\mathcal{O}(\varepsilon)$, the total number of stochastic gradient calls (i.e., the sum of all mini–batch sizes) satisfies, on average, $\sum_{t=0}^{n-1} n_g^{\,t}\;=\;\mathcal{O}\!\bigl(\varepsilon^{-2/\alpha}\bigr)$. Consequently, the method attains the \emph{optimal} oracle complexity $\Theta\!\bigl(\varepsilon^{-2/\alpha}\bigr)$, matching the lower bound in \Cref{lower_bound_non-convex_PL_L-avg-smooth}.
\end{proof}
\begin{remark}
Our high–probability stability argument follows the localization scheme used for SGD in \citep{weissmann2025almost,mertikopoulos2020almost} which proves that, once the iterate enters a small basin around a target component $\mathcal M$, it stays there with high probability. However, the proof for \Cref{alg:SARAH} differs in two essential ways: (i) \textit{Different descent dynamics.} For SGD with an \emph{unbiased} gradient estimator $\gv_t$,
the descent inequality features a variance term scaled by $\eta_t^{2}$, of the form $\eta_t^{2}\,\E\!\left[\|\gv_t-\nabla F(x_t)\|^{2}\right]$.
For SARAH, the recursion involves the variance of the \emph{biased} estimator $\vv_t$ with a
\emph{linear} step-size factor, namely $\eta_t\,\E\!\left[\|\vv_t-\nabla F(x_t)\|^{2}\right]$.
The linear (rather than quadratic) scaling makes the stochastic error more difficult to control; noise decays more slowly and requires tighter coupling between step-size and batch-size (see \Cref{lemma_recursion,lemma:rec_eq} in Appendix~\ref{append:G3}). (ii) \textit{Non–martingale cross term.} In the SGD analysis, the inner-product ``cross term'' $\langle \nabla F(x_t),\, \gv_t - \nabla F(x_t)\rangle$ forms a martingale difference with zero conditional mean; its weighted sum is controlled directly from this zero-mean property.
In contrast, SARAH’s estimator $\vv_t$ is \emph{biased}, hence $\langle \nabla F(x_t),\, \vv_t - \nabla F(x_t) \rangle$ is not a martingale difference. Our analysis instead controls the \emph{weighted accumulation} of cross terms
\(
\sum_{t=0}^{T-1} \eta_t\,\big\langle \nabla F(x_t),\, \vv_t - \nabla F(x_t) \big\rangle,
\)
by using the SARAH variance–reduction recursion with carefully chosen,
time–varying step sizes $\{\eta_t\}$ and batch sizes $\{n_g^t\}$. This
lets us bound simultaneously the bias–variance aggregate
\(
\sum_{t=0}^{T-1} \eta_t\,\|\vv_t - \nabla F(x_t)\|^2
\quad\text{and}\quad
\sum_{t=0}^{T-1} \eta_t\,\big\langle \nabla F(x_t),\, \vv_t - \nabla F(x_t) \big\rangle,\)
so that both remain uniformly small with high probability on the ``small–error'' event.
This replaces the martingale step used in SGD and is the key technical difference in our localization analysis; see \Cref{lemm_13_append} in Appendix~\ref{append_proof_high_prob_stab} for the precise high–probability bound.
\end{remark}

\section{Lower bound on global convergence rate under local P\L\ and convexity}\label{sec_Lower bound for stochastic convex}
\vspace{-0.1cm}
In this section, we consider the problem of finding an $\varepsilon$-global-optimum point when the objective function $F:\mathcal{X}\to\mathbb{R}$ is convex and satisfies the local $(\alpha,\tau,\varepsilon)$-P\L\ property (refer to Assumption~\ref{assum_local_PL_alpha}).  Our goal is to find a point $\hat{x}\in \mathcal{X}$ such that $F(\hat{x})-\min_{x\in \mathcal{X}}F(x)\le \varepsilon$
with probability at least $1-\delta$, with access to $F$ through a stochastic first-order oracle with bounded stochastic gradients.

We first summarize the setting we use to establish the complexity lower bound.

\noindent\textbf{Function class.}
\begin{assumption}[Local $(\alpha,\tau,\varepsilon)$-P\L]\label{assum_local_PL_alpha}
Function $F:\mathcal{X}\to \mathbb{R}$ (where $\mathcal{X}\subseteq \mathbb{R}^{d}$) satisfies the local $(\alpha,\tau,\varepsilon)$-P\L\ property when for all $x\in \mathcal{X}\cap\mathcal{S}_{\varepsilon}$, we have
\begin{equation*}
    F(x)-\min_{x\in \mathcal{X}}F(x)\leq \tau \|\nabla F(x)\|^{\alpha},
\end{equation*}
where $\mathcal{S}_{\varepsilon}:=\{x:~F(x)-\min_{x\in \mathcal{X}}F(x)\le \varepsilon\}$, $\tau>0$, and $\alpha\in [1,2]$ are two constants.
\end{assumption}
$\mathcal{F}^{\mathcal{X}}_{\alpha,\tau,\varepsilon}$  includes all convex functions that satisfy Assumptions \ref{assum_local_PL_alpha}, i.e.,
{\small
\begin{align}\label{func_class_convex_PL}
    \mathcal{F}^{\mathcal{X}}_{\alpha,\tau,\varepsilon}=\left\{\begin{array}{l}
        F:\mathcal{X}\to \mathbb{R}\\
        \mathcal{X}\subset \mathbb{R}^{d}
    \end{array}\Bigg|\begin{array}{cc}
         F\text{\rm{ is convex,}}\\
         F \text{\rm{ satisfies $(\alpha,\tau,\varepsilon)$-P\L}}
    \end{array}
    \right\}.
\end{align}
}

\noindent
\textbf{Domain class.} Denote by $\mathbb{S}_{R}$, the class of  convex, closed, and bounded sets in $\mathbb{R}^{d}$ whose diameter $\text{diam}(\mathcal{X})\le R$ for every $\mathcal{X}\in \mathbb{S}_{R}$.

\noindent
\textbf{Stochastic first-order oracle with bounded stochastic gradients.}
We denote a family of stochastic first-order oracles satisfying the following properties by $\mathsf{O}^{G}$: (i) satisfying the conditions in  \eqref{eq_stoch_1st_order_oracle},
and (ii) bounded stochastic gradients, i.e., $\|\gv(x,z)\|\le G$
 for every $x\in \mathcal{X}$ and $z\in\mathcal{Z}$ where $G>0$ is some constant.

We now provide a tight lower bound for the probability-based minimax oracle complexity of the function class $\mathcal{F}^{\mathcal{X}}_{\alpha,\tau,\varepsilon}$ and the family of oracles $\mathsf{O}^{G}$ for stochastic projected first-order methods.
\begin{thm}\label{lower_bound_PL_convex_NBS}
For the family of domain sets $\mathbb{S}_{R}$, there exists a function $F$ in the function class $\mathcal{F}^{\mathcal{X}}_{\alpha,\tau,\varepsilon}$, an oracle $O$ in the family of oracles $\mathsf{O}^{G}$, and $\alpha\in(1,2]$ and $\varepsilon\le \min\{((\alpha-1)/{\alpha})^{\alpha}\tau,1\}$, such that the number of oracle queries for any stochastic first-order algorithm $\mathcal{A}\in\mathcal{A}_{m}$ required to output an estimate $\hat{x}$ satisfying $F(\hat{x})-F^{*}\le \varepsilon$ with probability at least $1-\delta$ is
\begin{align}\label{eq_lb_cvx_local_pl}
   \Omega\left(G^{2}\tau^{\frac{2}{\alpha}}\varepsilon^{-\frac{2}{\alpha}}\cdot\log\left(\frac{2\alpha R}{(\alpha-1)\varepsilon^{\frac{\alpha-1}{\alpha}}\tau^{\frac{1}{\alpha}}}\right)\right).
\end{align}
\end{thm}
\begin{proof}[Sketch of proof]
We reduce from noisy binary search problem (NBS): given a sorted set of $N$ keys and a comparison oracle that is correct with probability $1/2+p$, any (adaptive) strategy needs $\Omega(p^{-2}\log N)$ queries \citep{feige1994computing,karp2007noisy}.
Let $X=[0,R]$ and partition it into $N$ equal intervals $[a_j,a_{j+1})$. The unknown target interval index $j^{*}$
plays the role of the hidden position in NBS.
From the NBS comparison outcomes we define a stochastic gradient oracle $g(x,Z)$ that is (i) unbiased for the gradient of a convex function $F$,
and (ii) uniformly bounded $\|g(x,Z)\|\le G$. The construction ensures that on the $\varepsilon$-sublevel set $S_\varepsilon$, $F(x)-F^{*} \ \le\ \tau\,\|\nabla F(x)\|^{\alpha}$
and that any $\varepsilon$-minimizer must lie in the unique target interval $[a_{j^{*}},a_{j^{*}+1})$,
so solving the optimization problem to accuracy $\varepsilon$ identifies $j^{*}$. We set $p\ \asymp\ \tfrac{\varepsilon^{1/\alpha}}{G\,\tau^{1/\alpha}}$ and $N\ \asymp\ \tfrac{2\alpha}{\alpha-1}\cdot \tfrac{R}{\varepsilon^{(\alpha-1)/\alpha}\,\tau^{1/\alpha}},$
so the gradient signal on $S_\varepsilon$ matches the comparison reliability $p$, and $\varepsilon$-accuracy is equivalent to interval identification.
If a stochastic first-order algorithm uses $T$ oracle calls and outputs $\hat x$ with $F(\hat x)-F^{*}\le\varepsilon$, then it can
solve the NBS instance with at most a constant-factor more queries and the same success probability. Hence substituting our choices of \(p\) and \(N\) into the lower bound $\Omega(p^{-2}\log N)$ gives the result (for $\alpha\in(1,2]$ and suitably small $\varepsilon$ so the construction lies in the stated class).
\end{proof}
\begin{remark}
    Note that every convex function satisfies the local $(\alpha=1,\tau,\varepsilon)$-P\L\ property. It is well-known that for  bounded domain convex functions, stochastic first-order methods achieve a tight lower bound of $\Omega(\varepsilon^{-2})$ with access to $\mathsf{O}^{G}$ oracle \citep{nemirovskij1983problem,agarwal2009information}. Therefore, similarly to Remark \ref{remark_alpha=1_lower_bound}, we did not include the case $\alpha=1$ in the statement of Theorem \ref{lower_bound_PL_convex_NBS}.
\end{remark}

\begin{remark}[Complexity upper bound]
In \cite[Theorem 1]{xu2017stochastic}, the authors showed that for function $F\in \mathcal{F}^{\mathcal{X}}_{\alpha,\tau,\varepsilon}$ and oracle class $\mathsf{O}^{G}$, a constrained version of the Accelerated Stochastic Subgradient Method (see Algorithm 1 in \citep{xu2017stochastic}) guarantees that $F(x_{T})-\min_{x\in \mathcal{X}}F(x)\le \varepsilon$ with probability $1-\delta$, for some $\delta>0$, and $T=\mathcal{O}\left({G^{2}\tau^{{2}/{\alpha}}\cdot\log(1/\delta)\cdot\log\left({\varepsilon^{-(\alpha-1)/{\alpha}}\tau^{-{1}/{\alpha}}}\right)}/{\varepsilon^{{2}/{\alpha}}}\right)$ which matches with our lower bound in \eqref{eq_lb_cvx_local_pl} in terms of dependency on $\varepsilon$, $\tau$, and $G$.    
\end{remark}
\section{Empirical evaluation of local \texorpdfstring{$\alpha$–P\L{}}{alpha–PL}}\label{sec:experiment}
In this section, we numerically study the local $\alpha$–P\L{} condition in two learning tasks: (i) classification with a ReLU network, and (ii) dictionary learning. We also consider real-data versions of these experiments in Appendix~\ref{append_experiment} and additional learning tasks (e.g., polynomial and deep linear matrix factorization) in Appendix~\ref{app:mf_experiments}; in some setups, the estimated envelope at $\alpha=2$ explodes under tighter localization, suggesting that $\tau(\alpha)$ may be unbounded at $\alpha=2$. Let $\hat{x}$ be the last iterate returned by full-batch gradient descent. In our experiments, we consider the runs for which $\|\nabla F(\hat{x})\|$ is small, so $\hat{x}$ is an \emph{almost critical point}.  We consider a small ball $\mathbb{B}_2(\hat{x};R)$ and compute, for $\alpha\in[1,2]$,
\(
\mathsf{R}_\alpha(x)\;:=(F(x)-F(\hat{x}))\|\nabla F(x)\|^{-\alpha}.
\)
 Let $\tau(\alpha)\;:=\;\sup_{x\in \mathbb{B}_2(\hat{x};R)} \mathsf{R}_\alpha(x)$.
In both tasks (classification and dictionary learning), we observe that $\tau(\alpha)$ is finite on $\mathbb{B}_2(\hat{x};R)$ for all $\alpha\in[1,2]$ (see Figure \ref{fig:placeholder}).
Since $\tau(\alpha)$ is evaluated on a fixed-radius neighborhood, this does not by itself prove that $\tau(\alpha)\to\infty$ as $\alpha\to2$; however, we do observe a sharp growth of $\tau(\alpha)$ as $\alpha$ approaches $2$ in both experiments.
Therefore, for any $\alpha\in[1,2]$, by picking $\tau=\tau(\alpha)$ in \Cref{assum_PL_alpha} and $\mathcal{N}(\hat{x})=\mathbb{B}_2(\hat{x};R)$, the local $\alpha$–P\L{} condition is satisfied.
\begin{figure}
    \centering
    \includegraphics[width=0.75\linewidth]{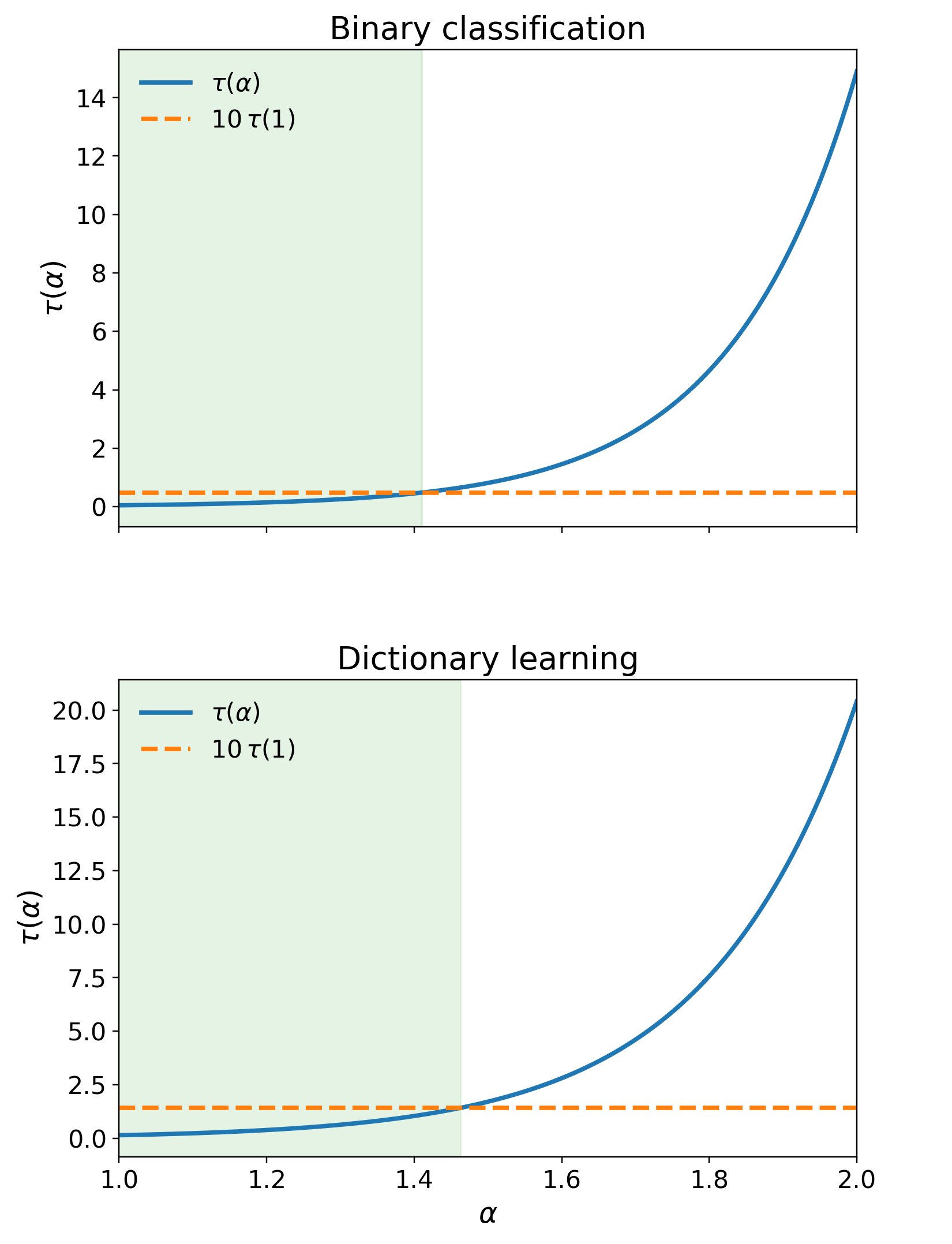}
    \caption{$\tau(\alpha)$ versus $\alpha\in[1,2]$ on the ball $\mathbb{B}_2(\hat{x};\,R=0.1)$. The green region highlights $\{\alpha:\tau(\alpha)\le 10\tau(1)\}$, illustrating a near-flat regime around $\alpha=1$ and the sharp growth as $\alpha\uparrow 2$.}
    \label{fig:placeholder}
\end{figure}
The complexity upper bound scales polynomially with \(\tau(\alpha)\) and depends on \(\varepsilon\) as \(\varepsilon^{-2/\alpha}\). Therefore, as \(\alpha\) increases, the term depending on \(\tau(\alpha)\) increases while the \(\varepsilon^{-2/\alpha}\) term decreases. Since \(\tau(\alpha)\) is finite over \(\alpha\in[1,2]\), for sufficiently small \(\varepsilon\), the optimal pair \((\tau(\alpha),\alpha)\) for the tightest upper bound lies near \(\alpha=2\). If \(\varepsilon\) is not sufficiently small, however, the optimal \(\alpha\) moves towards \(\alpha=1\). These curves therefore provide guidance on how, for a given \(\varepsilon\), to set \(\alpha\) in the learning rate of Algorithm~\ref{alg:SARAH} to achieve the tightest upper bound on the expected number of oracle queries. Further experimental details for these two tasks are provided in Appendix~\ref{append_experiment}, and additional empirical results are reported in Appendix~\ref{app:mf_experiments}.

\paragraph{Limitations and future work.}
Our guarantees are local around an isolated connected component of minimizers and assume initialization in a neighborhood where the local $\alpha$--P{\L} condition holds. The constants (e.g., the neighborhood radius and $\tau$) are problem-dependent, and achieving the optimal rate relies on schedules that depend on (or estimate) $\alpha$ and may use increasing mini-batches. Extending these results to more global guarantees and to batch-free variance-reduced updates are natural directions for future work.

\phantomsection\label{page:refs_start}
\bibliographystyle{abbrvnat}
\bibliography{references}

\section*{Checklist}

The checklist follows the references. For each question, choose your answer from the three possible options: Yes, No, Not Applicable.  You are encouraged to include a justification to your answer, either by referencing the appropriate section of your paper or providing a brief inline description (1-2 sentences). 
Please do not modify the questions.  Note that the Checklist section does not count towards the page limit. Not including the checklist in the first submission won't result in desk rejection, although in such case we will ask you to upload it during the author response period and include it in camera ready (if accepted).

 \begin{enumerate}

 \item For all models and algorithms presented, check if you include:
 \begin{enumerate}
   \item A clear description of the mathematical setting, assumptions, algorithm, and/or model. [\textbf{Yes}/No/Not Applicable]
   \item An analysis of the properties and complexity (time, space, sample size) of any algorithm. [\textbf{Yes}/No/Not Applicable]
   \item (Optional) Anonymized source code, with specification of all dependencies, including external libraries. [Yes/No/Not Applicable]
 \end{enumerate}

 \item For any theoretical claim, check if you include:
 \begin{enumerate}
   \item Statements of the full set of assumptions of all theoretical results. [\textbf{Yes}/No/Not Applicable]
   \item Complete proofs of all theoretical results. [\textbf{Yes}/No/Not Applicable]
   \item Clear explanations of any assumptions. [\textbf{Yes}/No/Not Applicable]     
 \end{enumerate}

 \item For all figures and tables that present empirical results, check if you include:
 \begin{enumerate}
   \item The code, data, and instructions needed to reproduce the main experimental results (either in the supplemental material or as a URL). [\textbf{Yes}/No/Not Applicable]
   \item All the training details (e.g., data splits, hyperparameters, how they were chosen). [\textbf{Yes}/No/Not Applicable]
         \item A clear definition of the specific measure or statistics and error bars (e.g., with respect to the random seed after running experiments multiple times). [\textbf{Yes}/No/Not Applicable]
         \item A description of the computing infrastructure used. (e.g., type of GPUs, internal cluster, or cloud provider). [\textbf{Yes}/No/Not Applicable]
 \end{enumerate}

 \item If you are using existing assets (e.g., code, data, models) or curating/releasing new assets, check if you include:
 \begin{enumerate}
   \item Citations of the creator If your work uses existing assets. [Yes/No/\textbf{Not Applicable}]
   \item The license information of the assets, if applicable. [Yes/No/\textbf{Not Applicable}]
   \item New assets either in the supplemental material or as a URL, if applicable. [Yes/No/\textbf{Not Applicable}]
   \item Information about consent from data providers/curators. [Yes/No/\textbf{Not Applicable}]
   \item Discussion of sensible content if applicable, e.g., personally identifiable information or offensive content. [Yes/No/\textbf{Not Applicable}]
 \end{enumerate}

 \item If you used crowdsourcing or conducted research with human subjects, check if you include:
 \begin{enumerate}
   \item The full text of instructions given to participants and screenshots. [Yes/No/\textbf{Not Applicable}]
   \item Descriptions of potential participant risks, with links to Institutional Review Board (IRB) approvals if applicable. [Yes/No/\textbf{Not Applicable}]
   \item The estimated hourly wage paid to participants and the total amount spent on participant compensation. [Yes/No/\textbf{Not Applicable}]
 \end{enumerate}

 \end{enumerate}
 
 \clearpage
 \newpage
 \onecolumn
\appendix
\aistatstitle{Optimal Local Convergence Rates of Stochastic First-Order Methods under Local $\alpha$-P\L:
Supplementary Materials}
\subsection*{Notations}
We adopt the following notation in the sequel. Calligraphic letters (e.g., $\mathcal{S}$) denote sets. Lowercase bold letters (e.g., $x$) denote vectors. $\|\cdot\|$ denotes the $\ell_{2}$-norm of a vector. We use $KL(\mu\|\nu):=\int \log\left(\frac{d\mu}{d\nu}(x)\right)\mu(dx)$ to denote the Kullback–Leibler (KL) divergence between two probability measures $\mu$ and $\nu$. The diameter of the subset $\mathcal{X}$ of $\mathbb{R}^{d}$ is defined by $\text{diam}(\mathcal{X}):=\sup_{x,y\in\mathcal{X}}\|x-y\|$. 
$\mathcal{C}$ denotes the class of continuous real functions. That is, $f$ is in differentiability class $\mathcal{C}^{k}$ if and only if there exists a $k$-th derivative of $f$ which is continuous.
Given functions $f, g: \mathcal{A} \to [0, \infty)$ where $\mathcal{A}$ could be any set, we use non-asymptotic big-O notation: $f = \mathcal{O}(g)$ if there exists a constant $c<\infty$ such that $f(a) \le c\cdot g(a)$ for all $a\in\mathcal{A}$ and $f = \Omega(g)$ if there is a constant $c > 0$ such that $f(a) \ge c \cdot g(a)$. We write $f = \tilde{\mathcal{O}}(g)$ as a shorthand for $f = \mathcal{O}(g\cdot\max\{1, (\log(g))^{k}\})$ for some integer $k>0$ and  $\tilde{\Omega}$ is similarly defined. The $d$-dimensional ball with radius $R$ around the center $x_{0}$ with respect to $\ell_{2}$-norm is denoted by $\mathbb{B}_{2}(x_{0};R):=\left\{x\in\mathbb{R}^{d}:\,\|x-x_{0}\|\le R\right\}$. For a subset $\mathcal{A}\subseteq \mathbb{R}^{d}$, its closure is denoted $\overline{\mathcal{A}}$
(and sometimes $\operatorname{cl}(\mathcal{A})$) and is the smallest closed set
containing $\mathcal{A}$. We use $\1(\cdot)$ for the indicator function: $\1(E)=1$ if event $E$ occurs and $0$ otherwise. We use $\sigma(\mathcal{S})$ to denote the sigma-field generated by a collection $\mathcal{S}$, i.e., the smallest sigma-field containing $\mathcal{C}$. For notational convenience, throughout all proofs in appendices, we write $L_{1}$ and $L_{2}$ for the
local regularity constants of $F$ on the compact neighborhood $\mathcal N(\mathcal M)$ around the connected component $\mathcal{M}$ of local minimizers of $F$:
for all $x,y\in\mathcal N(\mathcal M)$,
\begin{align*}
    &|F(x)-F(y)| \le L_{1}\,\|x-y\|\\
&\|\nabla F(x)-\nabla F(y)\| \le L_{2}\,\|x-y\|.
\end{align*}

\paragraph{Organization of the Appendix.} Appendix~\ref{related_work} reviews P{\L}-type conditions and oracle-complexity upper/lower bounds in both convex and nonconvex settings. 
Appendix~\ref{append_exist_univ_tau} motivates using a single (uniform) constant \(\tau\) in \Cref{assum_PL_alpha}. 
Appendix~\ref{append:gradient-dominated functions} explains why \(\alpha\in(1,2)\) captures flatness and why many ML losses satisfy the K{\L} inequality, hence admit a local \(\alpha\)–P{\L} property; Appendix~\ref{append_d} further shows that at nondegenerate minima one recovers the classical local \((\alpha=2)\)–P{\L}. 
Proofs of the main results from \Cref{sec:local_PL} are deferred to Appendix~\ref{append_sec:local PL}. The proof of \Cref{lower_bound_non-convex_PL_L-avg-smooth} appears in Appendix~\ref{proof_sec:3.2}, and the proof of \Cref{thm:cvg_SARAH} appears in Appendix~\ref{append_upper_bound_result}. 
The results in \Cref{sec_Lower bound for stochastic convex} are proved in Appendix~\ref{append:proofs_sec:4}. 
Appendix~\ref{append_comparison_relate_work} compares \Cref{lower_bound_PL_convex_NBS} with the lower bound of \citet{foster2019complexity}. 
Appendix~\ref{append_experiment} provides details for \Cref{sec:experiment}, and Appendix~\ref{app:mf_experiments} reports additional empirical evaluations.


\section{Related work}\label{related_work}
\textbf{P\L\ property and its applications:}
The $(\alpha=2)$-P\L\ property (commonly called PŁ condition) was initially introduced by \citep{polyak1963gradient}.
\cite{karimi2016linear} showed that the $(\alpha=2)$-P\L\ condition is less restrictive than several known global optimality conditions in the literature of machine learning~\citep{liu2014asynchronous,necoara2019linear,zhang2013linear}. 
 The $(\alpha=2)$-P\L\ property is satisfied (sometimes locally rather than globally, and also under distributional assumptions) for the population risk in some learning models including neural networks with one hidden layer \citep{li2017convergence}, ResNets with linear activation \citep{hardt2016identity}, generalized linear models and robust regression \citep{foster2018uniform}.\\
 The global P\L{} inequality is often too restrictive for modern neural networks: even when optimization behaves well in practice, verifying a uniform global PL constant typically relies on substantial---and often impractical---over-parameterization, as global convergence guarantees are proved in very wide-network regimes \citep{allenZhu2019convergence,du2019gd,zou2018sgd}. \cite{islamov2024loss} make this point explicit and construct settings where classical PL-type (and related ``aiming'') conditions fail, motivating weaker, geometry-aware alternatives; they propose landscape conditions that avoid heavy over-parameterization and tolerate saddle regions. This evidence justifies focusing on \emph{local} \(\alpha\)-PL behavior near basins of attraction---where training actually settles---rather than assuming a global PL landscape that rarely holds in realistic deep models.\\
 \citet{aich2025lplr} show that neighborhoods around initialization that are \emph{locally quasi-convex} with a \emph{stable neural tangent kernel (NTK)} in fact satisfy a \emph{local PL} inequality; they term such neighborhoods \emph{Locally Polyak--\L{}ojasiewicz Regions (LPLRs)} and prove that gradient descent attains \emph{linear} convergence once iterates enter an LPLR. Their analysis provides a concrete mechanism (NTK stability) by which local PL---the special case \(\alpha=2\) of our local \(\alpha\)-PL framework---emerges at finite number of hidden neurons per layer (width). Our results are complementary: while \citet{aich2025lplr} establish deterministic linear rates for GD under local PL, we derive \emph{optimal local convergence rates for stochastic first-order methods} under the more general local \(\alpha\)-PL condition (\(\alpha\in[1,2]\)), thereby covering noisy/mini-batch regimes.

\textbf{Complexity lower bounds:} In the convex setting, several complexity lower bounds have been derived by establishing a connection between stochastic optimization and hypothesis testing.
For instance, \cite{shapiro2005complexity} reduced a 
class of one-dimensional linear optimization problems to a binary hypothesis testing problem. 
Later on, this approach was used to derive the minimax oracle complexity of stochastic convex optimization in several works~\citep{agarwal2009information,raginsky2009information}. As an example, \cite{agarwal2009information} obtained a lower bound of $\Omega(\varepsilon^{-2})$ for the minimax oracle complexity of stochastic first-order methods in order to achieve an $\varepsilon$-global-optimum point of a bounded-domain Lipschitz convex function. This bound is derived through a reduction to a Bernoulli vector parameter estimation problem. For the same function class in \citep{agarwal2009information},  \cite{raginsky2009information} derived a complexity lower bound of $\Omega(\varepsilon^{-2})$ by a reduction to hypothesis testing with feedback, where the oracle provides noisy gradients by adding Gaussian noise to the true gradients\footnote{Note that \cite{agarwal2009information} considered noisy first-order oracles which do not allow additive noise due to a coin-tossing construction.}.
If the function is smooth (instead of Lipschitz) and convex, and the initial optimality gap is bounded (instead of the domain being bounded), a lower bound of $\Omega(\varepsilon^{-2})$ exists for the oracle complexity of stochastic first-order methods, according to Foster et al.'s complexity analysis \citep{foster2019complexity}. This bound is derived through a reduction to a noisy binary search problem. 

\noindent
In the non-convex setting, under $(\alpha=2)$-P\L\ and $L$-smoothness, \cite{yue2022lower} established a lower bound of $\Omega(L\tau \log(\varepsilon^{-1}))$ on the deterministic first-order methods to achieve an $\varepsilon$-global-optimum point\footnote{In this lower bound, the dependencies on $L$, $\tau$, and $\varepsilon$ are the same as the ones in gradient descent's iteration complexity.}. The main idea is based on a ``zero-chain" function\footnote{For a zero-chain function having a sufficiently high dimension, $d-T$ entries of update vector will never reach their optimal values after the execution of any first-order algorithm for a given $T$ number of iterations.} proposed as a hard instance, which is composed of the worst convex function designed by \citep{nesterov2003introductory} and a coordinate-wise function that makes the function non-convex.
More recently, \cite{yu2023optimal} obtained  lower bounds on the oracle complexity of zeroth-order methods for non-convex smooth and $\alpha$-P\L\ functions with an additive noise oracle. This lower bound is tight in terms of the dependence on $\varepsilon$ for dimensions less than six.

For our lower bound in the non-convex setting (Theorem \ref{lower_bound_non-convex_PL_L-avg-smooth}), akin to \cite{raginsky2009information} we use a reduction to hypothesis testing with an additive Gaussian noise oracle. We benefit from a set of mutual information bounds to establish a tight lower bound on the complexity of stochastic first-order optimization algorithms for smooth and gradient-dominated functions. What distinguishes Theorem \ref{lower_bound_non-convex_PL_L-avg-smooth} from \citep[Theorem 2]{raginsky2009information} is the construction of hard instances that satisfy local $\alpha$-P\L. These instances allow us to derive the optimal dependence on the precision $\varepsilon>0$ in the complexity lower bound.

In the convex setting, under local $(\alpha,\tau,\varepsilon)$-P\L\ property, we use a reduction to the noisy binary search problem in order to obtain a tight lower bound for first-order algorithms. 
In Appendix \ref{append_comparison_relate_work}, we discuss in more detail how our approach for deriving the lower bound in Theorem \ref{lower_bound_PL_convex_NBS} compares to \citep{foster2019complexity}. 

\textbf{Complexity upper bounds:}
In the non-convex unconstrained optimization setting, \cite{khaled2020better} showed that under $(\alpha=2)$-P\L\ condition, stochastic gradient descent (SGD) with time-varying step-size reaches an $\varepsilon$-global-optimum point with an oracle complexity of $\mathcal{O}(1/\varepsilon)$. Furthermore, it was shown that this dependency of the oracle complexity on $\varepsilon$ is optimal for SGD \citep{nguyen2019tight}. Recently, \cite{fontaine2021convergence} obtained an oracle complexity $\mathcal{O}(\varepsilon^{-4/\alpha+1})$ for SGD under smoothness and $\alpha$-P\L\ property  for $1\le \alpha\le 2$. \citet{fatkhullin2022sharp} establish an oracle complexity of $\mathcal{O}(\varepsilon^{-2/\alpha})$ for the variance-reduced method \textsc{PAGER} under a batch-smooth stochastic first-order oracle. Their analysis assumes that the entire trajectories of SGD and \textsc{PAGER} remain within the region where the objective satisfies the local-$\alpha$ P{\L} condition. In contrast, we \emph{show} that the trajectory of \Cref{alg:SARAH} stays inside the local neighborhood on which the function satisfies local $\alpha$-P{\L}. For convex functions, when $\alpha$-P\L\ holds on an $\varepsilon$-sub-level set of a global minimizer (see Assumption \ref{assum_local_PL_alpha}), stochastic first-order algorithms achieve an $\varepsilon$-global-optimum point with $\tilde{\mathcal{O}}(\varepsilon^{-2/\alpha})$ samples of stochastic gradients \citep{xu2017stochastic,yang2018rsg}. In Theorem \ref{lower_bound_PL_convex_NBS}, we show that the dependency of number of queries $\tilde{\mathcal{O}}(\varepsilon^{-2/\alpha})$ on $\varepsilon$ is tight.
\section{Universality  of the constant \texorpdfstring{$\tau$}{tau} in \Cref{assum_PL_alpha}}\label{append_exist_univ_tau}
In this part, we first show that $\alpha$-K\L\ property implies $\alpha$-P\L\ one as follows:
\begin{lemma}[K{\L} $\Rightarrow$ local $\alpha$--P{\L} near a compact isolated component]\label{lemm_KL_to_PL_corrected}
Let $F:\mathbb{R}^d\to\mathbb{R}$ be $\mathcal{C}^1$. Let $\mathcal{M}$ be a \emph{compact, isolated, connected} component of local minimizers of $F$, and set $l:=F(y)$ for any $y\in\mathcal{M}$. 
Assume that $F$ satisfies a local K{\L}\ inequality at each $x_\ast\in\mathcal{M}$: there exist $r_{x_\ast}>0$, $\alpha_{x_\ast}\in[1,2]$, and $\tau_{x_\ast}>0$ such that
\[
F(x)-l \;\le\; \tau_{x_\ast}\,\|\nabla F(x)\|^{\alpha_{x_\ast}}
\qquad\text{for all }x\in \mathbb{B}_2(x_\ast;r_{x_\ast})\text{ with }F(x)>l.
\]
Then there exist $r>0$, $\alpha\in[1,2]$, and $\tau>0$ such that, for every
\[
x \in \mathbb{B}_r(\mathcal{M}) \;:=\; \bigcup_{x^\ast\in \mathcal{M}} \mathbb{B}_2(x^\ast;r),
\]
we have $F(x)>l$ for $x\notin\mathcal{M}$ and the uniform local $\alpha$--P{\L} bound
\[
F(x)-l \;\le\; \tau\,\|\nabla F(x)\|^{\alpha}.
\]
\end{lemma}

\begin{proof}
By the assumption, for each $x^\ast\in\mathcal{M}$ there exist $r_{x^\ast}>0$, $\alpha_{x^\ast}\in[1,2]$, and $\tau_{x^\ast}>0$ with
\begin{equation}\label{eq:local_alpha_PL_each_point}
F(x)-l \;\le\; \tau_{x^\ast}\,\|\nabla F(x)\|^{\alpha_{x^\ast}}
\qquad \forall x\in\mathbb{B}_2(x^\ast;r_{x^\ast})\text{ with }F(x)>l.
\end{equation}
Since $\mathcal{M}$ is an isolated compact set of local minima, by shrinking $r_{x^\ast}$ if needed, we may assume that
\[
F(x)>l \quad \text{for all } x\in \mathbb{B}_2(x^\ast;r_{x^\ast})\setminus \mathcal{M},
\]
and also, by continuity of $F$ at $\mathcal{M}$, that
\[
F(x)-l \;\le\; 1 \quad \text{for all } x\in \mathbb{B}_2(x^\ast;r_{x^\ast}).
\]
The compactness of $\mathcal{M}$ yields that there is a \emph{finite} subcover $\mathcal{A}=\{y^\ast_1,\dots,y^\ast_m\}\subset\mathcal{M}$ such that
\[
\mathcal{U}\;:=\;\bigcup_{y^\ast\in\mathcal{A}} \mathbb{B}_2(y^\ast;r_{y^\ast}) \;\supset\; \mathcal{M}.
\]
Define
\[
\alpha \;:=\; \min_{y^\ast\in\mathcal{A}} \alpha_{y^\ast} \in [1,2], 
\qquad
\tau \;:=\; \max_{y^\ast\in\mathcal{A}} \tau_{y^\ast}^{\alpha/\alpha_{y^\ast}} \;>\;0.
\]
Fix any $x\in\mathcal{U}$ and choose $y^\ast\in\mathcal{A}$ with $x\in\mathbb{B}_2(y^\ast;r_{y^\ast})$. From \eqref{eq:local_alpha_PL_each_point} and $F(x)-l\le 1$ we get
\[
\|\nabla F(x)\| \;\ge\; \tau_{y^\ast}^{-1/\alpha_{y^\ast}}\,\big(F(x)-l\big)^{1/\alpha_{y^\ast}}
\;\ge\; \tau^{-1/\alpha}\,\big(F(x)-l\big)^{1/\alpha},
\]
where we used (i) $\alpha\le \alpha_{y^\ast}$ so $(F(x)-l)^{1/\alpha_{y^\ast}}\ge (F(x)-l)^{1/\alpha}$ (since $F(x)-l\le 1$), and (ii) $\tau\ge \tau_{y^\ast}^{\alpha/\alpha_{y^\ast}}$ so $\tau^{-1/\alpha}\le \tau_{y^\ast}^{-1/\alpha_{y^\ast}}$.
Rearranging the terms gives the desired uniform bound
\[
F(x)-l \;\le\; \tau\,\|\nabla F(x)\|^{\alpha}, \qquad \forall x\in\mathcal{U}.
\]
Finally, since $\mathcal{U}$ is open and contains $\mathcal{M}$, choose $r>0$ with $\mathbb{B}_r(\mathcal{M})\subset \mathcal{U}$. As $\mathcal{M}$ is an isolated component of local minimizers of $F$, $F(x)>l$ for $x \in \mathbb{B}_r(\mathcal{M})\setminus \mathcal{M}$. 
\end{proof}

\begin{lemma}[Uniformization of the local $\alpha$--P{\L} constant across components]
\label{lem:uniform_tau_over_components}
Let $F:\mathbb{R}^d\to\mathbb{R}$ be $\mathcal{C}^1$ and fix $\alpha\in[1,2]$. 
Let $\mathcal{M}_F$ denote the set of local minimizers of $F$, and suppose $\mathcal{M}_F$ decomposes into \emph{isolated} connected components. 
For each connected component $\mathcal{M}\subset \mathcal{M}_F$, set $l_{\mathcal{M}}:=F(y)$ for all $y\in\mathcal{M}$, and assume that there exist an open neighborhood $\mathcal{U}_{\mathcal{M}}$ of $\mathcal{M}$ and a constant $\tau_{\mathcal{M}}>0$ such that
\begin{align}\label{eq_0101012}
    F(x)-l_{\mathcal{M}} \;\le\; \tau_{\mathcal{M}}\,\|\nabla F(x)\|^{\alpha}
\qquad\text{and}\qquad
F(x)>l_{\mathcal{M}}\ \text{for all }x\in \mathcal{U}_{\mathcal{M}}\setminus \mathcal{M}.
\end{align}

Then the following hold.

\smallskip
\noindent(a)
If $\mathcal{M}_F$ has finitely many connected components $\{\mathcal{M}_1,\dots,\mathcal{M}_m\}$, then let
\[
\tau \;:=\; \max_{1\le j\le m}\ \tau_{\mathcal{M}_j},
\qquad
\mathcal{U}\;:=\;\bigcup_{j=1}^m \mathcal{U}_{\mathcal{M}_j},
\]
and for all $x\in\mathcal{U}$ let $\mathcal{M}$ be the (unique) component with $x\in\mathcal{U}_{\mathcal{M}}$. Then we have that
\[
F(x)-l_{\mathcal{M}} \;\le\; \tau\,\|\nabla F(x)\|^{\alpha}
\quad\text{and}\quad
F(x)>l_{\mathcal{M}}\ \text{if }x\notin\mathcal{M}.
\]

\noindent(b)
More generally, let $K\subset\mathbb{R}^d$ be compact. 
Then only finitely many components have neighborhoods intersecting $K$; denote this finite index set by
\[
\mathsf{I}(K)\;:=\;\{\mathcal{M}\ \text{component}:\ \mathcal{U}_{\mathcal{M}}\cap K\neq\emptyset\}.
\]
Setting
\[
\tau(K)\;:=\;\max_{\mathcal{M}\in \mathsf{I}(K)}\ \tau_{\mathcal{M}},
\qquad
\mathcal{U}(K)\;:=\;\bigcup_{\mathcal{M}\in \mathsf{I}(K)} \mathcal{U}_{\mathcal{M}},
\]
we have that, for all $x\in\mathcal{U}(K)$ let $\mathcal{M}$ be the (unique) component with $x\in\mathcal{U}_{\mathcal{M}}$,
\[
F(x)-l_{\mathcal{M}} \;\le\; \tau(K)\,\|\nabla F(x)\|^{\alpha}
\quad\text{and}\quad
F(x)>l_{\mathcal{M}}\ \text{if }x\notin\mathcal{M}.
\]
\end{lemma}

\begin{proof}
Since each component $\mathcal{M}$ is isolated, by shrinking the neighborhoods $\{\mathcal{U}_{\mathcal{M}}\}$ if necessary we may assume they are pairwise disjoint and hence that \eqref{eq_0101012} holds on each $\mathcal{U}_{\mathcal{M}}$.

\emph{(a) Finite-component case.}
Let $\tau:=\max_j \tau_{\mathcal{M}_j}$. 
Fix $x\in\mathcal{U}$ and let $\mathcal{M}$ be the unique component with $x\in\mathcal{U}_{\mathcal{M}}$. 
Then \eqref{eq_0101012} gives $F(x)-l_{\mathcal{M}} \le \tau_{\mathcal{M}}\|\nabla F(x)\|^\alpha \le \tau\|\nabla F(x)\|^\alpha$, and the strict inequality of $\mathcal{M}$ also follows from \eqref{eq_0101012}. 

\emph{(b) Compact-region case.}
Because the neighborhoods $\{\mathcal{U}_{\mathcal{M}}\}$ are pairwise disjoint and each contains its (closed) component $\mathcal{M}$, only finitely many of them can intersect a compact set $K$ (otherwise we would obtain an infinite family of pairwise disjoint open sets with points in $K$, contradicting compactness via a standard finite-subcover argument). 
Thus $\mathsf{I}(K)$ is finite, and the same max argument as in part (a) with $\tau(K):=\max_{\mathcal{M}\in\mathsf{I}(K)}\tau_{\mathcal{M}}$ yields the claim on $\mathcal{U}(K)=\bigcup_{\mathcal{M}\in\mathsf{I}(K)}\mathcal{U}_{\mathcal{M}}$.
\end{proof}


\section{\texorpdfstring{$\alpha$}{alpha}-P\L\ functions}\label{append:gradient-dominated functions}
In this section, we start in Section~\ref{append_why_alpha_PL} by motivating the local $\alpha$-P\L\ for the regime $\alpha\in(1,2)$. Then we introduce three closed-form examples that satisfy the $\alpha$-P\L\ property for $1<\alpha<2$. Next we provide in Section~\ref{app:network-revenue-management-example} an example of $(\alpha=1)$-P\L\ function in the network revenue management problem. Finally, we discuss the implications of K\L\ theory in machine learning in Section~\ref{app:kl-examples}.

\subsection{Local alpha P\L\ regime with $\alpha\in(1,2)$}\label{append_why_alpha_PL}
In this section, we explain why exponents \(\alpha\in(1,2)\) capture local flatness: 
if the first nonzero variation of \(F\) at a minimizer occurs at order \(p>2\) (i.e., all derivatives up to order \(p-1\) vanish), then \(F\) exhibits local \(\alpha\)–P{\L} growth with \(\alpha=\tfrac{p}{p-1}\in(1,2)\).
We also present explicit examples of functions that satisfy a local \(\alpha\)–P{\L} inequality for \(1<\alpha<2\).

\emph{Definition (order-$p$ contact at a minimizer).}
Let $F:\mathbb{R}^d\to\mathbb{R}$ be $\mathcal{C}^{p}$ near a local minimizer $x^\star$ with value $l:=F(x^\star)$. 
We say that $F$ has \emph{order-$p$ contact} at $x^\star$ (with $p>2$) if all derivatives up to order $p-1$ vanish at $x^\star$,
\[
\nabla F(x^\star)=0,\quad \nabla^2 F(x^\star)=0,\ \ldots,\ D^{p-1}F(x^\star)=0,
\]
and the first nonzero term in the Taylor expansion of $F$ is of the order $p$ with a positive coefficient in the sense that, for $h$ small,
\[
F(x^\star+h)=l+c_p\|h\|^p+o(\|h\|^p)\quad\text{for some }c_p>0.
\]
(For manifolds of minimizers $\mathcal{M}$, the same definition applies \emph{in the normal directions} to $\mathcal{M}$; tangential directions remain flat.)

\emph{Consequences.} 
For sufficiently small $\|h\|$,
\begin{align*}
    c_1\|h\|^{p} \le F(x^\star+h)-l \le c_2\|h\|^{p},\\\
c_3\|h\|^{p-1} \le \|\nabla F(x^\star+h)\| \le c_4\|h\|^{p-1}.
\end{align*}
Eliminating $\|h\|$ yields the local error bound
\begin{align*}
    F(x)-l \ \le\ \tau\,\|\nabla F(x)\|^{\alpha}\qquad\text{ with}\quad
\alpha=\frac{p}{p-1}\in(1,2),\ \ \tau=\frac{c_2}{c_3^{\,\alpha}}.
\end{align*}
Thus $\alpha\in(1,2)$ corresponds to \emph{degenerate curvature} of $F$ at local minimizers (the Hessian vanishes or is rank-deficient). 
For a minimizer manifold $\mathcal{M}$, writing $x=(u,v)$ with $u$ normal to $\mathcal{M}$ and $v$ tangential to $\mathcal{M}$, if $F(u,v)=\phi(u)$ with $\phi(u)=\Theta(\|u\|^p)$, the same relation i.e. $\alpha=p/(p-1)$ holds in a small tube around $\mathcal{M}$.

\paragraph{Explicit models (with $p>2$ and $\alpha=p/(p-1) \in (1,2)$).}
\begin{example}[Isolated minimizer]
    Consider $f(x) = c \cdot\|x\|^{q}$, where $q > 2$ and $c > 0$. $f(x)$ is $\alpha$-P\L\ with $\alpha=q/(q-1)$ and $\tau=q^{q/(1-q)}\cdot C^{1/(1-q)}$.
\end{example}
\begin{example}[Manifold of minimizers]
    $F(u,v)=\|u\|^{p}$ on $\mathbb{R}^r\times\mathbb{R}^{d-r}$ has minimizers $\mathcal{M}=\{(0,v) \;|\; v \in \mathbb{R}^{d-r}\}$; the same local $\alpha$–P{\L} bound holds in a tubular neighborhood of $\mathcal{M}$ (normal directions behave as the isolated case).
\end{example}
\begin{example}[Product loss]
    In this example, $F:\mathbb{R}^{d}\to\mathbb{R}$ is defined as $F(x)=(x_{1}x_{2}\cdots x_{d})^{2}$. It appears for instance as the squared loss of a one-dimensional $d$-layer linear neural network model on a 2-dimensional data point $(z,y)=(1,0)$ where $z=1$ is the data sample (feature) and $y=0$ the label so that the squared loss on $(x_{1}x_{2}\cdots x_{d}z-y)^{2}$. $F(x)$ satisfies the global $(\alpha=2d/(2d-1))$-P\L\ property because:
\begin{align}
    F(\mathbf{x})-F^{*}\le \frac{1}{(4d)^{\frac{d}{2d-1}}}\|\nabla F(\mathbf{x})\|^{\frac{2d}{2d-1}}.
\end{align}
Indeed,
\begin{align}
    \MoveEqLeft[4]\|\nabla F(\mathbf{x})\|^{2}=4(x_{1}x_{2}\cdots x_{d})^{4}\cdot \left[\frac{1}{x^{2}_{1}}+\ldots+\frac{1}{x^{2}_{d}}\right]\nonumber\\
    &\overset{(a)}{\ge} 4(x_{1}x_{2}\cdots x_{d})^{4}\cdot\frac{d}{\left(x_{1}\ldots x_{d}\right)^{2/d}}\nonumber\\
    &=4d (F(x)-F^{*})^{2-1/d},
\end{align}
where (a) follows because the harmonic mean is always upper bounded by the geometric mean (HM-GM inequality).
\end{example}

\paragraph{Why does $\alpha\in(1,2)$ PL appear in over-parametrized models.}
Empirically and structurally, The loss function landscape of trained network exhibits many \emph{flat} directions in the neighborhood of solutions. 
Flatness means that the Hessian is rank-deficient and that the first nonzero variation normal to the minimizer set can be of order $p>2$. 
As explained above, this yields the local growth law $F(x)-l\lesssim \|\nabla F(x)\|^{\alpha}$ with $\alpha=p/(p-1)\in(1,2)$. 
The following observations support this picture.\\
\emph{(i) Symmetries create flat directions.} 
Common architectures admit parameter symmetries (e.g., permutations of hidden units; positive-homogeneous layer rescaling for ReLU), producing connected families of equivalent solutions and rank-deficient Hessians at minima \citep{dinh2017sharp,neyshabur2015pathsgd}. \emph{(ii) Empirical curvature.}
Hessian spectra measured at trained solutions show many near-zero eigenvalues with a few outliers, indicating extended flat directions \citep{sagun2017empirical,ghorbani2019investigation}.
\emph{(iii) connectivity of solution set.}
Independently trained solutions are connected by simple low-loss curves, revealing wide, flat basins \citep{garipov2018modeconnectivity,draxler2018essentially}.

\subsection{\texorpdfstring{$(\alpha=1)$}{alpha}-P\L\ property in network revenue management problem}
\label{app:network-revenue-management-example}

Another class of examples where an \texorpdfstring{$(\alpha=1)$}{alpha}-P\L\ property appears is found in supply chain and revenue management. As shown in \cite{chen2022efficient}, problems in these domains can often be formulated as
\[
\min_{x\in\mathcal{X}}F(x):=\mathbb{E}[\phi(x\land \xi)],
\]
where $\mathcal{X}$ is a convex and compact subset of $\mathbb{R}^{d}$, $\xi \in \mathbb{R}^{d}$ is a non-negative random vector, the symbol $\land$ denotes component-wise minimum, and $\phi(\cdot)$ is a convex function. As a result, $F(\cdot)$ becomes non-convex. However, such a problem often admits a convex reformulation
\[
\min_{y\in\mathcal{Y}} G(y):=F(g^{-1}(y)),
\]
where $g(x) = x \land \xi$ and $g^{-1}(\uv):=(g_{1}^{-1}(u_{1}),\ldots,g_{d}^{-1}(u_{d}))$ with $g_{i}^{-1}(u_{i}):=\min_{x\in\mathcal{X}}\{x_{i}:g_{i}(x_{i})\ge u_{i}\}$. Therefore, the function $G(\cdot)$ is convex. Suppose $g : \mathcal{X} \to \mathcal{Y}$ is a bijective differentiable map with $\nabla g(x)\succeq \lambda I$, $\lambda \ge 0$ for all $x\in\mathcal{X}$. Then function $F(\cdot)$ satisfies $(\alpha=1)$-P\L\ property. Indeed,  for any $x\in\mathcal{X}$ with $g(x) = y$,
\begin{align*}
    \MoveEqLeft[4]F(x)-F^{*}=G(y)-G^{*}\\
    &\le \langle\nabla G(y),y-y^{*}\rangle\\
    &\le \|\nabla G(y)\|\cdot \|y-y^{*}\|\\
    &=\|\nabla g^{-1}(y)\nabla F(x)\|\cdot \|y-y^{*}\|\nonumber\\
    &\le \frac{D_{\mathcal{Y}}}{\lambda}\|\nabla F(x)\|,
\end{align*}
where $D_{\mathcal{Y}}$ is the diameter of the set $\mathcal{Y}$. Therefore, $F(\cdot)$ is a $(\alpha=1)$-P\L\ function.

\subsection{Implications of K\L/$\alpha$-P{\L}}\label{app:kl-examples}

\paragraph{Kurdyka--\L{}ojasiewicz (K\L{}) inequality.}
Let $F:\mathbb{R}^d\to(-\infty,+\infty]$ be proper and lower semicontinuous, and assume $F$ is $\mathcal{C}^1$
on a neighborhood of a compact set $\mathcal C\subset\mathbb{R}^d$. Suppose $\mathcal C\subset\mathrm{crit}\,F$ where $\mathrm{crit}\,F:=\{x\in\mathbb{R}^{d}:\,\nabla F(x)=0\}$
and that $F$ is constant on $\mathcal C$; denote this constant by $F_{\mathcal C}$.
We say that \emph{$F$ satisfies the K\L{} inequality on $\mathcal C$} if there exist $\eta>0$, a concave
function $\phi:[0,\eta)\to\mathbb{R}_+$ with $\phi(0)=0$, $\phi\in \mathcal{C}^1((0,\eta))$, and $\phi'(s)>0$ for
$s\in(0,\eta)$, and a neighborhood $\mathcal N$ of $\mathcal C$ such that, for all $x\in\mathcal N$ with
$F_{\mathcal C}<F(x)<F_{\mathcal C}+\eta$,
\[
\phi'\!\big(F(x)-F_{\mathcal C}\big)\,\|\nabla F(x)\|\ \ge\ 1.
\]
A common parametrization uses the \emph{K\L{} exponent} $\theta\in[0,1)$: there exist $c>0$ and
$\eta>0$ such that $\phi(s)=c\,s^{1-\theta}$ on $[0,\eta)$, in which case for all $x\in\mathcal{N}$ with $F_{\mathcal C}<F(x)<F_{\mathcal C}+\eta$,
\begin{align}\label{eq:KL-theta}
    \|\nabla F(x)\|\ \ge\ \kappa\,\big(F(x)-F_{\mathcal C}\big)^{\theta}\qquad\text{for some }\kappa>0.
\end{align}
\emph{(For nonsmooth $F$, replace $\|\nabla F(x)\|$ by $\mathrm{dist}(0,\partial F(x))$.)}
Real-analytic functions and, more broadly, definable/tame functions satisfy the K{\L} inequality around each compact set of critical points \citep{kurdyka1998gradients,bolte2007lojasiewicz,attouch2010proximal}.

\paragraph{Local $\alpha$-P{\L} condition.}
Given a target set of minimizers $\mathcal M$, the function $F$ obeys a \emph{local $\alpha$-P{\L}} inequality on a neighborhood $\mathcal U$ of $\mathcal M$ if there exist $\tau>0$ and $\alpha\in[1,2]$ such that
\begin{align}\label{eq:local-PL-alpha}
&F(x)-F^{*} \ \le\ \tau\,\|\nabla F(x)\|^{\alpha}\qquad \forall x\in\mathcal U,\nonumber\\
&\quad\text{where } F^{*}:=F(y)\ \text{for any } y\in\mathcal M.
\end{align}
When $\alpha=2$ we recover the classical P{\L} inequality. Matching equations \eqref{eq:KL-theta} and \eqref{eq:local-PL-alpha} with each other require to take $\alpha=1/\theta$: if $F$ has K{\L} exponent $\theta\in(0,1]$ near $\mathcal M$, then \eqref{eq:local-PL-alpha} holds with $\alpha=1/\theta$ (up to rescaling of constants); in particular, nondegenerate minima (a nondegenerate minimum is a local minimizer $x$ such that $\nabla^{2}F(x)\succ 0$) have $\theta=1/2$ and hence $\alpha=2$ (see more details in Appendix~\ref{append_d}).

\paragraph{o-minimal structures, definable sets, and definable functions.}
An \emph{o-minimal structure} $\mathcal{S}$ on the ordered field $(\mathbb{R},+,\cdot,\le)$ is a sequence $\{\mathcal{S}_n\}_{n\in\mathbb{N}}$ where each $\mathcal{S}_n$ is a collection of subsets of $\mathbb{R}^n$ (called the \emph{definable sets of arity $n$}) satisfying:
\begin{itemize}
  \item(Closure) Each $\mathcal{S}_n$ is closed under finite unions, finite intersections, and complements; if $A\in\mathcal{S}_n$ and $B\in\mathcal{S}_m$, then $A\times B\in\mathcal{S}_{n+m}$; and if $A\in\mathcal{S}_{n+m}$, then any linear image or $k$-dimensional coordinate projection of $A$ belongs to the appropriate $\mathcal{S}_k$.
  \item(Basic sets) All algebraic sets are definable: for every polynomial $p:\mathbb{R}^n\!\to\!\mathbb{R}$, the zero set $\{x\in\mathbb{R}^n:p(x)=0\}$ lies in $\mathcal{S}_n$.
  \item(o-minimality) Every set in $\mathcal{S}_1$ is a finite union of points and open intervals. In particular, there are no “wild” (e.g., fractal or infinitely oscillatory) definable subsets of $\mathbb{R}$.
\end{itemize}
A set $A\subset\mathbb{R}^n$ is \emph{definable (in $\mathcal{S}$)} if $A\in\mathcal{S}_n$.

A function $f:\mathbb{R}^d\to\mathbb{R}^m$ is \emph{definable (in $\mathcal{S}$)} if its \emph{graph}
\[
\operatorname{gph} f \;:=\; \{(x,y)\in\mathbb{R}^{d}\times\mathbb{R}^{m} : y=f(x)\}
\]
is a definable set; equivalently, $\operatorname{gph} f\in\mathcal{S}_{d+m}$. 
Thus, when we say ``$f$ is definable'', we are not placing $f$ itself inside a set collection; rather, we certify definability via its graph being one of the sets admitted by the structure.

\paragraph{Semialgebraic sets and functions (the structure $\mathbb{R}_{\mathrm{alg}}$).}
A set $S\subset\mathbb{R}^n$ is \emph{semialgebraic} if it can be constructed from finitely many polynomial equalities and inequalities using finitely many unions and intersections. 
Typical examples include intervals, halfspaces, Euclidean balls/ellipsoids, polynomial sublevel sets, and finite unions of these. 
The family of all semialgebraic sets $\{\mathcal{S}_n\}_{n\ge 1}$ forms an o\!-minimal structure, denoted $\mathbb{R}_{\mathrm{alg}}$.
A function $f:\mathbb{R}^d\to\mathbb{R}^m$ is \emph{semialgebraic} precisely when $\operatorname{gph} f$ is a semialgebraic subset of $\mathbb{R}^{d+m}$.



Two canonical o-minimal structures are
\begin{enumerate}
\item $\mathbb{R}_{\mathrm{alg}}$, whose definable sets/functions are exactly the semialgebraic ones,
\item $\mathbb{R}_{\mathrm{an},\exp}$, an o-minimal expansion of the real field obtained by adjoining (i) all \emph{restricted real-analytic} functions (i.e., real-analytic on a box and extended by $0$ outside it) and (ii) the exponential function. In particular, any set/function definable using these primitives (together with the field operations and order) is definable in $\mathbb{R}_{\mathrm{an},\exp}$.
\end{enumerate}
Many ML loss functions used in practice are definable: polynomial losses/constraints; piecewise-linear maps such as ReLU and max-pooling (built by finitely many linear pieces); compositions of these with affine layers; and standard regularizers like $\ell_1/\ell_2$. Definability is stable under the operations used to assemble models (addition, composition, taking products, projections), which is why these classes are called \emph{tame} \citep{dries1998tame,vdDriesMacintyreMarker1994,wilkie1996}.
In the rest of this section, we detail why some of the most popular ML loss functions are tame. 


\begin{itemize}\itemsep3pt
\item \textbf{Least squares / ridge / polynomial models.} Polynomial maps and norms are semialgebraic, and the semialgebraic class is closed under finite unions/intersections, products, addition/multiplication, and composition. Hence, empirical risks built from squares and polynomial penalties are semialgebraic (thus definable and tame) and, by the K{\L} theory for definable/subanalytic functions, satisfy a local K{\L} inequality \citep{bolte2007lojasiewicz,attouch2010proximal,dries1998tame}.
\item \textbf{Logistic / cross-entropy.}
We consider two standard cases.

\emph{(a) Binary logistic regression.}
Given data $(x_i,y_i)\in\mathbb{R}^d\times\{\pm1\}$, the empirical logistic risk
\[
L(w)=\frac{1}{n}\sum_{i=1}^n \log\!\big(1+\exp(-y_i\,x_i^\top w)\big) + \lambda\,R(w)
\]
is \emph{real-analytic} on $\mathbb{R}^d$ when $R$ is analytic (e.g., $R(w)=\tfrac12\|w\|_2^2$), because $t\mapsto\log(1+\exp t)$ is analytic on $\mathbb{R}$, and compositions/sums of analytic functions are analytic. 
By the classical {\L}ojasiewicz/K{\L} gradient inequality, every real-analytic function satisfies a local K{\L} inequality around each critical point \citep{bolte2007lojasiewicz,kurdyka1998gradients}. 
Hence $L$ enjoys a local K{\L} inequality, which is equivalent to a local $\alpha$–P{\L} inequality for some $\alpha\in(1,2]$ near its critical points.

\emph{(b) Multiclass softmax cross-entropy.}
With logits $z_i=W x_i\in\mathbb{R}^K$ and labels $y_i\in\{1,\dots,K\}$, the loss
\[
L(W)=\frac{1}{n}\sum_{i=1}^n \Bigg(\log\Big(\sum_{k=1}^K e^{z_{ik}}\Big) - z_{i,y_i}\Bigg) + \lambda\,R(W)
\]
is real-analytic. Here $z_{ik}$ is the $k$-th entry of vector $z_{i}$. The map $z\mapsto \log\!\sum_k e^{z_k}$ is analytic on $\mathbb{R}^K$, and $z_i=W x_i$ is linear in $W$. 
Therefore, $L$ satisfies a local K{\L} (hence local $\alpha$–P{\L}) inequality around its critical points \citep{bolte2007lojasiewicz,kurdyka1998gradients}.

\emph{(c) Nonsmooth regularizers.}
If $R$ is nonsmooth but \emph{definable} (e.g., $R(w)=\|w\|_1$, which is semialgebraic), then $L$ remains definable in the o-minimal structure $\R_{\mathrm{an},\exp}$ (which contains restricted analytic functions and exponential functions). 
Definable functions satisfy the K{\L} property in the (sub)differential sense \citep{bolte2007lojasiewicz,kurdyka1998gradients}, so the same conclusion holds with $\|\nabla L\|$ replaced by $\mathrm{dist}(0,\partial L)$.

\item \textbf{Deep nets with ReLU with $\ell_1/\ell_2$ regularization.}
Absolute value and finite maxima of polynomials are semialgebraic, hence so are ReLU and max.\\
Semialgebraic functions/sets are closed under finite sums, products, and \emph{composition} with affine maps. Therefore each layer map
\[
x \mapsto \text{ReLU}(Ax+b),
\]
is semialgebraic, and so is their finite composition. This also implies that the network is piecewise linear (polyhedral), see \citep{montufar2014number,hanin2019complexity}.\\
The penalties $\|w\|_1$ (piecewise linear) and $\tfrac12\|w\|_2^2$ (polynomial) are semialgebraic. For squared loss (and other polynomial losses), the empirical risk
\[
\frac{1}{n}\sum_i \tfrac12 \|f_\theta(x_i)-y_i\|^2 \;+\; \lambda_1\|w\|_1+\lambda_2\tfrac12\|w\|_2^2
\]
is semialgebraic because it is a finite sum of semialgebraic functions. More generally, if the data loss uses $\exp$/$\log$ (e.g., cross-entropy), the whole objective is \emph{definable} in the o-minimal structure $\R_{\mathrm{an},\exp}$ since semialgebraic functions (the network) and restricted analytic/$\exp$/$\log$ are definable and closed under composition. Hence these objectives admit a local K{\L} (therefore local $\alpha$–P{\L}) inequality.
\item \textbf{Matrix factorization.} Objectives of the form \(\tfrac12\|UV^\top - M\|_F^2\) (with polynomial regularizers) are polynomials in the entries of \(U\) and \(V\), and hence are semialgebraic.
Semialgebraic functions are definable and satisfy a local K{\L} inequality; therefore these problems admit a local \(\alpha\)–P{\L} bound near their critical points.
This is precisely the setting used in K{\L}-based convergence analyses (e.g., PALM) \citep{bolte2014proximal,attouch2010proximal}.
\end{itemize}

\section{Why is $\alpha$ equal to $2$ at nondegenerate minima.}\label{append_d}
Let $x_\star$ be a nondegenerate local minimum of $F$, i.e., such that $\nabla F(x_\star)=0$ and $\nabla^2 F(x_\star)\succ 0$.
By continuity of the Hessian, there exist $\mu>0$ and a neighborhood $U$ of $x_\star$ such that
$\nabla^2 F(x)\succeq \mu I$ for all $x\in U$; therefore $F$ is (locally) $\mu$-strongly convex on $U$.
Local strong convexity implies the P\L{} inequality with exponent $2$:
\begin{equation}\label{eq:localPL2}
  F(x)-F(x_\star)\ \le\ \frac{1}{2\mu}\,\|\nabla F(x)\|^2\qquad \text{for all }x\in U,
\end{equation}
which is exactly the $\alpha$-P\L{} condition with $\alpha=2$ \citep[see, e.g.,][]{karimi2016linear}.

\section{Proofs of \Cref{sec:local_PL}}\label{append_sec:local PL}
For notational convenience, throughout all proofs in appendices, we write $L_{1}$ and $L_{2}$ for the
local regularity constants of $F$ on the compact neighborhood $\mathcal N(\mathcal M)$:
for all $x,y\in\mathcal N(\mathcal M)$,
\begin{align}
    &|F(x)-F(y)| \le L_{1}\,\|x-y\|\label{eq_local_lip}\\
&\|\nabla F(x)-\nabla F(y)\| \le L_{2}\,\|x-y\|.\label{eq_local_smooth}
\end{align}
\subsection{Existence of tube $\mathbb{B}_{2}(\mathcal{M};R)$ inside $N(\mathcal M)$.}\label{append_tube}
Let $\mathcal M\subset\mathbb{R}^d$ be a (nonempty) compact set and let $\mathcal N(\mathcal M)$ be any open set containing $\mathcal M$.
In \Cref{lem:tube}, we show that there exists $R>0$ such that
\[
\mathbb B_2(\mathcal M;R)\ \subset\ \mathcal N(\mathcal M) .
\]
In particular, if $F$ satisfies the local $\alpha$-P{\L} inequality on the open neighborhood $\mathcal N(\mathcal M)$, then it also holds on the metric tube $\mathbb B_2(\mathcal M;R)$ for some $R>0$.

\begin{lemma}\label{lem:tube}
Let $\mathcal{S}\subset\mathbb{R}^d$ be compact and $\mathcal{U}\subset\mathbb{R}^d$ be an open set containing $\mathcal{S}$. Define the distance to the complement
\[
d^\ast\ :=\ \inf_{x\in \mathcal{S}}\ \mathrm{dist}\bigl(x,\mathbb{R}^d\setminus \mathcal{U}\bigr)
\ =\ \inf_{x\in \mathcal{S}}\ \inf_{y\notin \mathcal{U}}\ \|x-y\|.
\]
Then $d^\ast>0$ and, for any $0<R<d^\ast$, $\mathbb B_2(S;R)\subset \mathcal{U}$.
\end{lemma}

\begin{proof}
The function $x\mapsto \mathrm{dist}\bigl(x,\mathbb{R}^d\setminus \mathcal{U}\bigr)$ is continuous and strictly positive on $\mathcal{S}$, because $\mathcal{S}\subset \mathcal{U}$ and $\mathcal{U}$ is open. By the extreme value theorem, the compact set $\mathcal{S}$ attains a positive minimum $d^\ast>0$. If $0<R<d^\ast$ and $z\in \mathbb B_2(\mathcal{S};R)$, then $\mathrm{dist}\bigl(z,\mathbb{R}^d\setminus \mathcal{U}\bigr)\ge d^\ast-R>0$, and therefore $z\in U$.
\end{proof}

\begin{remark}
Suppose that for a connected component $\mathcal M\subseteq\mathcal M$ there exists an open neighborhood $\mathcal N(\mathcal M)$ on which $F$ satisfies the local $\alpha$-P{\L} inequality \eqref{eq_local_PL}, and that $\mathcal M$ is compact. Then there exists $R>0$ with
$\mathbb B_2(\mathcal M;R)\subseteq \mathcal N(\mathcal M)$, so Assumption~\ref{assum_PL_alpha} holds with this $R$.
\end{remark}


\section{Proofs of \Cref{sec:LB_under_local_PL}}\label{append_sec:3}
\subsection{Proof of Remark \ref{remark_alpha=1_lower_bound}}\label{proof_remark_alpha=1_lower_bound}
In this part, we show that the hard instance of function in \cite[Theorem 4]{foster2019complexity} satisfies \Cref{assum_PL_alpha} for $x \in \mathbb{B}_{2}^{d}(0;R)$. 
Moreover, the set of stationary points of this function coincides with its set of global minimizers. In addition, the stochastic gradients of their worst-case function can be produced by an oracle $O\in \mathsf{O}_{\sigma}^{\tilde{L}}$. Let $m$ be the number of iterations of a given stochastic first-order algorithm. \cite[Theorem 4]{foster2019complexity} uses the following hard instance of function:
\begin{align}
\tilde{F}(x)=\frac{\sigma}{m}\sum_{i=1}^{m} \langle x,z_{i}\rangle+\frac{b}{2}\|x\|^{2},
\end{align}
where $\{z_{1},\ldots,z_{m}\}$ are orthonormal vectors in $\mathbb{R}^{d}$ ($d\ge m$) and $b={2\sigma}/(R\sqrt{m})$. $\tilde{F}$ attains its minimum at $x^{*}=-{\sigma}/(bm)\sum_{i=1}^{m}z_{i}$ which has norm $\|x^{*}\|={\sigma}/(b\sqrt{m})={R}/{2}< R$. $\tilde{F}$ satisfies $L_{1}(R)$-Lipschitzness \eqref{eq_local_lip} and $L_{2}(R)$-smoothness \eqref{eq_local_smooth} with
\[
L_{1}(R)\le \sigma+b R,\quad L_{2}(R)\le b.
\]
The stochastic gradient is as follows:
\[
\tilde{\gv}(x,z)=\sigma z+bx,
\]
where $z$ is a random variable with the uniform distribution over $\{z_{1},\ldots,z_{m}\}$. Note that $\mathbb{E}[\tilde{\gv}(x,z)]=\nabla \tilde{F}(x)$, 
\begin{align}
    \mathbb{E}[\|\tilde{\gv}(x,z)-\nabla \tilde{F}(x)\|^{2}]&=\frac{1}{m}\sum_{i=1}^{m}\mathbb{E}[\|\sigma z_{i}-\frac{\sigma}{m}\sum_{j=1}^{m}z_{j}\|^{2}]\nonumber\\
&=\sigma^{2}\left(1-\frac{1}{m}\right)\le \sigma^{2},
\end{align}
and
\[
\mathbb{E}[\|\tilde{\gv}(x,z)-\tilde{\gv}(y,z)\|^{2}]=b^{2}\|x-y\|^{2}.
\]
Therefore, the stochastic gradient $\tilde{\gv}$ satisfies the properties of $\mathsf{O}_{\sigma}^{\tilde{L}}$. 
Since $\tilde{F}$ is convex, we have
\begin{align}
    \MoveEqLeft[4]\tilde{F}(x)-\tilde{F}^{*}\le \langle \nabla \tilde{F}(x),x^{*}-x \rangle\nonumber\\
&\le \sup_{y\in \mathbb{B}_{2}^{d}(0;R)}\|x^{*}-y\|\cdot \|\nabla \tilde{F}(x)\|\le 2R \|\nabla \tilde{F}(x)\|.
\end{align}
Thus $\tilde{F}$ satisfies \Cref{assum_PL_alpha} with $\tau\ge 2R$ and then $\tilde{F}\in \mathcal{F}^{\alpha}_{L_{1},L_{2}}$. Note that 
\begin{align}
    \|\nabla \tilde{F}(x)\|^{2}&=b^{2}\|x\|^{2}+\frac{\sigma^{2}}{m}+\frac{2b\sigma}{m}\sum_{i=1}^{m}\langle x,z_{i}\rangle\nonumber\\
&=2b (\tilde{F}(x)-\tilde{F}^{*}) 
\end{align}
where $\tilde{F}^{*}=-{\sigma^{2}}/(2bm)$. \cite[Theorem 4]{foster2019complexity} proves that $\mathbb{E}[\|\nabla \tilde{F}(\hat{x})\|^{2}]\ge {\sigma^{2}}/(8m)$ where $\hat{x}$ is the output of any randomized algorithm whose input is $S=\{z_{1},\ldots,z_{m/2-1}\}$. Then
\[
\mathbb{E} [\tilde{F}(\hat{x})]-\tilde{F}^{*}=\frac{1}{2b}\mathbb{E}[\|\nabla \tilde{F}(\hat{x})\|^{2}]\ge \frac{R\sqrt{m}}{2\sigma}\cdot\frac{\sigma^{2}}{8m}=\frac{\sigma^{2}R}{16\sqrt{m}}.
\]
Therefore, when $\alpha=1$, their lower bound of $\Omega(\varepsilon^{-2})$ holds in the same setting as Theorem~\ref{lower_bound_non-convex_PL_L-avg-smooth}.

\subsection{Proof of \Cref{lower_bound_non-convex_PL_L-avg-smooth}}\label{proof_sec:3.2}

\begin{proof}[Proof of Theorem \ref{lower_bound_non-convex_PL_L-avg-smooth}]
Let $\mathcal{F}^{\text{uni}}_{\alpha}$ be a subset of $\mathcal{F}_{\alpha}$ such that every $f\in \mathcal{F}^{\text{uni}}_{\alpha}$ has a unique local minimizer. For two functions $f_{0}$ and $f_{1}$ in $\mathcal{F}^{\text{uni}}_{\alpha}$, let us define $\delta(f_{0},f_{1}):=\|x_{f_{1}}^{*}-x_{f_{0}}^{*}\|$ where $x_{f_{i}}^{*}=\arg\min_{x\in \mathbb{R}^{d}}f_{i}(x)$ for $i\in\{0,1\}$. For a fixed algorithm $\mathsf{A}\in\mathcal{A}_{T}$, let $x_{T}$ be the output of the $T$-th iteration of $\mathsf{A}$ and $\hat{F}_{T}$ be a function in $\mathcal{F}^{\text{uni}}_{\alpha}$ whose minimizer is $x_{T}$. 

\noindent 
\citep[Proposition 2.2]{rebjock2024fast} implies then that for $F\in \mathcal{F}^{\text{uni}}_{\alpha}$, there exists a neighborhood of $x_{F}^{*}$,  $\tilde{\mathcal{N}}(x_{F}^{*})\subseteq\mathcal{N}(x_{F}^{*})$ such that $\lambda\cdot\|x-x_{F}^{*}\|^{\alpha/(\alpha-1)}\le F(x)-F(x_{F}^{*})$ for all $x\in \tilde{\mathcal{N}}(x_{F}^{*})$, where $\lambda=\left((\alpha-1)/{\alpha}\right)^{{\alpha}/(\alpha-1)}\tau^{-{1}/(\alpha-1)}$ and $x^{*}_{F}=\arg\min_{x\in\mathbb{R}^{d}}F(x)$.\\ 
We assume that for every algorithm $\mathsf{A}\in \mathcal{A}_{T}$, there are two subsets $\mathcal{U}_{0}^{F}$ and $\mathcal{U}^{F}$ of $\tilde{\mathcal{N}}(x^{*}_{F})$ such that if $x_{0}\in \mathcal{U}_{0}^{F}$ we have $\{x_{t}\}_{t=1}^{T}$ lie in $\mathcal{U}^{F}$ with high probability $1-\delta$, i.e., $\mathsf{A}\in\mathcal{A}_T(\mathcal{U}_0^{F}, \mathcal{U}^{F},\delta)$. We define the following event:
\[
\mathcal{E}_{m}(\mathcal{U}^{F}):=\left\{\{x_{t}\}_{t=1}^{m}\in \mathcal{U}^{F}\right\}.
\]
Conditioned on $\mathcal{E}_{m}(\mathcal{U}^{F})$, for $0<\rho<1/2$, we obtain
\begin{align}
\MoveEqLeft[4]\sup_{F\in\mathcal{F}^{\text{uni}}_{\alpha}}\inf_{\mathsf{A}\in\mathcal{A}_m(\mathcal{U}_0^{F}, \mathcal{U}^{F},\delta)}\mathbb{E}[F(x_{m})\mid\mathcal{E}_{m}(\mathcal{U}^{F})]-F(x_{F}^{*})\nonumber\\
&\ge \lambda\cdot \sup_{F\in\mathcal{F}^{\text{uni}}_{\alpha}}\inf_{\mathsf{A}\in\mathcal{A}_m(\mathcal{U}_0^{F}, \mathcal{U}^{F},\delta)}\mathbb{E}\left[\|x_{m}-x^{*}_{F}\|^{\frac{\alpha}{\alpha-1}}\mid \mathcal{E}_{m}(\mathcal{U}^{F})\right]\nonumber\\
&\overset{(a)}{\ge} \lambda\cdot \left(\sup_{F\in\mathcal{F}^{\text{uni}}_{\alpha}}\inf_{\mathsf{A}\in\mathcal{A}_m(\mathcal{U}_0^{F}, \mathcal{U}^{F},\delta)}\mathbb{E}\left[\|x_{m}-x^{*}_{F}\|\mid \mathcal{E}_{m}(\mathcal{U}^{F})\right]\right)^{\frac{\alpha}{\alpha-1}}\nonumber\\
  &\overset{(b)}{\ge} \lambda\cdot\left(\frac{\rho}{2}\cdot\sup_{F\in\mathcal{F}^{\text{uni}}_{\alpha}}\inf_{\mathsf{A}\in\mathcal{A}_m(\mathcal{U}_0^{F}, \mathcal{U}^{F},\delta)}\mathbb{P}\left[\delta(\hat{F}_{m},F)>\frac{\rho}{2}\mid \mathcal{E}_{m}(\mathcal{U}^{F})\right]\right)^{\frac{\alpha}{\alpha-1}},\label{B4}
\end{align}
where (a) comes from Jensen's inequality, and (b) from Markov's inequality and $\delta(\hat{F}_{m},F)=\|x_{m}-x^{*}_{F}\|$ as $\hat{F}_{m}$ is a function in $\mathcal{F}^{\text{uni}}_{\alpha}$ whose minimizer is $x_{m}$.


\noindent
In order to give a lower bound on \eqref{B4}, we use Fano's inequality given in the following lemma.
\begin{lemma}\label{2_hypo_fano_ineq}\cite[Theorem 2.5]{10.5555/1522486}
    Let $\mathcal{F}$ be a non-parametric class of functions, $\delta(\cdot,\cdot):\mathcal{F}\times \mathcal{F}\to \mathbb{R}$ be a semi-distance\footnote{$\delta(\cdot,\cdot)$ is a semi-distance if it satisfies the symmetry property and the triangle inequality but not the separation property (i.e., for every $f,g\in\mathcal{F}$, $\delta(f,g)=0\,\Leftrightarrow\, f=g$).}, and $\{P_{f}:f\in \mathcal{F}\}$ be a family of probability distribution indexed by $f\in \mathcal{F}$. Assume that there are $f_{0},f_{1}\in\mathcal{F}$ such that $\delta(f_{0},f_{1})\ge \rho>0$ and $KL(P_{f_{0}}\|P_{f_{1}})\le \gamma$ for some $\gamma>0$. Then,
    \begin{align}
\sup_{f\in\mathcal{F}}\inf_{\hat{f}}P_{f}\left(\left\{\delta(\hat{f},f)>\frac{\rho}{2}\right\}\right)\ge \max\left\{\frac{e^{-\gamma}}{4},\frac{1-\sqrt{\gamma/2}}{2}\right\},
    \end{align}
    where $\hat{f}$ is an estimator of $f$ from samples generated by $P_{f}$.
\end{lemma}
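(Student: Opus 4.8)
The plan is to prove the statement by Le Cam's two-point method, exactly as in the proof of Tsybakov's Theorem~2.5 that the lemma cites: first reduce the minimax estimation risk to the minimax error of a binary test between $P_{f_0}$ and $P_{f_1}$, and then convert the hypothesis $KL(P_{f_0}\|P_{f_1})\le\gamma$ into the two lower bounds on that testing error. It suffices to work with the two-point subfamily $\{f_0,f_1\}\subseteq\mathcal F$, since lower-bounding the worst-case risk over these two functions lower-bounds the worst-case risk over all of $\mathcal F$. I would read the $\sup$--$\inf$ in the usual minimax order $\inf_{\hat f}\sup_{f}$; with the literal order $\sup_f\inf_{\hat f}$ the left-hand side is trivially zero (take $\hat f\equiv f$), so the intended, and citable, statement is the minimax one.

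First I would carry out the estimation-to-testing reduction. Given any estimator $\hat f$ (a measurable function of the data), associate the minimum-distance test $\psi:=\argmin_{j\in\{0,1\}}\delta(\hat f,f_j)$, ties broken arbitrarily. The deterministic core is a triangle-inequality argument: if $\psi\ne j$ then $\delta(\hat f,f_{1-j})\le\delta(\hat f,f_j)$, and since $\delta$ is a semi-distance with $\delta(f_0,f_1)\ge\rho$ we get $\rho\le\delta(\hat f,f_0)+\delta(\hat f,f_1)\le 2\,\delta(\hat f,f_j)$, i.e. $\delta(\hat f,f_j)\ge\rho/2$. Hence $\{\psi\ne j\}\subseteq\{\delta(\hat f,f_j)\ge\rho/2\}$ for $j=0,1$, so $\max_{j}P_{f_j}(\delta(\hat f,f_j)\ge\rho/2)\ge\max_j P_{f_j}(\psi\ne j)\ge p_{e,1}$, where $p_{e,1}:=\inf_{\psi}\max_{j}P_{f_j}(\psi\ne j)$ is the minimax test error. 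Taking $\inf_{\hat f}$ gives $\inf_{\hat f}\sup_{f\in\mathcal F}P_f(\delta(\hat f,f)>\rho/2)\ge p_{e,1}$; the strict threshold $>\rho/2$ versus the $\ge\rho/2$ above costs nothing, as it can be absorbed by running the argument with any $\rho'<\rho$.

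Second I would lower-bound $p_{e,1}$ in two ways. Let $p_0,p_1$ be densities of $P_{f_0},P_{f_1}$ against a dominating measure $\mu$. Using $\max(a,b)\ge(a+b)/2$ and the fact that the minimum total error of a test equals $\inf_\psi[P_{f_0}(\psi=1)+P_{f_1}(\psi=0)]=\int\min(p_0,p_1)\,d\mu=1-\|P_{f_0}-P_{f_1}\|_{\mathrm{TV}}$, I get $p_{e,1}\ge\tfrac12\int\min(p_0,p_1)\,d\mu$. Route~(i): Pinsker's inequality gives $\|P_{f_0}-P_{f_1}\|_{\mathrm{TV}}\le\sqrt{KL(P_{f_0}\|P_{f_1})/2}\le\sqrt{\gamma/2}$, whence $p_{e,1}\ge\tfrac12(1-\sqrt{\gamma/2})$. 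Route~(ii): by Cauchy--Schwarz $\big(\int\sqrt{p_0p_1}\big)^2\le\int\min(p_0,p_1)\cdot\int\max(p_0,p_1)\le 2\int\min(p_0,p_1)$, and by Jensen's inequality applied to $\int\sqrt{p_0p_1}\,d\mu=\mathbb{E}_{P_{f_0}}[\sqrt{p_1/p_0}]$ one has $\int\sqrt{p_0p_1}\,d\mu\ge\exp(-\tfrac12 KL(P_{f_0}\|P_{f_1}))\ge e^{-\gamma/2}$; combining the two, $p_{e,1}\ge\tfrac12\int\min(p_0,p_1)\,d\mu\ge\tfrac14 e^{-\gamma}$. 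Taking the maximum of the bounds from routes (i) and (ii) yields the claim.

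I expect the only nontrivial step to be route~(ii): the affinity bound $\int\sqrt{p_0p_1}\,d\mu\ge e^{-\gamma/2}$ (a Bretagnolle--Huber-type estimate obtained here through Jensen) together with the min-affinity inequality $\int\min\ge\tfrac12(\int\sqrt{p_0p_1})^2$ are what turn a one-sided KL bound into a genuine two-sided error guarantee, and they must be set up with the correct direction of the divergence. Everything else, namely the reduction to testing and the TV/Pinsker route, is routine, and the remaining bookkeeping (strict vs.\ non-strict threshold, and the $\sup$--$\inf$ ordering) is handled as indicated above.
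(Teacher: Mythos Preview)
The paper does not supply its own proof of this lemma; it simply cites Tsybakov's Theorem~2.5 and uses the conclusion as a black box inside the proof of Theorem~\ref{lower_bound_non-convex_PL_L-avg-smooth}. Your proposal reproduces the standard Le Cam two-point argument from Tsybakov (reduction to a binary test via the triangle inequality on the semi-distance, followed by the Pinsker and the affinity/Bretagnolle--Huber routes), and each step is correct, including the Jensen step $\log\int\sqrt{p_0p_1}\,d\mu\ge \mathbb{E}_{P_{f_0}}\big[\tfrac12\log(p_1/p_0)\big]=-\tfrac12 KL(P_{f_0}\|P_{f_1})$. Your remark about the $\sup$--$\inf$ order is also well taken: the inequality is only meaningful in the minimax order $\inf_{\hat f}\sup_f$, which is how the paper actually uses it in \eqref{B09}.
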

 In order to apply Lemma \ref{2_hypo_fano_ineq}, we need to specify $f_{0},f_{1}\in \mathcal{F}^{\text{uni}}_{\alpha}$ and corresponding $P_{f_{0}}, P_{f_{1}}$ such that $\delta(f_{0},f_{1})\ge \rho$ and $KL(P_{f_{0}}\|P_{f_{1}})\le \gamma$. 
 
\noindent
 \textbf{Construction of $f_{0},f_{1}$:}  We construct two continuously differentiable 1-dimensional functions $f_{0},f_{1}:\mathbb{R}\to \mathbb{R}$ as follows:
 \begin{align}
        &f_{0}(x)=\begin{cases}
         C|x|^{\frac{\alpha}{\alpha-1}}\quad &-R\le x\le R\\
         C\frac{\alpha}{\alpha-1}R^{\frac{1}{\alpha-1}}x+D\quad &R<x\\
         -C\frac{\alpha}{\alpha-1}R^{\frac{1}{\alpha-1}}x+D\quad&x<-R
     \end{cases},\label{B006}\\
     &f_{1}(x)=\begin{cases}
     2^{\frac{1}{\alpha-1}}C(|x-\rho|^{\frac{\alpha}{\alpha-1}}+|\rho|^{\frac{\alpha}{\alpha-1}})\quad &0\le x\le 2\rho\\
     f_{0}(x)\quad &2\rho\le x\\
     -\frac{\alpha}{\alpha-1}2^{\frac{1}{\alpha-1}}C\rho^{\frac{1}{\alpha-1}}x+2^{\frac{\alpha}{\alpha-1}}C\rho^{\frac{\alpha}{\alpha-1}}\quad &x\le 0, \label{B007}
     \end{cases}
 \end{align}
 where $C>0$ is a constant and $D=-(\alpha-1)^{-1}CR^{{\alpha}/(\alpha-1)}$.
 We choose $C$ as follows:
\begin{align}\label{choice_C}
    C=\tau^{-\frac{1}{\alpha-1}}\left(\frac{\alpha-1}{\alpha}\right)^{\frac{\alpha}{\alpha-1}}.
\end{align}
Based on \Cref{lem_pl_imply_tube_r}, for every function $F\in\mathcal{F}_{\alpha}^{\text{uni}}$, $\mathcal{U}_{0}^{F},\mathcal{U}^{F}\subseteq\mathbb{B}_{2}(x_{F}^{*};R)$. Let the neighborhoods of the minimizers of $f_{0}$ and $f_{1}$ be defined as 
$\tilde{\mathcal{N}}_{f_{0}}(0) = [-R, R]$ and $\tilde{\mathcal{N}}_{f_{1}}(\rho) = [0, R]$, respectively. Then $\mathcal{U}_{0}^{f_{i}},\mathcal{U}^{f_{i}}$ attributed to $f_{i}$, are subset of $\tilde{\mathcal{N}}_{f_{i}}(x_{f_{i}}^{*})$ for $i\in\{0,1\}$. We also pick $\rho$ such that $2\rho<R$.\\
In Lemma \ref{Lemm_PL_alpha_L_lip_f01} (refer to Appendix \ref{append_lower_bound_non-convex_PL_L-avg-smooth}),  we prove that $f_{0},f_{1}\in \mathcal{F}^{\text{uni}}_{\alpha}$ with the following constants for smoothness and Lipschitzness of both functions in their own neighborhoods:
\begin{equation}\label{B8}
\begin{aligned}
     &L_{2}(R)\ge \frac{(\alpha-1)^{\frac{2-\alpha}{\alpha-1}}}{(\alpha\tau)^{\frac{1}{\alpha-1}}}R^{\frac{2-\alpha}{\alpha-1}},\\
     &L_{1}(R)\ge \left(\frac{\alpha-1}{\alpha}\right)^{\frac{1}{\alpha-1}}\left(\frac{R}{\tau}\right)^{\frac{1}{\alpha-1}}.
\end{aligned}
\end{equation}
\noindent
 \textbf{Specification of the oracle:} We first specify the oracle $O^{*}$, needed to define $P_{f_{0}}$ and $P_{f_{1}}$, and which simply adds a standard normal noise to the gradient values. Let $f\in \mathcal{F}^{\text{uni}}_{\alpha}$. Then
 \begin{align}
     O^{*}(x)=(f'(x)+Z),
 \end{align}
 where $Z$ is a zero-mean normal noise with variance $\sigma^{2}$. Therefore, $O^{*}\in \mathsf{O}^{\tilde{L}}_{\sigma}$ as $f'(x,Z):=f'(x)+Z$ is unbiased, $\mathbb{E}[|f'(x,Z)-\mathbb{E}[f'(x,Z)]|^{2}]= \sigma^{2}$, and this oracle is $\tilde{L}$-average smooth with $\tilde{L}=L_{2}(R)$,
\begin{align*}
     \mathbb{E}[|f'(x,Z)-f'(y,Z)|^{2}]&=|f'(x)-f'(y)|^{2}\nonumber\\
 &\le L_{2}(R)^{2}|x-y|^{2}.
\end{align*}
\noindent 
\textbf{Specification of $P_{f_{0}}$ and $P_{f_{1}}$:} 
 For $i\in\{0,1\}$, $P^{m}_{f_{i}}$ denotes the distribution of $\{X_{t},f'_{i}(X_{t},Z_{t})\}_{t=1}^{m}$ where $X_{t}$ denotes the output of stochastic first-order algorithm $\mathsf{A}$ at iteration $t$.
 \begin{lemma}\label{claim_KL}
     Let $P^{m}_{f_{i}}$ be the distribution of $\{X_{t},f'_{i}(X_{t},Z_{t})\}_{t=1}^{m}$ for $i=\{0,1\}$ and $f_{0},f_{1}$ are defined in \eqref{B006} and \eqref{B007}, respectively. Then for $0<\rho\le 1/2$, we have
     \[
     KL(P^{m}_{f_{0}}\|P^{m}_{f_{1}})=\mathcal{O}\left(\frac{C^{2}m}{\sigma^{2}}\left(\frac{\alpha}{\alpha-1}\right)^{2}\rho^{\frac{2}{\alpha-1}}\right).
     \]
 \end{lemma}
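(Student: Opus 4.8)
The plan is to bound the KL divergence between the two path-measures $P^m_{f_0}$ and $P^m_{f_1}$ by exploiting the sequential (feedback) structure of the interaction between the algorithm $\mathsf{A}$ and the oracle $O^*$. Since the algorithm is deterministic given the oracle's past responses (or, for randomized algorithms, we condition on the internal randomness, which is independent of the function index $i$), the distribution of the query $X_t$ at step $t$ depends on $f_i$ only through the previous noisy gradient observations $\{f'_i(X_s,Z_s)\}_{s<t}$. Therefore, by the chain rule for KL divergence,
\[
KL(P^m_{f_0}\|P^m_{f_1})=\sum_{t=1}^{m}\mathbb{E}_{P^m_{f_0}}\left[KL\Big(\mathcal{N}\big(f'_0(X_t),\sigma^2\big)\,\Big\|\,\mathcal{N}\big(f'_1(X_t),\sigma^2\big)\Big)\right]=\sum_{t=1}^{m}\mathbb{E}_{P^m_{f_0}}\left[\frac{\big(f'_0(X_t)-f'_1(X_t)\big)^2}{2\sigma^2}\right],
\]
using that the query point $X_t$ contributes no divergence (its conditional law is identical under both measures) and that the KL divergence between two unit-variance-scaled Gaussians with means $a,b$ equals $(a-b)^2/(2\sigma^2)$.

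Next I would bound the pointwise discrepancy $\sup_{x\in[0,R]}|f'_0(x)-f'_1(x)|$ of the derivatives. From the definitions \eqref{B006} and \eqref{B007}, $f_0$ and $f_1$ agree for $x\ge 2\rho$, so the derivatives differ only on the interval $[0,2\rho]$ (together with the left extension $x\le 0$, but the algorithm's iterates are projected onto $\mathcal{X}=[0,R]$, so only $x\in[0,2\rho]$ matters). On $[0,2\rho]$ we have $f_0'(x)=C\frac{\alpha}{\alpha-1}\,\mathrm{sgn}(x)|x|^{\frac{1}{\alpha-1}}=C\frac{\alpha}{\alpha-1}x^{\frac{1}{\alpha-1}}$ and $f_1'(x)=2^{\frac{1}{\alpha-1}}C\frac{\alpha}{\alpha-1}\,\mathrm{sgn}(x-\rho)|x-\rho|^{\frac{1}{\alpha-1}}$. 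Both terms are bounded in absolute value by $2^{\frac{1}{\alpha-1}}C\frac{\alpha}{\alpha-1}\rho^{\frac{1}{\alpha-1}}$ on this interval (since $|x|\le 2\rho$ and $|x-\rho|\le\rho$), so by the triangle inequality
\[
\sup_{x\in[0,R]}\big|f'_0(x)-f'_1(x)\big|\le 2\cdot 2^{\frac{1}{\alpha-1}}C\frac{\alpha}{\alpha-1}\rho^{\frac{1}{\alpha-1}}=\mathcal{O}\!\left(C\,\frac{\alpha}{\alpha-1}\,\rho^{\frac{1}{\alpha-1}}\right).
\]
Substituting this uniform bound into the chain-rule expression collapses the expectation and the sum into a factor of $m$, yielding
\[
KL(P^m_{f_0}\|P^m_{f_1})\le \frac{m}{2\sigma^2}\left(2\cdot 2^{\frac{1}{\alpha-1}}C\frac{\alpha}{\alpha-1}\rho^{\frac{1}{\alpha-1}}\right)^2=\mathcal{O}\!\left(\frac{C^2 m}{\sigma^2}\left(\frac{\alpha}{\alpha-1}\right)^{2}\rho^{\frac{2}{\alpha-1}}\right),
\]
which is exactly the claimed bound.

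The main obstacle — and the step requiring the most care — is the first one: justifying rigorously that the query point $X_t$ contributes nothing to the KL divergence and that the chain rule decomposes cleanly. This needs the observation that, under the feedback model, the joint density of $(X_1,Y_1,\dots,X_m,Y_m)$ (with $Y_t=f'_i(X_t,Z_t)$) factors as a product of the algorithm's (function-independent) transition kernels $\mathsf{A}_t(\cdot\mid Y_1,\dots,Y_{t-1})$ times the oracle's Gaussian kernels $\mathcal{N}(f'_i(X_t),\sigma^2)$; the algorithm factors cancel in the log-likelihood ratio, leaving only the sum of Gaussian KL terms. One should also note that the bound on $|f_0'-f_1'|$ is uniform over the relevant domain, so no control of the (random, algorithm-dependent) distribution of $X_t$ is actually needed beyond knowing $X_t\in[0,R]$ almost surely — this is what makes the expectation trivial. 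A minor point is handling the edge behavior at $x=0$ and $x=2\rho$ where the piecewise definitions meet; continuity of $f_0',f_1'$ there (established in Lemma~\ref{Lemm_PL_alpha_L_lip_f01}) ensures the sup bound holds on the closed interval.
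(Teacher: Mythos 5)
Your proof is correct and follows essentially the same route as the paper: the chain-rule decomposition with cancellation of the algorithm's (function-independent) transition kernels, the Gaussian KL formula, and a uniform bound on $|f_0'-f_1'|$ over $[0,2\rho]$ multiplied by $m$. The only cosmetic difference is that you bound the derivative gap by the triangle inequality (costing a harmless factor of $2$), whereas the paper evaluates the maximum of $|f_0'-f_1'|$ exactly at $x=0$; both yield the stated $\mathcal{O}\!\left(\frac{C^{2}m}{\sigma^{2}}\left(\frac{\alpha}{\alpha-1}\right)^{2}\rho^{\frac{2}{\alpha-1}}\right)$ bound.
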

The proof of Lemma~\ref{claim_KL} is given in Appendix~\ref{append_lower_bound_non-convex_PL_L-avg-smooth}. Lemma~\ref{claim_KL} shows that one can pick $\gamma=1/2$ if $\rho=\Theta\left(m^{-(\alpha-1)/{2}}\left({\sigma}/{C}\right)^{\alpha-1}\left((\alpha-1)/{\alpha}\right)^{\alpha-1}\right)$. We set therefore $\gamma$ and $\rho$ to these values in Lemma~\ref{claim_KL} so that $ KL(P^{m}_{f_{0}}\|P^{m}_{f_{1}})\le 1/2$. Hence, given $\delta(f_{0},f_{1})\ge \rho$, 
Lemma~\ref{2_hypo_fano_ineq} implies that
\begin{align}\label{B09}
\sup_{F\in\mathcal{F}^{\text{uni}}_{\alpha}}\inf_{\mathsf{A}\in\mathcal{A}_m(\mathcal{U}_0^{F}, \mathcal{U}^{F},\delta)}\mathbb{P}\left[\delta(\hat{F}_{m},F)>\frac{\rho}{2}\right]\ge \frac{1}{4}.
\end{align}

We return to \eqref{B4}, and finish the proof by plugging \eqref{B09} in \eqref{B4} to get
\begin{align}
\MoveEqLeft[4]\sup_{F\in\mathcal{F}^{\text{uni}}_{\alpha}}\inf_{\mathsf{A}\in\mathcal{A}_m(\mathcal{U}_0^{F}, \mathcal{U}^{F},\delta)}\mathbb{E}[F(x_{m})\mid\mathcal{E}_{m}(\mathcal{U}^{F})]-F(x_{F}^{*})\nonumber\\
  &\ge \lambda\left(\frac{\rho}{2}\cdot\sup_{F\in\mathcal{F}^{\text{uni}}_{\alpha}}\inf_{\mathsf{A}\in\mathcal{A}_m(\mathcal{U}_0^{F}, \mathcal{U}^{F},\delta)}\mathbb{P}\left[\delta(\hat{F}_{m},F)>\frac{\rho}{2}\right]\right)^{\frac{\alpha}{\alpha-1}}\nonumber\\
  &\overset{(c)}{\ge} \lambda\left[\Omega\left(\frac{1}{m^{\frac{\alpha-1}{2}}}\left(\frac{\sigma}{C}\right)^{\alpha-1}\left(\frac{\alpha-1}{\alpha}\right)^{\alpha-1}\right)\right]^{\frac{\alpha}{\alpha-1}}\nonumber\\
  &=\Omega\left(\frac{\lambda\sigma^{\alpha}\left(\frac{\alpha-1}{\alpha}\right)^{\alpha}}{C^{\alpha}m^{\frac{\alpha}{2}}}\right)\overset{(d)}{=}\Omega\left(\frac{\tau\sigma^{\alpha}}{m^{\frac{\alpha}{2}}}\right)\label{B11}
\end{align}
where (c) follows from \eqref{B09} and $\rho=\Theta\left(m^{-(\alpha-1)/{2}}\left({\sigma}/{C}\right)^{\alpha-1}\left((\alpha-1)/{\alpha}\right)^{\alpha-1}\right)$. Equation (d) results from the choices of $\lambda=\left((\alpha-1)/{\alpha}\right)^{{\alpha}/(\alpha-1)}\tau^{-{1}/(\alpha-1)}$ and $C$ in Equation~\eqref{choice_C}.
From \eqref{B11}, $\ms_{\varepsilon}(\mathcal{F}^{\mathcal{X}}_{\alpha,\tau,L},O^{*})=\Omega\left({\tau^{{2}/{\alpha}}\sigma^{2}}/{\varepsilon^{{2}/{\alpha}}}\right)$, which concludes the proof.
\end{proof}
\subsection{Proof of Lemma \ref{claim_KL}}\label{append_lower_bound_non-convex_PL_L-avg-smooth}
Let $X_{i}, Y_{i}$ denote the updating point and the gradient sample observed at iteration $i$ of the stochastic first-order algorithm $\mathsf{A}$, respectively.
Note that 
 \begin{align}\label{eq_B.0010}
 P_{f_{i}}^{m}(Y_{t}|X_{t}=x)=\mathbb{P}(f'_{i}(X_{t},Z_{t})|X_{t}=x)=\mathcal{N}(f'_{i}(x),\sigma^{2}).
 \end{align}
Let us define $X^{m}:=\{X_{i}\}_{i=1}^{m},Y^{m}:=\{Y_{i}\}_{i=1}^{m}$. Then, we have:
    \begin{align}
    KL(P_{f_{1}}^{m}\|P_{f_{0}}^{m})&=\mathbb{E}_{P_{f_{1}}^{m}}\left[\log \frac{P_{f_{1}}^{m}(X^{m},Y^{m})}{P_{f_{0}}^{m}(X^{m},Y^{m})}\right]\nonumber\\
        &=\mathbb{E}_{P_{f_{1}}^{m}}\left[\log \frac{\prod_{t=1}^{m}P_{f_{1}}^{m}(Y_{t}|X_{t})\cdot P(X_{t}|X^{t-1},Y^{t-1})}{\prod_{t=1}^{m}P_{f_{0}}^{m}(Y_{t}|X_{t})\cdot P(X_{t}|X^{t-1},Y^{t-1})}\right]\label{B.11}\\
        &=\mathbb{E}_{P_{f_{1}}^{m}}\left[\log \frac{\prod_{t=1}^{m}P_{f_{1}}^{m}(Y_{t}|X_{t})}{\prod_{t=1}^{m}P_{f_{0}}^{m}(Y_{t}|X_{t})}\right]\nonumber\\
        &=\sum_{t=1}^{m}\mathbb{E}_{X_{t}\sim P_{f_{1}}^{m}}\left[\mathbb{E}_{P_{f_{1}}^{m}}\left[\log \frac{P_{f_{1}}^{m}(Y_{t}|X_{t})}{P_{f_{0}}^{m}(Y_{t}|X_{t})}\Bigg|X_{t}\right]\right]\nonumber\\
        &\le m \cdot \max_{x\in\mathcal{N}_{f_{1}}(\rho)}\mathbb{E}_{P_{f_{1}}^{m}}\left[\log \frac{P_{f_{1}}^{m}(Y_{t}|X_{t})}{P_{f_{0}}^{m}(Y_{t}|X_{t})}\Bigg|X_{t}=x\right]\label{B.111}\\
        &=\frac{m}{2\sigma^{2}}\left(\max_{x\in\mathcal{N}_{f_{1}}(\rho)}|f'_{0}(x)-f'_{1}(x)|^{2}\right)\label{B.12}\\
        &=\frac{C^{2}m}{2\sigma^{2}}\left(\frac{\alpha}{\alpha-1}\right)^{2}\cdot\left[\max_{x\in[0,2\rho]}\left(2^{\frac{1}{\alpha-1}}|x-\rho|^{\frac{1}{\alpha-1}}\text{sgn}(x-\rho)-x^{\frac{1}{\alpha-1}}\right)^{2}\right]\label{B112}\\
        &=\mathcal{O}\left(\frac{C^{2}m}{\sigma^{2}}\left(\frac{\alpha}{\alpha-1}\right)^{2}\rho^{\frac{2}{\alpha-1}}\right),\label{B12}
    \end{align}
    where \eqref{B.11} comes from the fact that given $(X^{t-1},Y^{t-1})$, stochastic first-order algorithm's updated point $X_{t}$ is independent of the choice of the objective function. Equation \eqref{B.111} follows by $\text{supp}(P_{X_{t}})=\mathcal{U}_{0}^{f_{1}}\subseteq\mathcal{N}_{f_{1}}(\rho)$ when $X^{m}\sim P_{f_{1}}^{m}$, Equation \eqref{B.12} follows from \eqref{eq_B.0010}, and \eqref{B112} from the construction of $f_{0}$ (refer to \eqref{B006}) and of $f_{1}$ (refer to \eqref{B007}). In \eqref{B12}, we use the fact that $x=0$ achieves the maximum value in \eqref{B112}.
\begin{lemma}\label{Lemm_PL_alpha_L_lip_f01}
    Functions $f_{0}$ and $f_{1}$, defined in \eqref{B006} and \eqref{B007}, are elements of $\mathcal{F}^{\alpha,\text{uni}}_{L_{1},L_{2}}$ with $L_{2}\ge C{\alpha}{(\alpha-1)^{-2}}R^{(2-\alpha)/(\alpha-1)}$ and $ \tau\ge C^{1-\alpha}\left((\alpha-1)/{\alpha}\right)^{\alpha}$.
\end{lemma}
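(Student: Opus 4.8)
The plan is to verify directly the three defining properties of the class $\mathcal{F}^{\mathcal{X},\text{uni}}_{\alpha,\tau,L}$ (with $\mathcal{X}=[0,R]$) for each of $f_0,f_1$: continuous differentiability together with global $L$-smoothness on $\mathbb{R}$, the $(\alpha,\tau,\mathcal{X})$-projected-gradient-dominance of Assumption~\ref{assum_gen_PL_alpha}, and the existence of a unique minimizer in $\mathcal{X}$. The structural observation that organizes the whole argument is that both functions are assembled from the atom $\phi_{c,x^{*}}(x):=c\,|x-x^{*}|^{\frac{\alpha}{\alpha-1}}$ together with affine tails: on the relevant interval $f_0=\phi_{C,0}$, while $f_1=\phi_{2^{1/(\alpha-1)}C,\rho}+\text{const}$ on $[0,2\rho]$ and $f_1=\phi_{C,0}$ on $[2\rho,R]$. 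Since $\alpha\in(1,2]$ gives $\tfrac{\alpha}{\alpha-1}\ge 2$, the atom is $C^{2}$ with $\phi_{c,x^{*}}'(x)=c\tfrac{\alpha}{\alpha-1}|x-x^{*}|^{\frac{1}{\alpha-1}}\sgn(x-x^{*})$ and $\phi_{c,x^{*}}''(x)=c\tfrac{\alpha}{(\alpha-1)^{2}}|x-x^{*}|^{\frac{2-\alpha}{\alpha-1}}$, and it obeys the two identities used throughout: $\phi_{c,x^{*}}(x)-\phi_{c,x^{*}}(x^{*})=c|x-x^{*}|^{\frac{\alpha}{\alpha-1}}$ and $|\phi_{c,x^{*}}'(x)|^{\alpha}=c^{\alpha}(\tfrac{\alpha}{\alpha-1})^{\alpha}|x-x^{*}|^{\frac{\alpha}{\alpha-1}}$, so the ratio of the first to the second is the constant $c^{1-\alpha}(\tfrac{\alpha-1}{\alpha})^{\alpha}$, which is nonincreasing in $c$.

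First I would dispatch smoothness and the unique minimizer. Continuity and matching of first derivatives at the break points $x=\pm R$ for $f_0$ (using $D=-(\alpha-1)^{-1}CR^{\frac{\alpha}{\alpha-1}}$ and $\tfrac1{\alpha-1}+1=\tfrac{\alpha}{\alpha-1}$) and at $x\in\{0,2\rho\}$ for $f_1$ is a direct computation, so $f_0,f_1\in C^{1}(\mathbb{R})$. For $L$-smoothness, $f_i'$ is continuous and piecewise $C^{1}$, so it is Lipschitz with constant $\sup|f_i''|$; the affine tails contribute $0$, and on an atom piece $|\phi_{c,x^{*}}''|$ increases with the distance to the vertex $x^{*}$, hence is bounded by its value at the far endpoint. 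For $f_0$ this bound is exactly $C\tfrac{\alpha}{(\alpha-1)^{2}}R^{\frac{2-\alpha}{\alpha-1}}$; for $f_1$ the piece $[2\rho,R]$ gives the same bound, and the piece $[0,2\rho]$ gives $2^{1/(\alpha-1)}C\tfrac{\alpha}{(\alpha-1)^{2}}\rho^{\frac{2-\alpha}{\alpha-1}}$, which is $\le C\tfrac{\alpha}{(\alpha-1)^{2}}R^{\frac{2-\alpha}{\alpha-1}}$ as soon as $2^{1/(2-\alpha)}\rho\le R$ — a harmless smallness condition on $\rho$ that holds in the regime where this lemma is invoked (there $\rho\to 0$). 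Thus $L\ge C\tfrac{\alpha}{(\alpha-1)^{2}}R^{\frac{2-\alpha}{\alpha-1}}$ makes both $f_i'$ Lipschitz with that constant. For the minimizer: on $[0,R]$, $f_0=\phi_{C,0}$ is strictly increasing, so $0$ is its unique minimizer; $f_1$ strictly decreases on $[0,\rho]$ and strictly increases on $[\rho,R]$, and $f_1(\rho)=2^{1/(\alpha-1)}C\rho^{\frac{\alpha}{\alpha-1}}<C(2\rho)^{\frac{\alpha}{\alpha-1}}=f_1(2\rho)\le f_1(x)$ for $x\in[2\rho,R]$, so $\rho$ is its unique minimizer in $\mathcal{X}$ — which also yields $\delta(f_0,f_1)=\rho$ as used in the main proof.

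Next I would treat projected-gradient-dominance. Fix $\eta_0\le 1/(2L)$. For $0<\eta\le\eta_0$ and $x\in[0,R]$, I claim $x-\eta F'(x)\in[0,R]$, so the projection in \eqref{def_G_eta} is inactive and $\mathcal{G}_{\eta,\mathcal{X}}(x)=F'(x)$. Writing $x^{*}$ for the relevant vertex, one has $|F'(x)|=c\tfrac{\alpha}{\alpha-1}|x-x^{*}|^{\frac{1}{\alpha-1}}$ and, on each atom piece, $c\tfrac{\alpha}{\alpha-1}|x-x^{*}|^{\frac{2-\alpha}{\alpha-1}}\le(\alpha-1)\cdot(\text{that piece's second-derivative bound})\le(\alpha-1)L\le L$; hence $\eta|F'(x)|\le\eta L\,|x-x^{*}|\le\tfrac12|x-x^{*}|$, and since $F'$ points from $x$ toward $x^{*}$, the step does not overshoot $x^{*}$ and stays in $[0,R]$. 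With $\mathcal{G}_{\eta,\mathcal{X}}(x)=F'(x)$, Assumption~\ref{assum_gen_PL_alpha} reduces to ordinary $(\alpha,\tau)$-gradient-dominance on $[0,R]$. On an atom piece the ratio $\big(F(x)-\min_{x'\in[0,R]}F(x')\big)/|F'(x)|^{\alpha}$ equals $c^{1-\alpha}(\tfrac{\alpha-1}{\alpha})^{\alpha}\le C^{1-\alpha}(\tfrac{\alpha-1}{\alpha})^{\alpha}$ (since $c\ge C$ and $1-\alpha\le 0$), using $\min_{[0,R]}f_0=f_0(0)$ and $\min_{[0,R]}f_1=f_1(\rho)$; on $f_1$'s piece $[2\rho,R]$ one drops the positive constant $f_1(\rho)$ to get $F(x)-f_1(\rho)<Cx^{\frac{\alpha}{\alpha-1}}$ while $|F'(x)|^{\alpha}=C^{\alpha}(\tfrac{\alpha}{\alpha-1})^{\alpha}x^{\frac{\alpha}{\alpha-1}}$, so the ratio is again $<C^{1-\alpha}(\tfrac{\alpha-1}{\alpha})^{\alpha}$. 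Hence any $\tau\ge C^{1-\alpha}(\tfrac{\alpha-1}{\alpha})^{\alpha}$ works for both, establishing $f_0,f_1\in\mathcal{F}^{\mathcal{X},\text{uni}}_{\alpha,\tau,L}$ with the stated constants.

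The only genuinely delicate point is keeping the projection inactive: near $\partial\mathcal{X}$ the clipped mapping can be strictly smaller than $\nabla F$, so $\mathcal{G}_{\eta,\mathcal{X}}(x)=F'(x)$ is false in general, and the proof must lean on the step-size cap $\eta_0\le 1/(2L)$ together with the ``no overshoot'' estimate $|F'(x)|\le L|x-x^{*}|$ (which in turn comes from the explicit bound on $\phi_{c,x^{*}}''$). A secondary nuisance is the coefficient $2^{1/(\alpha-1)}C$ of $f_1$'s central atom, which is favourable for gradient-dominance (larger $c$ gives a smaller admissible $\tau$) but costs a factor in smoothness; it is absorbed by the smallness of $\rho$. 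Everything else — the gluing computations, the monotonicity of $f_0$ and $f_1$ on $[0,R]$, and the two power-function identities — is routine.
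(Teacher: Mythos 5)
Your proposal is correct, and while it follows the same overall skeleton as the paper (check $C^1$ gluing and a second-derivative bound for $L$-smoothness, identify the unique minimizers $0$ and $\rho$, then verify Assumption~\ref{assum_gen_PL_alpha} by showing $\mathcal{G}_{\eta,\mathcal{X}}=f_i'$ and computing the ratio $(f_i(x)-f_i^{*})/|f_i'(x)|^{\alpha}$ on each power piece), the way you handle the projected-gradient-dominance step is genuinely different and, in one respect, cleaner. The paper chooses piece-specific step-size caps (e.g.\ $\eta\le(\alpha-1)/(\alpha CR^{1/(\alpha-1)})$ on $[1,R]$, a separate cap of order $\rho^{(\alpha-2)/(\alpha-1)}$ on $[0,2\rho]$) and, for $f_0$ on $[0,1]$, must treat the case where the projection is active ($x-\eta f_0'(x)<0$, so $\mathcal{G}_{\eta,\mathcal{X}}(x)=x/\eta$), which costs an extra requirement $\tau_{f_0}\ge C$ that the paper then absorbs by taking the max to equal $C^{1-\alpha}((\alpha-1)/\alpha)^{\alpha}$ (implicitly using $C\le(\alpha-1)/\alpha$, guaranteed by the later choice of $C$). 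Your uniform cap $\eta_0\le 1/(2L)$ combined with the no-overshoot estimate $|f_i'(x)|\le L|x-x^{*}|$ (which indeed follows either from your piecewise bound or simply from $L$-smoothness and $f_i'(x^{*})=0$) keeps the projection inactive for every $x\in[0,R]$, so you never need the auxiliary case and the stated $\tau\ge C^{1-\alpha}((\alpha-1)/\alpha)^{\alpha}$ suffices directly; this is a legitimate choice since Assumption~\ref{assum_gen_PL_alpha} only asks for some $\eta_0>0$. You also make explicit the smallness condition $2^{1/(2-\alpha)}\rho\le R$ under which the central piece of $f_1$ does not worsen the smoothness constant; the paper uses the same fact silently when it writes $\max\{L_0,\,2^{1/(\alpha-1)}C\rho^{(2-\alpha)/(\alpha-1)}\alpha/(\alpha-1)^{2}\}=L_0$, and the condition is indeed harmless in the regime where the lemma is applied ($\rho\le 1/2$ and $\rho\to 0$ as $m$ grows), but it is worth stating, as you do. In short: same decomposition and same constants, but your uniform-step no-overshoot argument replaces the paper's projection-active case analysis and removes an implicit constraint on $C$.
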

\begin{proof}[Proof of Lemma \ref{Lemm_PL_alpha_L_lip_f01}]
Recall
    \begin{align}
     &f_{0}(x)=C|x|^{\frac{\alpha}{\alpha-1}},\\
     &f_{1}(x)=\begin{cases}
     2^{\frac{1}{\alpha-1}}C(|x-\rho|^{\frac{\alpha}{\alpha-1}}+|\rho|^{\frac{\alpha}{\alpha-1}}) &0\le x\le 2\rho\\
     f_{0}(x) &2\rho\le x\\
     -\frac{\alpha}{\alpha-1}2^{\frac{1}{\alpha-1}}C\rho^{\frac{1}{\alpha-1}}x+2^{\frac{\alpha}{\alpha-1}}C\rho^{\frac{\alpha}{\alpha-1}} &x\le 0
     \end{cases}.
 \end{align}
 Note that each of $f_{0}$ and $f_{1}$ has a unique minimizer. Specifically, $x^{*}_{f_{0}}=\argmin_{x}f_{0}(x)=0$ and $x^{*}_{f_{1}}=\argmin_{x}f_{1}(x)=\rho$.

\noindent
\textbf{$L_{1}$-local smoothness of $f_{0}$ and $f_{1}$:} we have
\begin{align}
      |f'_{0}(x)|=
         C\frac{\alpha}{\alpha-1}|x|^{\frac{1}{\alpha-1}},
\end{align}
and
\begin{align}
  |f''_{1}(x)|=\begin{cases}
         2^{\frac{1}{\alpha-1}}C\frac{\alpha}{\alpha-1}|x-\rho|^{\frac{1}{\alpha-1}}\quad &0< x< 2\rho\\
         |f'_{0}(x)|\quad &2\rho\le x\\
         \frac{\alpha}{\alpha-1}2^{\frac{1}{\alpha-1}}C\rho^{\frac{1}{\alpha-1}}\quad &x\le 0
     \end{cases}.
\end{align}
Then $|f'_{0}(x)|\le \frac{\alpha C}{\alpha-1}R^{\frac{1}{\alpha-1}}$ for all $x\in \mathcal{N}_{f_{0}}(0)=[-R,R]$ and $|f'_{1}(x)|\le |f'_{0}(x)|\le L_{1}(R)=\frac{\alpha C}{\alpha-1}R^{\frac{1}{\alpha-1}}$ for all $x\in\mathcal{N}_{f_{1}}(\rho)=[0,R]$.

\textbf{$L_{2}$-local smoothness of $f_{0}$ and $f_{1}$:} 
\begin{align}
      |f''_{0}(x)|=
         C\frac{\alpha}{(\alpha-1)^{2}}|x|^{\frac{2-\alpha}{\alpha-1}}.
\end{align}
Let $L_{2}(R)= CR^{(2-\alpha)/(\alpha-1)}{\alpha}/{(\alpha-1)^{2}}$. 
Hence $f_{0}$ is $L_{2}$-locally smooth in $[-R,R]$.

\begin{align}
  |f''_{1}(x)|=\begin{cases}
         2^{\frac{1}{\alpha-1}}C\frac{\alpha}{(\alpha-1)^{2}}|x-\rho|^{\frac{2-\alpha}{\alpha-1}}\quad &0< x< 2\rho\\
         |f''_{0}(x)|\quad &2\rho< x\\
         0\quad &x\le 0
     \end{cases}.
\end{align}

Hence $f_{1}$ is smooth with constant $L_{2}(R)$. Then both $f_{0}$ and $f_{1}$ are $L_{2}(R)$-locally smooth in $[0,R]$.

\textbf{Local $\alpha$-P\L:} For $f_{0}$ with the neighborhood $[-R,R]$ around $x_{f_{0}}^{*}=0$, we have
\begin{align}
    \MoveEqLeft[4]f_{0}(x)-\min_{y\in [-R,R]}f_{0}(y)=C|x|^{\frac{\alpha}{\alpha-1}}\nonumber\\
    &\le\frac{(\alpha-1)^{\alpha}}{\alpha^{\alpha}C^{\alpha}} \cdot\left|C\frac{\alpha}{\alpha-1}|x|^{\frac{1}{\alpha-1}}\right|^{\alpha}\nonumber\\
    &=\frac{(\alpha-1)^{\alpha}}{\alpha^{\alpha}C^{\alpha}}\cdot |f'_{0}(x)|^{\alpha}.
\end{align}
For $f_{1}$ with the neighborhood $[0,R]$ around $x_{f_{1}}^{*}=\rho$, we have
\begin{align}
    \MoveEqLeft[4]f_{1}(x)-\min_{y\in [0,R]}f_{1}(y)=2^{\frac{1}{\alpha-1}}C|x-\rho|^{\frac{\alpha}{\alpha-1}}\nonumber\\
    &\le \frac{(\alpha-1)^{\alpha}}{C^{\alpha}\alpha^{\alpha}2^{\frac{\alpha}{\alpha-1}}}\cdot\left|
2^{\frac{1}{\alpha-1}}\frac{\alpha}{\alpha-1}\cdot C|x-\rho|^{\frac{1}{\alpha-1}}\right|^{\alpha}\nonumber\\
&=\frac{(\alpha-1)^{\alpha}}{C^{\alpha}\alpha^{\alpha}2^{\frac{\alpha}{\alpha-1}}}\cdot |f'_{1}(x)|^{\alpha}.
\end{align}
\end{proof}

\section{Proofs of \Cref{thm:cvg_SARAH}}\label{append_upper_bound_result}
Throughout the proof of \Cref{thm:cvg_SARAH}, we use the following parameterization of the time–varying step size and batch size as
\begin{equation}\label{eq_step_size_batch_size_SARAH}
    \eta_{k}=\frac{\eta_{0}}{(k+1)^{\eta}},\qquad
    n_{g}^{\,k}=(k+1)^{\beta},\quad k\ge 0,
\end{equation}
where $\eta>0$ and $\beta>0$ are some exponent constants and $\eta_{0}>0$ is the initial stepsize.
\subsection{Variance of the gradient estimator of SARAH}\label{append:vr_update}
Let us define $\gv_{\mathcal{J}_{t}}(x_{t}):=\frac{1}{|\mathcal{J}_{t}|}\sum_{j\in\mathcal{J}_{t}}\gv(x_{t},\xi_{j})$.
By adapting the variance reduction method in \citep{zhou2020stochastic} (see Algorithm \ref{algorithm2}), we have the following bound on the errors of the estimated gradient $\vv_t$:
\begin{lemma}\label{lemma:vr}
Let $n_g^t$ denote the number of stochastic gradient samples used at iteration $t$, and fix $\varepsilon>0$. Suppose that at every \emph{checkpoint} $t=kS$ with $k\in\mathbb{N}$,
\[
n_g^t \;\ge\; \frac{2\sigma^{2}}{\varepsilon},
\]
and at every \emph{non-checkpoint} iteration,
\begin{equation}\label{eq:bounds on samples of grad and hessian}
n_g^t \;\ge\; \frac{4 \tilde{L}^{2} S \,\|x_{t}-x_{t-1}\|^2}{\varepsilon}.
\end{equation}
Then, under the $\tilde{L}$-average smoothness assumption \eqref{eq-L-avg-smooth}, the variance-reduced gradient estimator $\vv_t$ satisfies
\begin{equation}\label{eq:bounds on grad and hessian}
\mathbb{E}\big[\|\nabla F(x_t)-\vv_t\|^{2}\big]\;\le\;\varepsilon
\end{equation}
for all iterations $t$.
\end{lemma}
\begin{proof}
First the gradient estimator $\vv_{t}$ is such that $\vv_{t}-\nabla F(x_{t})=\sum_{k=\lfloor t/S\rfloor S}^{t}\uv_{k}$ where
\begin{align*}
    &\uv_{k}=
\1(k=\lfloor t/S\rfloor S)\cdot[\gv_{\mathcal{J}_{k}}(x_{k})-\nabla F(x_{k})]\nonumber\\
&+\1(k>\lfloor t/S\rfloor S)\cdot[\gv_{\mathcal{J}_{k}}(x_{k})-\gv_{\mathcal{J}_{k}}(x_{k-1})-\nabla F(x_{k})+\nabla F(x_{k-1})].
\end{align*}
We know that $\mathbb{E}[\uv_{k}|\mathcal{F}_{k}]=0$ where $\mathcal{F}_{k}=\sigma(x_{0},\ldots,x_{k}, \uv_0,\cdots,\uv_{k-1})$. Conditioned on $\mathcal{F}_{k}$ and for $k>\lfloor t/S\rfloor S$, we have
\begin{align}
    &\mathbb{E}[\|\uv_{k}\|^{2}|\mathcal{F}_{k}]=\mathbb{E}\Big\|\frac{1}{n^{k}_{g}}\sum_{i=1}^{n^{k}_{g}}\gv(x_{k},\xi_{i})-\gv(x_{k-1},\xi_{i})-\nabla F(x_{k})+\nabla F(x_{k-1})\Big\|^{2}\nonumber\\
    &\overset{(a)}{=}\frac{1}{n^{k}_{g}}\mathbb{E}\Big\|\gv(x_{k},\xi_{1})-\gv(x_{k-1},\xi_{1})-\nabla F(x_{k})+\nabla F(x_{k-1})\Big\|^{2}\nonumber\\
    &\overset{(b)}{\le} \frac{2}{n^{k}_{g}}\mathbb{E}\|\gv(x_{k},\xi_{1})-\gv(x_{k-1},\xi_{1})\|^{2}+\frac{2}{n^{k}_{g}}\mathbb{E}\|\nabla F(x_{k})-\nabla F(x_{k-1})\|^{2}\nonumber\\
    &\overset{(c)}{\le} \frac{4\tilde{L}^{2}}{n^{k}_{g}}\|x_{k}-x_{k-1}\|^{2}
\end{align}
 where (a) follows because $[\gv(x_{k},\xi_{i})-\gv(x_{k-1},\xi_{i})-\nabla F(x_{k})+\nabla F(x_{k-1})]$ are i.i.d conditioned on $\mathcal{F}_k$ for $1\le i\le n^{k}_{g}$, (b) from $(a+b)^{2}\le 2a^{2}+2b^{2}$ and (c) from the $\tilde{L}$-average smooothness \Cref{eq-L-avg-smooth}. For $k=\lfloor t/S\rfloor S$, $\mathbb{E}[\|\uv_{k}\|^{2}]\le \frac{\sigma^{2}}{n^{k}_{g}}$. 
\begin{align}\label{eq_0000045}
    &\mathbb{E}[\|\nabla F(x_{t})-\vv_{t}\|^{2}]=\mathbb{E}\left\|\sum_{k=\lfloor t/S\rfloor S}^{t}\uv_{k}\right\|^{2}\le \sum_{k=\lfloor t/S\rfloor S}^{t}\mathbb{E}[\mathbb{E}[\|\uv_{k}\|^{2}|\mathcal{F}_{k}]]\nonumber\\
    &\le \frac{\sigma^{2}}{n^{\lfloor t/S\rfloor S}_{g}}+\sum_{k=\lfloor t/S\rfloor S+1}^{t}{4\tilde{L}^{2}}\mathbb{E}\left[\frac{\|x_{k}-x_{k-1}\|^{2}}{n^{k}_{g}}\right]
\end{align}
where the first inequality comes from the fact that $\mathbb{E}[\uv_{k}^{T}\uv_{k'}]=\mathbb{E}[\uv_{k'}^{T}\mathbb{E}[\uv_{k}|\mathcal{F}_{k}]]=0$ for $k> k'$.\\
If we take $ n_g^k\ge  \frac{2\sigma^{2}}{\varepsilon}$ samples at checkpoints ($k=\lfloor t/S\rfloor S$) and and at the other iterations, we take $n_g^k\ge \frac{4 \tilde{L}^{2} S \|x_{k}-x_{k-1}\|^2}{\varepsilon}$ samples, we have
\[
\mathbb{E}[\|\nabla F(x_{t})-\vv_{t}\|^{2}]\le \varepsilon.
\]
\end{proof}
Now by using \eqref{eq_0000045} in the proof of the previous lemma and the batch-size in \eqref{eq_step_size_batch_size_SARAH}, we have
\begin{align}\label{eq_010146}
    &\mathbb{E}[\|\vv_{t}-\nabla F(x_{t})\|^{2}]\le \frac{\sigma^{2}}{(\lfloor t/S\rfloor S+1)^{\beta}}+\frac{4\tilde{L}^{2}\cdot[t-\lfloor t/S\rfloor S]}{(\lfloor t/S\rfloor S+1)^{\beta}}\cdot\max_{\lfloor t/S\rfloor S+1\le k\le t}\mathbb{E}[\|x_{k}-x_{k-1}\|^{2}].
\end{align}
We later use this bound to show to prove high probability stability of updates of SARAH near a connected component of local minima.
\subsection{Proof of stability of SARAH}\label{append_proof_high_prob_stab}
Fix any $\delta\in(0,1)$ and let $\mathcal M$ be an isolated, compact, connected set of local minimizers with common value
$l:=F(x^\star)$ for all $x^\star\in\mathcal M$. Define the neighborhoods
\begin{align}\label{eq_tubular_neighborhood}
    &\mathcal U_0:=\Bigl\{x:\ \operatorname{dist}(x,\mathcal M)<\tfrac R2,\ \,F(x)-l\le \tfrac{s}{2}\Bigr\},\nonumber\\
&\mathcal U:=\Bigl\{x:\ \operatorname{dist}(x,\mathcal M)<\tfrac R2\Bigr\},
\end{align}
and the event $\mathcal{E}_{n}(\mathcal{U}):=\{x_k\in\mathcal U\ \text{for all }k\le n\}$ and abbreviate it as \(\mathcal E_n\). We claim that
$\mathbb P(\mathcal{E}_n)\ge 1-\delta$ whenever $x_0\in\mathcal U_0$. 

Assume that $x_k\in\mathcal U$ and choose the step-size (and batching policy) so that the one–step move is small with high probability; i.e. $\mathbb{P}(C_{n})\ge 1-\delta/2$ where
\[
C_n:=\Bigl\{\ \|x_{k+1}-x_k\|\le \tfrac R4\ \text{for all }k\le n\ \Bigr\}.
\]
By the triangle inequality, on $C_n$ we have $x_{k+1}\in\mathbb B_2(\mathcal M;\,3R/4)$. We define the “forbidden region”
\[
\mathcal R:=\mathbb B_2(\mathcal M;\,3R/4)\setminus \mathbb B_2(\mathcal M;\,R/2),
\]
and then as $\mathcal{M}$ is a compact isolated set, the next lemma shows that $f(x')-l>0$ for all $x\in\mathcal{R}$.

\begin{lemma}\label{lem_D_2}
Let \[
s_{0}:=\inf\{\,F(x)-l:\ x\in\mathcal R\,\}>0.
\]
Then we have that $s_{0}>0$.
\end{lemma}
\begin{proof}
Suppose, by contradiction, that $s_{0}=0$. Then there exists a sequence $\{x^{k}\}\subset \mathcal R$ with
$F(x^{k})\to l$. Because $\mathcal R\subset \mathbb B_2(\mathcal M;\,3R/4)$ and $\mathcal M$ is compact,
the sequence $\{x^{k}\}$ is bounded:
\[
\|x^{k}\|\le \tfrac{3R}{4}+\sup_{y\in \mathcal M}\|y\|<\infty.
\]
Hence there is a subsequence $x^{k_i}\to x$. By continuity, $F(x)=l$. Moreover,
$\operatorname{dist}(x^{k},\mathcal M)\ge R/2$ for all $k$, so $\operatorname{dist}(x,\mathcal M)\ge R/2$.
On the other hand, $x\in\overline{\mathbb B_2(\mathcal M;\,3R/4)}\subset \mathbb B_2(\mathcal M;\,R)$, and as $\mathcal M$ is an isolated set, we have $F(y)>l$ for all
$y\in \mathbb B_2(\mathcal M;\,R)\setminus \mathcal M$. Thus $F(x)=l$ forces $x\in\mathcal M$, which
contradicts $\operatorname{dist}(x,\mathcal M)\ge R/2$. Therefore $s_{0}>0$.
\end{proof}

Next, we invoke the following lemma and verify its assumptions later to guarantee that the SARAH iterates remain in $\mathcal U$.
\begin{lemma}\label{lem_D_3}
Suppose $x,y\in\mathbb{R}^{d}$ such that (i) $\mathrm{dist}(x,\mathcal M)<\tfrac R2$, (ii) $\|x-y\|\le \tfrac R4$, and (iii) $F(y)-l<s_{0}$. Then $\mathrm{dist}(y, \mathcal{M})<\tfrac R2$.
\end{lemma}
\begin{proof}
Pick $x^\ast\in \mathcal{M}$ with $\|x-x^\ast\|<\frac R2$. By the triangle inequality,
\[
\|y-x^\ast\|\le \|y-x\|+\|x-x^\ast\|\le \tfrac R4+\tfrac R2=\tfrac{3R}{4},
\]
so $\operatorname{dist}(y,\mathcal{M})\le \tfrac{3R}{4}$. Suppose by a contradiction that $\operatorname{dist}(y,\mathcal{M})\ge \tfrac R2$, then
\[
y\in \mathcal{R}=\mathbb B_2(\mathcal M;\,3R/4)\setminus \mathbb B_2(\mathcal M;\,R/2).
\]
By the definition of $s_{0}$, this implies $f(y)-l\ge s_{0}$, contradicting assumption (ii). Hence $\operatorname{dist}(y,\mathcal{M})<\tfrac R2$, as claimed. 
\end{proof}
The plan is to exclude $x_{k+1}\in\mathcal R$ by showing that, with high probability, $F(x_{k+1})-l<s_{0}$ once $x_0\in\mathcal U_0$.\\
Let $D_n:=F(x_n)-l$ and define
\[
\mathsf A_k:=-\langle \nabla F(x_k),\,\vv_{k}-\nabla F(x_k)\rangle,
\qquad
\mathsf B_k:=\|\vv_{k}\|^2,
\]
where $\vv_{k}$ is the SARAH gradient estimator in \Cref{alg:SARAH}.

\begin{lemma}\label{lem_D_5}
Under $L_{2}$-local smoothness and local $\alpha$-P\L{} and for $\eta_{0}\le \tau/2$, on $\mathcal{E}_n$, we have
\begin{align*}
    \textbf{(i) $\alpha=2$:}\quad
D_{n+1}\mathbf 1_{\mathcal{E}_n}\ \le\
D_0\cdot\prod_{k=0}^{n}\!(1-\tau^{-1}\eta_k)+
\sum_{k=0}^{n}\!\Big(\prod_{j=k+1}^{n}(1-\tau^{-1}\eta_j)\Big)\,\eta_k\,\mathsf A_k\,\mathbf 1_{\mathcal{E}_k}
+\frac{L_{2}}{2}\sum_{k=0}^{n}\eta_k^2\,\mathsf B_k\,\mathbf 1_{\mathcal{E}_k},
\end{align*}
\begin{align*}
    \textbf{(ii) $\alpha\in[1,2)$:}\quad
D_{n+1}\mathbf 1_{\mathcal{E}_n}\ &\le\
D_0\cdot\prod_{k=0}^{n}\!(1-\tau^{-2/\alpha}\eta_k^{q})+\tilde{c}\sum_{k=0}^{n}\eta_k^{\frac{2q-2}{2-\alpha}}+\sum_{k=0}^{n}\!\Big(\prod_{j=k+1}^{n}(1-\tau^{-2/\alpha}\eta_j^{q})\Big)\,\eta_k\,\mathsf A_k\,\mathbf 1_{\mathcal{E}_k}\nonumber\\
&\qquad\qquad\qquad\qquad\qquad\qquad\qquad\qquad\qquad\qquad\qquad\qquad+\frac{L_{2}}{2}\sum_{k=0}^{n}\eta_k^2\,\mathsf B_k\,\mathbf 1_{\mathcal{E}_k},
\end{align*}   
for any $1\le q$. 
\end{lemma}
\begin{proof}
Since $F$ is $L_{2}$-smooth and the update rule is $x_{n+1}=x_n-\eta_n \vv_{n}$, we have
\begin{align}\label{eq_040}
&D_{n+1}
\le D_n - \eta_n\langle \nabla F(x_n), \vv_{n}\rangle + \frac{L_{2}\eta_{n}^2}{2}\|\vv_{n}\|^2\nonumber\\
&= D_n - \eta_{n}\|\nabla F(x_n)\|^2 - \eta_{n}\langle \nabla F(x_n), \vv_{n}-\nabla F(x_n)\rangle+ \frac{L_{2}\eta_{n}^2}{2}\|\vv_{n}\|^2\nonumber\\
&= D_n - \eta_{n}\|\nabla F(x_n)\|^2 + \eta_{n}\mathsf A_n + \frac{L_{2}\eta_{n}^2}{2}\mathsf B_k .
\end{align}
Multiplying both hand-sides of \eqref{eq_040} by $\mathbf{1}_{\mathcal{E}_n}$ and noting that $\mathbf{1}_{\mathcal{E}_{n+1}}\le \mathbf{1}_{\mathcal{E}_n}$, we obtain the recursive bounds below. We distinguish the two cases.

\smallskip\noindent
\emph{Case $\alpha=2$.}
On $\mathcal{E}_n$ the unified gradient domination gives $\|\nabla F(x_n)\|^2\ge \tau^{-1}\,(F(x_n)-l)=\tau^{-1} D_n$, hence
\begin{align}
    D_{n+1}\mathbf{1}_{\mathcal{E}_n}
&\;\le\; (1-\eta_{n} \tau^{-1})\,D_n\mathbf{1}_{\mathcal{E}_n} \;+\; \eta_{n}\mathsf A_n\mathbf{1}_{\mathcal{E}_n}\;+\;\frac{L\eta_{n}^2}{2}\,\mathsf B_n\mathbf{1}_{\mathcal{E}_n}.
\end{align}
Iterating this affine recursion and using $\mathbf{1}_{\mathcal{E}_{n}}\le \mathbf{1}_{\mathcal{E}_k}$ for $k\le n$ gives the unfolded product–sum representation stated in the lemma.

\smallskip\noindent
\emph{Case $\alpha\in[1,2)$.}
On $\mathcal{E}_n$, the local PŁ property yields that $\|\nabla F(x_n)\|\ge \tau^{-\frac{1}{\alpha}}\,D_n^{\frac{1}{\alpha}}$ and therefore \eqref{eq_040} becomes 
\begin{align}\label{eq_041}
\MoveEqLeft[4]D_{n+1}
\le D_n - \eta_{n}\|\nabla F(x_n)\|^2 + \eta_{n}\mathsf A_n + \frac{L\eta_{n}^2}{2}\mathsf B_k\nonumber\\
&\le D_n - \eta_{n}\tau^{-\frac{2}{\alpha}}D_{n}^{\frac{2}{\alpha}} + \eta_{n}\mathsf A_n + \frac{L\eta_{n}^2}{2}\mathsf B_k\nonumber\\
&= (1-\eta_{n}^{q}\tau^{-\frac{2}{\alpha}})D_{n}+\eta_{n}\tau^{-\frac{2}{\alpha}}(\eta_{n}^{q-1}D_{n}-D_{n}^{\frac{2}{\alpha}})+ \eta_{n}\mathsf A_n + \frac{L\eta_{n}^2}{2}\mathsf B_k\nonumber\\
&\overset{(a)}{\le} (1-\eta_{n}^{q}\tau^{-\frac{2}{\alpha}})D_{n}+\tilde{c}\eta_{n}^{\frac{2q-2}{2-\alpha}}+ \eta_{n}\mathsf A_n + \frac{L\eta_{n}^2}{2}\mathsf B_k.
\end{align}
In step (a), to linearize $D_n^{\,2/\alpha}$ and bound the term
$\eta_{n}^{\,q}\tau^{-2/\alpha} D_n - \eta_{n}\tau^{-2/\alpha} D_{n}^{\,2/\alpha}$,
we use
\[
\max_{x\in\R}\{\,c\,x - x^{2/\alpha}\,\}
= \Big(\tfrac{\alpha}{2}\Big)^{\!\frac{\alpha}{2-\alpha}}\,
\frac{2-\alpha}{\alpha}\; c^{\frac{2}{2-\alpha}},
\]
applied to $x=D_n$ and $c=\eta_n^{\,q-1}$. Hence, for
\[
\tilde c \;:=\; \Big(\tfrac{\alpha}{2}\Big)^{\!\frac{\alpha}{2-\alpha}}
\Big(1-\tfrac{\alpha}{2}\Big)\,\tau^{-2/\alpha},
\]
we obtain
\[
\eta_{n}^{\,q}\tau^{-2/\alpha} D_n - \eta_{n}\tau^{-2/\alpha} D_{n}^{\,2/\alpha}
\;\le\; \tilde c\, \eta_{n}^{\frac{2q-2}{\,2-\alpha\,}}.
\]
Multiplying \eqref{eq_041} by $\mathbf{1}_{\mathcal{E}_n}$ and iterating this affine recursion, and next using $\mathbf{1}_{\mathcal{E}_{n}}\le \mathbf{1}_{\mathcal{E}_k}$ for $k\le n$, we obtain the unfolded product–sum representation stated in the lemma.    
\end{proof}
We denote the aggregated noises with the natural weights:
\begin{align}\label{eq_def_tilde_A_k}
    \tilde{\mathsf A}_k:=\sum_{j=0}^{k}\Big(\prod_{i=j+1}^{k}\theta_i\Big)\eta_j\,\mathsf A_j\,\mathbf 1_{\mathcal{E}_j},\;
\tilde{\mathsf B}_k:=\frac{L}{2}\sum_{j=0}^{k}\eta_j^2\,\mathsf B_j\,\mathbf 1_{\mathcal{E}_j},
\end{align}
where $\theta_i=1-\tau^{-1}\eta_i$ for $\alpha=2$ and $\theta_i=1-\tau^{-2/\alpha}\eta_i^{q}$ for $\alpha\in[1,2)$. Let us define for $n\ge1$, $\mathsf{R}_n:=\tilde{\mathsf A}_n^{\,2}+\tilde{\mathsf B}_n$ and the ``small-error'' event
\[
E_n:=\big\{\ \mathsf{R}_k<s\ \text{for all }k\le n\ \big\}.
\]
\begin{lemma}\label{lem_d7}
Let us define
\[
 \widehat E_n:=E_n\cap C_n.
\]
Assume $x_0\in \mathcal U_0$ almost surely. Pick $s$ such that $2s+\sqrt{s}<s_{0}$. If $\alpha=2$, let $\eta_0>0$ be an arbitrary constant; if $\alpha\in[1,2)$ choose $2-\alpha/2\le q$ and $\eta_{0}$ small enough so that
\begin{align}
    \sum_{k=1}^\infty\eta_k^{\frac{2 q-2}{2-\alpha}}=\eta_{0}^{\frac{2q-2}{2-\alpha}}\sum_{n=1}^{\infty}\frac{1}{(n+1)^{\frac{2q-2}{2-\alpha}}}<\frac{s}{2\tilde c},
\end{align}
with $\tilde c=(2/\alpha)^{-\frac{\alpha}{2-\alpha}}\!\bigl(1-\tfrac{\alpha}{2}\bigr)\tau^{-2/\alpha}$.
Then the following holds for all $n\in\mathbb{N}$:
\[
\widehat E_n\subset \mathcal{E}_{n+1}.
\]
\end{lemma}
\begin{proof}
By definition, $\{\mathsf{R}_{k}<s\}_{k\le n+1}$ implies $\{\mathsf{R}_{k}<s\}_{k\le n}$, so $E_{n+1}\subset E_n$.
Similarly, $\{x_{k}\in\mathcal U\}_{k\le n+1}$ implies $\{x_{k}\in\mathcal U\}_{k\le n}$, hence $\mathcal{E}_{n+1}\subset\mathcal{E}_n$.
Since $C_{n+1}\subset C_n$, we also have $\widehat E_{n+1}=E_{n+1}\cap C_{n+1}\subset E_n\cap C_n=\widehat E_n$.

\smallskip
We prove $\widehat E_n\subset \mathcal{E}_{n+1}$ by induction. Because $x_{0}\in\mathcal U_0\subset\mathcal U$ a.s., the claim holds for $n=0$.
Assume $\widehat E_{n-1}\subset \mathcal{E}_n$ and pick $\omega\in \widehat E_n$. Then $\omega\in\mathcal{E}_n$ (induction hypothesis) and $\omega\in C_n$ (by definition). Applying \Cref{lem_D_3} with $x=x_n(\omega)$ and $y=x_{n+1}(\omega)$: on $\mathcal{E}_n(\omega)$,
\[
\inf_{x^\ast\in \mathcal{M}}\|x_n(\omega)-x^\ast\|<\tfrac R2
\quad\text{and}\quad
\|x_{n+1}(\omega)-x_n(\omega)\|\le\tfrac R4,
\]
so conditions (i) and (iii) of \Cref{lem_D_3} are satisfied. It remains to ensure that $f(x_{n+1}(\omega))-l<s$, where $s>0$ is as in \Cref{lem_D_2}. We distinguish two cases $\alpha=2$ and $\alpha\in[1,2)$ in the rest of the proof:

     \emph{Case $\alpha=2$.} From the recursion of \Cref{lem_D_5} (valid on $\mathcal{E}_n$),
\begin{align}\label{eq_000059}
    &D_{n+1}\mathbf 1_{\mathcal{E}_n}\ \le\
D_0\prod_{k=0}^{n}\!(1-\tau^{-1}\eta_k)+
\sum_{k=0}^{n}\!\Big(\prod_{j=k+1}^{n}(1-\tau^{-1}\eta_j)\Big)\,\eta_k\,\mathsf A_k\,\mathbf 1_{\mathcal{E}_k}
+\frac{L}{2}\sum_{k=0}^{n}\eta_k^2\,\mathsf B_k\,\mathbf 1_{\mathcal{E}_k}.
\end{align}
Since $D_0\le s/2$ (because $x_0\in\mathcal U_0$) and all products in \eqref{eq_000059} are less than or equal to $1$,
\[
D_{n+1}(\omega)\ \le\ \frac{s}{2} + |\tilde{\mathsf{A}}_n(\omega)| + \tilde{\mathsf{B}}_n(\omega)
\ \le\ \frac{s}{2} + \sqrt{\mathsf{R}_{n}(\omega)} + \mathsf{R}_{n}(\omega).
\]
As $\omega\in E_n$ gives $\mathsf{R}_{n}(\omega)<s$, we get
$D_{n+1}(\omega)\le3s/2+\sqrt{s}$. Choosing $s>0$ so that $2s+\sqrt{s}<s_{0}$ yields
$f(x_{n+1}(\omega))-l<s_{0}$.

\emph{Case $\alpha\in[1,2)$.} \Cref{lem_D_5} yields that
\begin{align}
    \MoveEqLeft[4]D_{n+1}\mathbf 1_{\mathcal{E}_n}\ \le\
D_0\prod_{k=0}^{n}\!(1-\tau^{-2/\alpha}\eta_k^{q})\ +\
\tilde{c}\sum_{k=0}^{n}\eta_k^{\frac{2q-\alpha}{2-\alpha}}\nonumber\\
&+\sum_{k=0}^{n}\!\Big(\prod_{j=k+1}^{n}(1-\tau^{-2/\alpha}\eta_j^{q})\Big)\,\eta_k\,\mathsf A_k\,\mathbf 1_{\mathcal{E}_k}
+\frac{L}{2}\sum_{k=0}^{n}\eta_k^2\,\mathsf B_k\,\mathbf 1_{\mathcal{E}_k}.
\end{align}
Using $D_0\le s/2$, the step–size condition
$\sum_{k=0}^n\eta_k^{\frac{2 q-2}{2-\alpha}}\le s/(2\tilde c)$, and $|\tilde{\mathsf{A}}_{n}|\le \sqrt{\mathsf{R}_{n}}$, $\tilde{\mathsf{B}}_{n}\le \mathsf{R}_{n}$,
we obtain $D_{n+1}(\omega)\le 2s+\sqrt{s}<s_{0}$. Thus \Cref{lem_D_5} applies and
$x_{n+1}(\omega)\in\mathcal U$, i.e., $\omega\in\mathcal{E}_{n+1}$.

\end{proof}
\begin{lemma}\label{lemm_13_append}
Under the same conditions in \Cref{lem_d7}, and the following condition on $\eta_{0}$
\begin{align}\label{eq_0000061}
    &\eta_{0}\le \delta(2s\mathsf{C}_{1}\cdot L_{2})^{-1}\times\left[L_{1}^{2}+\sigma^{2}+\tilde{L}^{2}(R^{2}+(\mathrm{diam}(\mathcal{M}))^{2})\right]^{-1}\times\left(\sum_{k=1}^{\infty}k^{-3\eta/2}(\lfloor k/S\rfloor S+1)^{-\beta/2}\right)^{-1},
\end{align}
where $\mathsf{C}_{1}$ is a positive constant independent of parameters of the problem, we have
\[
\mathbb P(E_n)\ge\;1-\frac{\delta}{2}.
\]
\end{lemma}
\begin{proof}
\noindent\textbf{(i) Recursion for the error accumulator and decomposition of the bad event.}
Let $\widetilde E_n:=E_{n-1}\setminus E_n=E_{n-1}\cap \{\mathsf{R}_{n}\ge s\}$ denote the event that the noise budget first crosses level $s$ at time $n$, and set $\widetilde{\mathsf{R}}_{n}:=\mathsf{R}_{n}\,\mathbf 1_{E_{n-1}}$. Since the events $\{\widetilde E_k\}_{k\ge 0}$ are disjoint and $E_n=\bigcap_{k=0}^{n}\widetilde E_k^{\,c}$, it suffices to control $\sum_{k=0}^{n}\mathbb{P}(\widetilde E_k)$ via the recursion for $\widetilde{\mathsf{R}}_n$.
Note that
\[
\widetilde{\mathsf{R}}_n= \mathsf{R}_n\,\mathbf 1_{E_{n-1}}= \widetilde{\mathsf{R}}_{n-1} - \mathsf{R}_{n-1}\mathbf 1_{\widetilde E_{n-1}} + (\mathsf{R}_n-\mathsf{R}_{n-1})\,\mathbf 1_{E_{n-1}}.
\]
Taking expectations gives
\begin{align}\label{eq_00046}
    \mathbb E[\widetilde{\mathsf{R}}_n]
= \mathbb E[\widetilde{\mathsf{R}}_{n-1}]
- \mathbb E\big[\mathsf{R}_{n-1}\mathbf 1_{\widetilde E_{n-1}}\big]
+ \mathbb E\big[(\mathsf{R}_{n}-\mathsf{R}_{n-1})\,\mathbf 1_{E_{n-1}}\big].
\end{align}
Using $\mathsf{R}_{n}-\mathsf{R}_{n-1}=(\tilde{\mathsf{A}}_{n}^2-\tilde{\mathsf{A}}_{n-1}^2)+(\tilde{\mathsf{B}}_{n}-\tilde{\mathsf{B}}_{n-1})$,
\(
\tilde{\mathsf{A}}_{n}=\tilde{\mathsf{A}}_{n-1}+\eta_n(1-\eta_n \tau^{-\frac{2}{\alpha}})\,\mathsf{A}_{n}\,\mathbf 1_{\mathcal{E}_n},
\) and $\tilde{\mathsf{B}}_{n}-\tilde{\mathsf{B}}_{n-1}=\frac{L_{2}}{2}\eta_n^2\|\vv_{n}\|^2\,\mathbf 1_{\mathcal{E}_n}$,
we get
\begin{align}\label{eq_0047}
&\mathsf{R}_{n}-\mathsf{R}_{n-1}= \eta_n^2(1-\eta_n \tau^{-\frac{2}{\alpha}})^2\,\mathsf{A}_{n}^2\,\mathbf 1_{\mathcal{E}_n}+ 2\eta_n(1-\eta_n \tau^{-\frac{2}{\alpha}})\,\mathsf{A}_{n}\,\mathbf 1_{\mathcal{E}_n}\tilde{\mathsf{A}}_{n-1}+ \frac{L_{2}}{2}\eta_n^2\|\vv_{n}\|^2\,\mathbf 1_{\mathcal{E}_n}.
\end{align}
By taking expectations, we use the following bounds for the terms in \eqref{eq_0047}: For the first term in the right hand-side of \eqref{eq_0047}, the Cauchy-Schwartz inequality for $\mathsf A_k:=-\langle \nabla F(x_k),\,\vv_{k}-\nabla F(x_k)\rangle$ yields
\begin{align}\label{eq_010153}
    \MoveEqLeft[4]\mathbb E[\mathsf{A}_{n}^2\,\mathbf 1_{\mathcal{E}_n}]\le \mathbb{E}[\|\nabla F(x_{n})\|^{2}\cdot\|\vv_{n}-\nabla F(x_{n})\|^{2}\mathbf 1_{\mathcal{E}_n}]\nonumber\\
    &\overset{(a)}{\le} L_{1}^2\cdot\mathbb{E}[\|\vv_{n}-\nabla F(x_{n})\|^{2}\mathbf 1_{\mathcal{E}_n}]\nonumber\\
    &\overset{(b)}{\le} \frac{L_{1}^{2}\sigma^{2}}{(\lfloor n/S\rfloor S+1)^{\beta}}+\frac{4L_{1}^{2}\tilde{L}^{2}\cdot[n-\lfloor n/S\rfloor S]}{(\lfloor n/S\rfloor S+1)^{\beta}}\cdot\max_{\lfloor n/S\rfloor S+1\le k\le n}\mathbb{E}[\|x_{k}-x_{k-1}\|^{2}]\nonumber\\
    &\overset{(c)}{\le} \frac{L_{1}^{2}\sigma^{2}}{(\lfloor n/S\rfloor S+1)^{\beta}}+\frac{4L_{1}^{2}\tilde{L}^{2}\cdot[n-\lfloor n/S\rfloor S]}{(\lfloor n/S\rfloor S+1)^{\beta}}\cdot(2R^{2}+2(\text{diam}(\mathcal{M}))^{2}),
\end{align}
where (a) follows from the local $L_{1}$-Lipschitz continuity of $F$, and (b) applies \eqref{eq_010146} to bound
$\mathbb{E}\!\left[\|\vv_{n}-\nabla F(x_{n})\|^{2}\right]$ from above. (c) follows from the compactness of $\mathcal M$ and the fact that $\,x_k\in\mathcal U\,$ for all $k=\lfloor n/S\rfloor S,\ldots,n$. The second term in \eqref{eq_0047} is not tightly controlled by a direct Cauchy--Schwarz bound. Instead, we exploit the SARAH recursion:

\smallskip\noindent\textbf{(ii) Sharp control of the cross term $\mathbb{E}[\mathsf{A}_{n}\tilde{\mathsf{A}}_{n-1}\boldsymbol{1}_{\mathcal{E}_{n}}]$.}
Let $e_t := \vv_t - \nabla F(x_t)$ and recall $\mathsf A_t = -\langle \nabla F(x_t), e_t\rangle$ and that
$\tilde{\mathsf A}_{n-1}$ is $\mathcal F_{n-1}$–measurable by construction. On inner steps, the SARAH error obeys
\begin{align}
    &e_t \;=\; e_{t-1} \;+\; \Delta_t,\nonumber\\
    &\mathbb E[\Delta_t\mid \mathcal F_{t-1}] = 0,\nonumber\\
 &\mathbb E\big[\|\Delta_t\|^2\mid \mathcal F_{t-1}\big] \;\le\; \frac{4\tilde L^{\,2}}{n_g^{\,t}}\;\|x_t-x_{t-1}\|^2.
\end{align}
At (mini-batch) restarts,
$\mathbb E[\|e_t\|^2\mid \mathcal F_{t-1}]\le \sigma^2/n_g^{\,t}$. Using  the law of total expectation and the fact that $\mathbb E[\Delta_n\mid \mathcal F_{n-1}]=0$, we get
\[
\mathbb E\big[\mathsf A_n\,\tilde{\mathsf A}_{n-1}\,\mathbf 1_{\mathcal{E}_n}\big]
\;=\;
\mathbb E\!\left[\,-\big\langle \nabla F(x_n),\,e_{n-1}\big\rangle\,\tilde{\mathsf A}_{n-1}\,\mathbf 1_{\mathcal{E}_{n-1}}\right].
\]
By Cauchy–Schwarz and local boundedness of the gradient on $\mathcal{E}_{n-1}$ (from \eqref{eq_local_lip}, $\|\nabla F(x)\|\le L_1$ on the tube),
\begin{align}\label{eq:cross-CS}
&\mathbb E\big[\mathsf A_n\,\tilde{\mathsf A}_{n-1}\,\mathbf 1_{\mathcal{E}_n}\big]\;\le\;
L_1\,
\Big(\mathbb E\big[\|e_{n-1}\|^2\,\mathbf 1_{\mathcal{E}_{n-1}}\big]\Big)^{1/2}
\Big(\mathbb E\big[\tilde{\mathsf A}_{n-1}^{\,2}\,\mathbf 1_{\mathcal{E}_{n-1}}\big]\Big)^{1/2}.
\end{align}
Let $t_0:=\lfloor n/S\rfloor S$ be the last restart index. Within the current epoch, the batch size is 
$n_g^{\,t}\ge (\lfloor t/S\rfloor S+1)^{\beta}$. Unrolling $e_t$ from $t$ to $t_0$ and taking expectations yields
\begin{align*}
    &\mathbb E\big[\|e_{n-1}\|^2\,\mathbf 1_{\mathcal{E}_{n-1}}\big]
\;\le\;
\frac{\sigma^2}{(\lfloor n/S\rfloor S+1)^\beta}\;+\;\frac{4\tilde L^{\,2}}{(\lfloor n/S\rfloor S+1)^\beta}
\sum_{j=t_0+1}^{n-1}\mathbb E\!\big[\|x_j-x_{j-1}\|^2\,\mathbf 1_{\mathcal{E}_{j-1}}\big].
\end{align*}
By using the identity $\|x_j-x_{j-1}\|^2=\eta_{j-1}^2\|\vv_{j-1}\|^2$ and the local boundedness
$\mathbb E[\|\vv_{j}\|^2\,\mathbf 1_{\mathcal{E}_j}]\le C_v$
for a constant $C_v$ depending only on the local neighborhood\footnote{Using $\|a+b\|^{2}\le 2\|a\|^{2}+2\|b\|^{2}$, we obtain
\(
\mathbb{E}\big[\|\vv_{j}\|^{2}\big]
\;\le\; 2\,\mathbb{E}\big[\|\vv_{j}-\nabla F(\xv_{j})\|^{2}\big]
      + 2\,\mathbb{E}\big[\|\nabla F(\xv_{j})\|^{2}\big]
\;\le\; 2\big(\sigma^{2}+4\tilde{L}^{2}R^{2}\big)+2L_{1}^{2},
\)
where the second inequality uses \eqref{eq_0000045} and the Lipschitz bounds for $F$.}, we obtain
\begin{align}
    \mathbb E\big[\|e_{n-1}\|^2\,\mathbf 1_{\mathcal{E}_{n-1}}\big]
\;&\le\;
\frac{\sigma^2}{(\lfloor n/S\rfloor S+1)^\beta}\;+\;
\frac{4\tilde L^{\,2}\,C_v}{(\lfloor n/S\rfloor S+1)^\beta}\,
\sum_{j=t_0+1}^{n-1}\eta_{j-1}^2.
\end{align}
Since the step-size schedule satisfies $\sum_{k\ge 0}\eta_k^2<\infty$ and is non-increasing, the tail
$\sum_{j=t_0+1}^{n-1}\eta_{j-1}^2$ is upper bounded by a constant $H\eta_{0}^{2}$ \emph{independent of $n$ and $S$}. Hence
\begin{equation}\label{eq:e-noS}
\mathbb E\big[\|e_{n-1}\|^2\,\mathbf 1_{\mathcal{E}_{n-1}}\big]
\;\le\;
\frac{\sigma^2 + 4\,\tilde L^{\,2}\,C_v\,H\eta_{0}^{2}}{(\lfloor n/S\rfloor S+1)^\beta}.
\end{equation}
To bound $\mathbb{E}[\tilde{\mathsf{A}}_{n-1}^{2}\boldsymbol{1}_{\mathcal{E}_{n}}]$, we use the definition of $\tilde{\mathsf{A}}_{n}$ in \eqref{eq_def_tilde_A_k} as follows:
\begin{align}
    \MoveEqLeft[4]\mathbb{E}[\tilde{\mathsf{A}}_{n-1}^{2}\boldsymbol{1}_{\mathcal{E}_{n}}]\le \sum_{j=0}^{n-1}\Bigg[\prod_{i=j+1}^{n-1}\Big(1-\tfrac{\eta_i}{\tau}\Big)^{\!2}\Bigg]\eta_j^{2}\mathbb{E}[\mathsf{A}_{j}^{2}\boldsymbol{1}_{\mathcal{E}_{n}}]\nonumber\\
    &\overset{(a)}{\le} L_{1}^{2} \sum_{j=0}^{n-1}\Bigg[\prod_{i=j+1}^{n-1}\Big(1-\tfrac{\eta_i}{\tau}\Big)^{\!2}\Bigg]\eta_j^{2}\times \Bigg\{\frac{\sigma^{2}}{(\lfloor j/S\rfloor S+1)^{\beta}}+\frac{4\tilde{L}^{2}\cdot[j-\lfloor j/S\rfloor S]}{(\lfloor j/S\rfloor S+1)^{\beta}}\cdot(2R^{2}+2(\text{diam}(\mathcal{M}))^{2})\Bigg\}\nonumber\\
    &\le L_{1}^{2}\cdot\left\{\sigma^{2}+4\tilde{L}^{2}\cdot(2R^{2}+2(\text{diam}(\mathcal{M}))^{2})\right\}\times\sum_{j=0}^{n-1}\Bigg[\prod_{i=j+1}^{n-1}\Big(1-\tfrac{\eta_i}{\tau}\Big)^{\!2}\Bigg]\eta_j^{2}\nonumber\\
    &\overset{(b)}{\le} L_{1}^{2}\cdot\left\{\sigma^{2}+4\tilde{L}^{2}\cdot(2R^{2}+2(\text{diam}(\mathcal{M}))^{2})\right\}\cdot \frac{\eta_{0}^{2}C}{n^{\eta}},\label{eq_010159}
\end{align}
where (a) comes from the bound on $\mathbb{E}[\mathsf{A}_{j}^{2}\boldsymbol{1}_{\mathcal{E}_{n}}]$ in  \eqref{eq_010153} and (b) follows from the bound on $S_n\;:=\;\sum_{j=0}^{n-1}\Bigg[\prod_{i=j+1}^{n-1}\Big(1-\tfrac{\eta_i}{\tau}\Big)^{\!2}\Bigg]\eta_j^{2}$ derived in \Cref{lem_A_n} with $\eta_{n}=\eta_{0}(n+1)^{-\eta}$ and some constant $C>0$.

Plugging \eqref{eq:e-noS} and \eqref{eq_010159} into
\eqref{eq:cross-CS} gives
\begin{align*}
    &\mathbb E\big[\mathsf A_n\,\tilde{\mathsf A}_{n-1}\,\mathbf 1_{\mathcal{E}_n}\big]
\;\le\;
L_1^2\,
\sqrt{\frac{\sigma^2 + 4\tilde L^{\,2}\,C_v\,H\eta_{0}^{2}}{(\lfloor n/S\rfloor S+1)^\beta}}\times\sqrt{\Big\{\sigma^2 + c_0\,\tilde L^{\,2}\big(R^2+\mathrm{diam}(\mathcal M)^2\big)\Big\}\cdot\frac{\eta_0^{2}\,C}{n^{\eta}}}.
\end{align*}
Finally, for the last term in the right hand-side of \eqref{eq_0047}, by using \eqref{eq_010153}, we get
\begin{align}\label{eq_000050}
    &\mathbb{E}[\|\vv_{n}\|^2\,\mathbf 1_{\mathcal{E}_n}]\nonumber\\
    &\le 2\mathbb{E}[\|\nabla F(x_{n})\|^{2}]+2\mathbb{E}[\|\vv_{n}-\nabla F(x_{n})\|^{2}]\nonumber\\
    &\le 2L_{1}^{2}+2\Bigg\{\frac{\sigma^{2}}{(\lfloor n/S\rfloor S+1)^{\beta}}+\frac{4\tilde{L}^{2}[n-\lfloor n/S\rfloor S]}{(\lfloor n/S\rfloor S+1)^{\beta}}\cdot(2R^{2}+2(\text{diam}(\mathcal{M}))^{2})\Bigg\},
\end{align}

Combining \eqref{eq_0047} with all the upper bounds computed above for each term on the right-hand side yields the following upper bound for $\mathbb{E}[\mathsf{R}_{n}]$.
\begin{align}\label{eq_0052}
    \mathbb{E}[\mathsf{R}_{n}]&\le \mathbb{E}[\mathsf{R}_{n-1}]+ \eta_n^2(1-\eta_n \tau^{-\frac{2}{\alpha}})^2\,\mathbb{E}[\mathsf{A}_{n}^2\,\mathbf 1_{\mathcal{E}_n}]+ 2\eta_n(1-\eta_n \tau^{-\frac{2}{\alpha}})\,\mathbb{E}[\mathsf{A}_{n}\,\mathbf 1_{\mathcal{E}_n}\tilde{\mathsf{A}}_{n-1}]+\frac{L_{2}}{2}\eta_n^2\mathbb{E}[\|\vv_{n}\|^2\,\mathbf 1_{\mathcal{E}_n}]\nonumber\\
&\le \mathbb{E}[\mathsf{R}_{n-1}]+ \eta_n^2(1-\eta_n \tau^{-\frac{2}{\alpha}})^2\times L_{1}^{2}\Bigg\{\frac{\sigma^{2}}{(\lfloor n/S\rfloor S+1)^{\beta}}+\frac{4\tilde{L}^{2}\cdot[n-\lfloor n/S\rfloor S]}{(\lfloor n/S\rfloor S+1)^{\beta}}\cdot(2R^{2}+2(\text{diam}(\mathcal{M}))^{2})\Bigg\}\nonumber\\
&
+2\eta_n(1-\eta_n \tau^{-\frac{2}{\alpha}})\cdot L_1^2\,
\sqrt{\frac{\sigma^2 + 4\tilde L^{\,2}\,C_v\,H\eta_{0}^{2}}{(\lfloor n/S\rfloor S+1)^\beta}}\times\sqrt{\Big\{\sigma^2 + c_0\,\tilde L^{\,2}\big(R^2+\mathrm{diam}(\mathcal M)^2\big)\Big\}\,\frac{\eta_0^{2}\,C}{n^{\eta}}} \nonumber\\
&+ L_{1}L_{2}\eta_n^2+\frac{L_{2}}{2}\eta_n^2\Bigg\{\frac{\sigma^{2}}{(\lfloor n/S\rfloor S+1)^{\beta}}+\frac{4\tilde{L}^{2}[n-\lfloor n/S\rfloor S]}{(\lfloor n/S\rfloor S+1)^{\beta}}\cdot(2R^{2}+2(\text{diam}(\mathcal{M}))^{2})\Bigg\}\nonumber\\
&\le \mathbb{E}[\mathsf{R}_{n-1}]+\eta_{0}\mathsf{C}_{1}\cdot n^{-3\eta/2}(\lfloor n/S\rfloor S+1)^{-\beta/2}\times\Bigg[L_{2}L_{1}^{2}+L_{2}\sigma^{2}+L_{2}\tilde{L}^{2}(R^{2}+(\text{diam}(\mathcal{M}))^{2})\Bigg],
\end{align}
where in the last step we used $\eta_{n}=\eta_{0}/(n+1)^{\eta}$ and we grouped each term decaying with $n$ in the dominant term on the right hand side of \eqref{eq_0052}, multiplied by constant $\mathsf{C}_{1}>0$ independent of the problem parameters.

\smallskip\noindent\textbf{(iii) Final summation and tail bound.}
Now by plugging the bound in \eqref{eq_0052} for $\mathbb{E}[\mathsf{R}_{n}-\mathsf{R}_{n-1}]$ into \eqref{eq_00046}, we get
\begin{align}\label{eq_0053}
\MoveEqLeft[2]\mathbb E[\widetilde{\mathsf{R}}_n]
\le \mathbb E[\widetilde{\mathsf{R}}_{n-1}]
- \mathbb E\big[\mathsf{R}_{n-1}\mathbf 1_{\widetilde E_{n-1}}\big]+ \eta_{0}\mathsf{C}_{1}\cdot L_{2}\Big[L_{1}^{2}+\sigma^{2}+\tilde{L}^{2}(R^{2}+(\text{diam}(\mathcal{M}))^{2})\Big].    
\end{align}
Recall that $\widetilde E_n:=E_{n-1}\setminus E_n=\{\mathsf{R}_{n-1}<s,\ \mathsf{R}_n\ge s\}$. By using the fact that $\mathsf{R}_{n-1}\ge s$ on $\widetilde E_{n-1}$ and Markov's inequality,
\begin{align}
    \mathbb P(\widetilde E_{n-1})
&=\mathbb P\big(E_{n-1}\cap\{\mathsf{R}_n\ge s\}\big)
=\mathbb E\!\left[\mathbf 1_{E_{n-1}}\mathbf 1_{\{\mathsf{R}_n>s\}}\right]\nonumber\\
&\le \frac{1}{s}\,\mathbb E\!\left[\mathbf 1_{E_{n-1}}\mathsf{R}_n\right]
=\frac{\mathbb E[\widetilde{\mathsf{R}}_n]}{s}.
\end{align}
On the other hand, \eqref{eq_0053} yields the recursive bound
\begin{align}
s\,&\mathbb P(\widetilde E_{n-1})
\;\le\; \mathbb E[\widetilde{\mathsf{R}}_{n-1}]-\mathbb E[\widetilde{\mathsf{R}}_n]+ \eta_{0}\mathsf{C}_{1}\cdot L_{2}\left[L_{1}^{2}+\sigma^{2}+\tilde{L}^{2}(R^{2}+(\text{diam}(\mathcal{M}))^{2})\right]\times n^{-3\eta/2}(\lfloor n/S\rfloor S+1)^{-\beta/2}.
\end{align}
Summing this inequality from $k=1$ to $n+1$ and using $\widetilde {\mathsf{R}}_0=0$ and $\widetilde{\mathsf{R}}_n\ge 0$ gives
\begin{align}
    &s\sum_{k=1}^{n+1} \mathbb P(\widetilde E_{k-1})\le \eta_{0}\mathsf{C}_{1}\cdot L_{2}\left[L_{1}^{2}+\sigma^{2}+\tilde{L}^{2}(R^{2}+(\text{diam}(\mathcal{M}))^{2})\right]\times\sum_{k=1}^{n+1}k^{-3\eta/2}(\lfloor k/S\rfloor S+1)^{-\beta/2}.
\end{align}
By the step-size condition \eqref{eq_0000061}, we get
\[
\sum_{k=0}^{n}\mathbb P(\widetilde E_{k}) <\ \frac{\delta}{2}.
\]
Since the events $\{\widetilde E_k\}_{k\ge 0}$ are disjoint and
$E_n=\bigcap_{k=0}^{n}\widetilde E_k^{\,c}$, we obtain
\[
\mathbb P(E_n)
\;=\;1-\mathbb P\!\left(\bigcup_{k=0}^{n}\widetilde E_k\right)
\;=\;1-\sum_{k=0}^{n}\mathbb P(\widetilde E_k)
\;\ge\;1-\frac{\delta}{2}.
\]
\end{proof}
\begin{lemma}\label{lem:D8}
Suppose $\eta_0$ is small enough that
\begin{align}
    &\eta_{0}^{2}\cdot\frac{4\mathsf{C}_{2}\cdot [L_{1}^{2}+\sigma^{2}+\tilde{L}^{2}(R^{2}+(\mathrm{diam}(\mathcal{M}))^{2})]}{R^2}\times\sum_{k=0}^\infty \frac{1}{(k+1)^{2\eta}} \;\le\; \frac{\delta}{2}\,.
\end{align}
Then $\mathbb P(C_n)\ge 1-\delta/2$, where
\(
C_n:=\{\|x_{k+1}-x_k\|\le R/2,\ \forall\,k\le n\}.
\)
\end{lemma}

\begin{proof}
By \Cref{lem_d7}, we have $\mathbb P(E_n)\ge 1-\delta/2$. By \eqref{eq_000050} in the proof of \Cref{lem_d7}, we have
\begin{align}
    \MoveEqLeft[2]\mathbb E[\|\vv_{n}\|^2\,\mathbf 1_{\mathcal{E}_n}]\le2L_{1}^{2}+2\Bigg\{\frac{\sigma^{2}}{(\lfloor n/S\rfloor S+1)^{\beta}}+\frac{4\tilde{L}^{2}[n-\lfloor n/S\rfloor S]}{(\lfloor n/S\rfloor S+1)^{\beta}}\cdot(2R^{2}+2(\text{diam}(\mathcal{M}))^{2})\Bigg\}\nonumber\\
    &\le \mathsf{C}_{2}\cdot [L_{1}^{2}+\sigma^{2}+\tilde{L}^{2}(R^{2}+(\text{diam}(\mathcal{M}))^{2})],
\end{align}
where $\mathsf{C}_{2}$ is the constant independent of the parameters of the problem.
Moreover, by the additional step-size assumption and Markov’s inequality,
\begin{align}
\MoveEqLeft[4]\mathbb P(C_n)
= \mathbb P\Big(\forall\,k\le n:\ \|x_{k+1}-x_k\|\le \tfrac R2\Big)\nonumber\\
&\;\ge\; 1-\sum_{k=0}^n \mathbb P\!\left(\|x_{k+1}-x_k\|> \tfrac R2\right) \\
&= 1-\sum_{k=0}^n \mathbb P\!\left(\|\vv_{k}\|> \tfrac R{2\eta_k}\right)
\;\ge\; 1-\sum_{k=0}^n \frac{\mathbb E[\|\vv_{k}\|^2]}{(R/(2\eta_k))^2}
\nonumber\\
&\ge 1-\frac{4\mathsf{C}_{2}\cdot [L_{1}^{2}+\sigma^{2}+\tilde{L}^{2}(R^{2}+(\text{diam}(\mathcal{M}))^{2})]}{R^2}\cdot\sum_{k=1}^n \eta_k^2\nonumber\\
&\ge 1-\frac{\delta}{2}.
\end{align}

\end{proof}

From \cref{lemm_13_append,lem:D8}, we have
\begin{align}
    &\mathbb P(E_n)\ge 1-\delta/2,\quad \mathbb P(C_n)\ge 1-\delta/2\nonumber\\
    &\Longrightarrow\quad \mathbb P(E_n\cap C_n)\ge 1-\delta,
\end{align}
which implies the following lemma:
\begin{lemma}\label{lem_15_append}
Under the conditions in \Cref{lem_d7,lemm_13_append,lem:D8}, we have $\mathbb{P}[\hat{E}_{n}]\ge 1-\delta$.    
\end{lemma}
On $E_n\cap C_n$, \Cref{lem_d7} gives $D_{k+1}<s$ for all $k\le n$, hence $x_{k+1}\notin\mathcal R$; since $x_{k+1}\in\mathbb B_2(\mathcal M;3R/4)$ by $C_n$, we must have $x_{k+1}\in\mathbb B_2(\mathcal M;R/2)=\mathcal U$. Therefore
\[
E_n\cap C_n\ \subset\ \mathcal{E}_{n+1}.
\]
Starting from $x_0\in\mathcal U_0$ (so $D_0\le s/2$), an induction yields
\(
\mathbb P(\mathcal{E}_n)\ge 1-\delta
\)
for all $n$.

\subsection{Proof of optimal local convergence rate of SARAH}\label{append:G3}
Up to this point, we have shown that the SARAH iterates remain within a neighborhood of the connected component $\mathcal M$ of local minimizers with high probability; concretely, $\mathbb P(\mathcal{E}_n)\ge 1-\delta$ for all $n\ge 1$. We now turn to deriving the optimal local convergence rate of SARAH by exploiting the dynamics induced by the local $\alpha$-P\L\ property.

\begin{lemma}\label{lem:smooth-step}
Let $f:\mathbb{R}^d\to\mathbb{R}$ be $L$-smooth. For any $x_t\in\mathbb{R}^d$, step size $\eta>0$, and vector $\vv_{t}\in\mathbb{R}^d$, define $x_{t+1}=x_t-\eta \vv_{t}$. Then
\begin{align}
    &f(x_{t+1})
\;\le\;
f(x_t)
-\frac{\eta}{2}\bigl\|\nabla f(x_t)\bigr\|^2
\nonumber\\
&-\Bigl(\frac{1}{2\eta}-\frac{L}{2}\Bigr)\bigl\|x_{t+1}-x_t\bigr\|^2+\frac{\eta}{2}\bigl\|\vv_{t}-\nabla f(x_t)\bigr\|^2 .
\end{align}
\end{lemma}

\begin{proof}
By $L$-smoothness, for any $y$,
\[
f(y)\le f(x_t)+\langle \nabla f(x_t),y-x_t\rangle+\frac{L}{2}\|y-x_t\|^2 .
\]
Take $y=x_{t+1}=x_t-\eta \vv_{t}$ and write
\begin{align*}
    \langle \nabla f(x_t),x_{t+1}-x_t\rangle
&= \langle \nabla f(x_t)-\vv_{t},\,x_{t+1}-x_t\rangle\nonumber\\
&\qquad\qquad+ \langle \vv_{t},\,x_{t+1}-x_t\rangle .
\end{align*}
Let $\bar x_{t+1}:=x_t-\eta \nabla f(x_t)$. Since $x_{t+1}-x_t=-\eta \vv_{t}$,
\begin{align}
    &\langle \vv_{t}, x_{t+1}-x_t\rangle=-\eta\|\vv_{t}\|^2,\nonumber\\
&\langle \nabla f(x_t)-\vv_{t}, x_{t+1}-x_t\rangle
= -\frac{1}{\eta}\langle x_{t+1}-\bar x_{t+1},\,x_{t+1}-x_t\rangle .
\end{align}
Using the three-point identity
\[
\langle \uv-\vv,\,\uv-\wv\rangle=\tfrac12\bigl(\|\uv-\wv\|^2-\|\vv-\wv\|^2+\|\uv-\vv\|^2\bigr)
\]
with $\uv=x_{t+1}$, $\vv=\bar x_{t+1}$, $\wv=x_t$, we obtain
\begin{align*}
    &-\frac{1}{\eta}\langle x_{t+1}-\bar x_{t+1},\,x_{t+1}-x_t\rangle=\nonumber\\
& -\frac{1}{2\eta}\|x_{t+1}-x_t\|^2
+\frac{\eta}{2}\|\nabla f(x_t)\|^2
-\frac{\eta}{2}\|\vv_{t}-\nabla f(x_t)\|^2 .
\end{align*}
Substituting these relations into the smoothness bound and grouping terms gives
\begin{align*}
    &f(x_{t+1})\le
f(x_t)
-\frac{\eta}{2}\|\nabla f(x_t)\|^2\nonumber\\
&-\Bigl(\frac{1}{2\eta}-\frac{L}{2}\Bigr)\|x_{t+1}-x_t\|^2
+\frac{\eta}{2}\|\vv_{t}-\nabla f(x_t)\|^2 ,
\end{align*}
as claimed.
\end{proof}
\begin{lemma}\label{lemma_recursion}
    For every $F\in\mathcal{F}_{\alpha}$, given $\{x_{t}\}_{t=1}^{T}\in \mathcal{N}(\mathcal{M})$, $l=F(x^{*})$ for all $x^{*}
    \in \mathcal{M}$, we have the following recursion inequality for $\delta_{t}=\mathbb{E}[F(x_{t})-l]$:
    \begin{align}
        \delta_{t+1}\le \delta_{t}-\frac{\eta_{t}}{2\tau^{\frac{2}{\alpha}}}\delta_{t}^{\frac{2}{\alpha}}+\eta_{t}\mathbb{E}[\|\vv_{t}-\nabla F(x_{t})\|^{2}].
    \end{align}
\end{lemma}
\begin{proof}
By applying \Cref{lem:smooth-step} to $F$ and $\eta_{t}\le \frac{1}{2L_{2}}$, we have
\begin{align}
    F(x_{t+1})\le F(x_t)
-\frac{\eta_{t}}{2}\|\nabla F(x_t)\|^2
+\frac{\eta_{t}}{2}\|\vv_{t}-\nabla F(x_t)\|^2.
\end{align}
\newline
Using \Cref{assum_PL_alpha}, we have
\begin{align}\label{D.34}
    &F(x_{t+1})\le F(x_{t})-\frac{\eta_{t}}{2\tau^{\frac{2}{\alpha}}}(F(x_{t})-l)^{\frac{2}{\alpha}}+\frac{\eta_{t}}{2}\|\vv_{t}-\nabla F(x_{t})\|^{2}.
\end{align}
Let us define $\delta_{t}:=\mathbb{E}[F(x_{t})]-l$. By taking expectation of both sides of \eqref{D.34} and using Jensen's inequality ($\mathbb{E}[x^{2/\alpha}]\ge (\mathbb{E}[x])^{2/\alpha}$ for $\alpha\in[1,2]$), we have
\begin{align}\label{D.341}
    \delta_{t+1}\le \delta_{t}-\frac{\eta_{t}}{2\tau^{\frac{2}{\alpha}}}\delta_{t}^{\frac{2}{\alpha}}+\frac{\eta_{t}}{2}\mathbb{E}[\|\vv_{t}-\nabla F(x_{t})\|^{2}].
\end{align}    
\end{proof}
\begin{lemma}\label{lemma:rec_eq}
    If $\mathbb{E}[\|\vv_{t}-\nabla F(x_{t})\|^{2}]=C_{0}t^{-\beta}$, $\eta_{t}=\eta_{0}t^{-\eta}$, and $\eta<1$, then
    \begin{align}
    \delta_{t}=\mathcal{O}\left(t^{-\min\left\{\frac{\alpha\beta}{2},\frac{\alpha(1-\eta)}{2-\alpha}\right\}}\right).
    \end{align}
\end{lemma}
\begin{proof}
Set $\gamma:=\min\left\{\frac{\alpha\beta}{2},\frac{\alpha(1-\eta)}{2-\alpha}\right\}$ and define $B_{k}:=(k+1)^{\gamma}\delta_{k}$ for $k\ge0$. We show that $B_{k}=\mathcal{O}(1)$ for $k\ge 1$, which implies $\delta_k=\mathcal{O}((k+1)^{-\gamma})$.
\begin{align}
    \MoveEqLeft[1]B_{k+1}\le (k+2)^{\gamma}\delta_{k}+(k+2)^{\gamma}\frac{\eta_{k}}{2}\mathbb{E}[\|\vv_{k}-\nabla F(x_{k})\|^{2}]-(k+2)^{\gamma}\frac{\eta_{k}}{2\tau^{\frac{2}{\alpha}}}\delta_{k}^{\frac{2}{\alpha}}.\nonumber\\
    &=\left(\frac{k+2}{k+1}\right)^{\gamma}\Bigg[B_{k}+C_{0}\eta_{0}(k+1)^{\gamma}(k+1)^{-\eta-\beta}-(k+1)^{\gamma-\frac{2}{\alpha}\gamma-\eta}\cdot\frac{\eta_{0}B_{k}^{\frac{2}{\alpha}}}{2\tau^{\frac{2}{\alpha}}}\Bigg]\label{D17}\\
        &=B_{k}+\left[\left(1+\frac{1}{k+1}\right)^{\gamma}-1\right]B_{k}+\left(\frac{k+2}{k+1}\right)^{\gamma}\Bigg[C_{0}\eta_{0}(k+1)^{\gamma-\eta-\beta}-(k+1)^{\gamma-\frac{2}{\alpha}\gamma-\eta}\cdot\frac{\eta_{0}B_{k}^{\frac{2}{\alpha}}}{2\tau^{\frac{2}{\alpha}}}\Bigg]
    \end{align}
    where \eqref{D17} is from $\eta_{k}=\eta_{0}{(k+1)^{-\eta}}$.
    \newline
Note that for any $k\in \mathbb{N}\cup\{0\}$ we have
\begin{align}\label{eq_0001}
(k+2)^{\gamma}-(k+1)^{\gamma}=(k+1)^{\gamma}[(1+(k+1)^{-1})^{\gamma}-1]\le c_{\gamma}(k+1)^{\gamma-1}
\end{align}
where $c_{\gamma}=\gamma 2^{\gamma-1}$ and the last inequality follows from 
\begin{align}\label{eq_0002}
    (1+a)^{\gamma}-1=\int_{1}^{1+a}\gamma x^{\gamma-1}dx\le \gamma\cdot(1+a-1)\cdot\left(1+a\right)^{\gamma-1}\le \gamma 2^{\gamma-1}a
\end{align}
for $a=(k+1)^{-1}$. Hence
    \begin{align}
        &B_{k+1}-B_{k}\le c_{\gamma}(k+1)^{-1}B_{k}+2^{\gamma}\left[C_{0}\eta_{0}(k+1)^{\gamma-\eta-\beta}-(k+1)^{\gamma-\frac{2}{\alpha}\gamma-\eta}\cdot\frac{\eta_{0}B_{k}^{\frac{2}{\alpha}}}{2\tau^{\frac{2}{\alpha}}}\right]\nonumber\\
        &=(k+1)^{\gamma-\frac{2}{\alpha}\gamma-\eta}\Bigg(c_{\gamma}(k+1)^{-(\gamma-\frac{2}{\alpha}\gamma-\eta+1)}B_{k}+2^{\gamma}\Bigg[C_{0}\eta_{0}(k+1)^{-(-\frac{2}{\alpha}\gamma+\beta)}-\frac{\eta_{0}B_{k}^{\frac{2}{\alpha}}}{2\tau^{\frac{2}{\alpha}}}\Bigg]\Bigg)\nonumber\\
        &\le (k+1)^{-1}\left(c_{\gamma}B_{k}+2^{\gamma}\left[C_{0}\eta_{0}-\frac{\eta_{0}B_{k}^{\frac{2}{\alpha}}}{2\tau^{\frac{2}{\alpha}}}\right]\right),\label{D20}
    \end{align}
	where \eqref{D20} uses that $\gamma\le \alpha\beta/2$ and $\gamma\le \alpha(1-\eta)/(2-\alpha)$, i.e.,
	$-{2\gamma}/{\alpha}+\beta\ge 0$ and $\gamma-{2\gamma}/{\alpha}-\eta+1\ge0$. To upper bound \eqref{D20}, we use the following lemma.
\begin{lemma}\label{lemm_max_poly}
Let $F(B):=A_{0}B-A_{1}B^{{2}/{\alpha}}+A_{2}$ where $A_{0}>0$, $A_{1}>0$, $A_{2}\ge0$, and $1\le\alpha<2$. Then for $B\ge \max\{{A_{2}}/{A_{0}},\left({2A_{0}}/{A_{1}}\right)^{{\alpha}/(2-\alpha)}\}$, $F(B)\le 0$ and for all $B\ge0$, we have $F(B)\le A_{2}+\left({\alpha}/{2}\right)^{{\alpha}/(2-\alpha)}\cdot (2-\alpha)/{2}\cdot{A_{0}^{{2}/(2-\alpha)}}{A_{1}^{-{\alpha}/(2-\alpha)}}$.
\end{lemma}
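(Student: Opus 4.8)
The plan is to treat $F$ as a function of the single real variable $B\ge 0$ and exploit that the exponent $2/\alpha$ strictly exceeds $1$ (since $\alpha\in[1,2)$), so the concave term $-A_1B^{2/\alpha}$ eventually dominates the linear term $A_0B$. Both assertions then reduce to elementary one-variable estimates.

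For the first assertion, I would split $F(B)=(A_0B+A_2)-A_1B^{2/\alpha}$ and use the hypothesis $B\ge A_2/A_0$, i.e. $A_2\le A_0B$, to get
\[
F(B)\le 2A_0B-A_1B^{2/\alpha}=B\bigl(2A_0-A_1B^{2/\alpha-1}\bigr).
\]
Since $2/\alpha-1=(2-\alpha)/\alpha>0$, the second hypothesis $B\ge\bigl(2A_0/A_1\bigr)^{\alpha/(2-\alpha)}$ yields $B^{2/\alpha-1}\ge 2A_0/A_1$, so the bracket is $\le 0$ and hence $F(B)\le 0$.

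For the second assertion, I would first observe that $F$ is concave on $(0,\infty)$, because $F''(B)=-A_1\tfrac{2}{\alpha}\bigl(\tfrac{2}{\alpha}-1\bigr)B^{2/\alpha-2}\le 0$ as $A_1>0$ and $2/\alpha\ge 1$. Consequently the unique zero $B^{\star}$ of $F'(B)=A_0-A_1\tfrac{2}{\alpha}B^{2/\alpha-1}$, namely $B^{\star}=\bigl(\tfrac{\alpha A_0}{2A_1}\bigr)^{\alpha/(2-\alpha)}$, is the global maximizer of $F$ over $B\ge 0$ (the boundary value $F(0)=A_2$ is clearly dominated by the claimed bound). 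At $B^{\star}$ the stationarity identity $A_0=A_1\tfrac{2}{\alpha}(B^{\star})^{2/\alpha-1}$ gives $A_1(B^{\star})^{2/\alpha}=\tfrac{\alpha}{2}A_0B^{\star}$, hence $F(B^{\star})=A_2+\tfrac{2-\alpha}{2}A_0B^{\star}$; substituting $A_0B^{\star}=\bigl(\tfrac{\alpha}{2}\bigr)^{\alpha/(2-\alpha)}A_0^{2/(2-\alpha)}A_1^{-\alpha/(2-\alpha)}$ (using $1+\tfrac{\alpha}{2-\alpha}=\tfrac{2}{2-\alpha}$) produces exactly the stated expression.

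Every step here is routine calculus, so there is no real obstacle; the only points requiring care are the exponent bookkeeping when simplifying $A_0B^{\star}$ and checking that the concavity argument remains valid in the boundary case $\alpha=1$ (where $F''=-2A_1<0$ is constant) — while the strict inequality $\alpha<2$ ensures all the exponents $\alpha/(2-\alpha)$ stay finite.
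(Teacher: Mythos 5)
Your proof is correct and follows essentially the same route as the paper's: the first claim is verified by combining the two lower bounds on $B$ (you merely reorder the use of $A_2\le A_0B$ versus $B^{2/\alpha-1}\ge 2A_0/A_1$), and the second by evaluating $F$ at the stationary point $B^{\star}=\bigl(\tfrac{\alpha A_0}{2A_1}\bigr)^{\alpha/(2-\alpha)}$. Your explicit concavity check justifying that $B^{\star}$ is the global maximizer is a small extra rigor the paper leaves implicit, not a different argument.
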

Define
\[
A_{0}:=c_{\gamma},\qquad
A_{1}:=2^{\gamma-1}\eta_{0}\tau^{-{2}/{\alpha}},\qquad
A_{2}:=2^{\gamma}C_{0}\eta_{0},
\]
and set $M:=\max\left\{{A_{2}}/{A_{0}},\left({2A_{0}}/{A_{1}}\right)^{{\alpha}/(2-\alpha)}\right\}$ and
$M':=A_{2}+\left({\alpha}/{2}\right)^{{\alpha}/(2-\alpha)}\cdot (2-\alpha)/2\cdot{A_{0}^{{2}/(2-\alpha)}}{A_{1}^{-{\alpha}/(2-\alpha)}}$.
Then \eqref{D20} implies
\begin{align}\label{D0021}
    B_{k+1}\le B_{k}+(A_{0}B_{k}-A_{1}B_{k}^{\frac{2}{\alpha}}+A_{2})/(k+1).
\end{align}
We show that $B_{t}\le \max\{B_{0},M\}+M'/t$ for $t\ge 1$ by induction, which concludes the proof. For the base case, $B_{1}\le B_{0}+M'$ by \eqref{D0021} and \Cref{lemm_max_poly}. For the induction step, assume that $B_{k}\le \max\{B_{0},M\}+M'/k$. If $B_{k}\le M$, then by \eqref{D0021} and \Cref{lemm_max_poly}, we have $B_{k+1}\le B_k+M'/(k+1)\le M+M'/(k+1)$. If $B_{k}\ge M$, then $(A_{0}B_{k}-A_{1}B_{k}^{\frac{2}{\alpha}}+A_{2})\le 0$ by \Cref{lemm_max_poly}, and \eqref{D0021} gives $B_{k+1}\le B_{k}\le \max\{B_{0},M\}+M'/k$.
\end{proof}

\begin{proof}\label{proof_lemm_max_poly}
    For $B\ge \max\{{A_{2}}/{A_{0}},\left({2A_{0}}/{A_{1}}\right)^{{\alpha}/(2-\alpha)}\}$, we have
\[
F(B)= A_{0}B(1-A_{1}A_{0}^{-1}B^{{2}/{\alpha}-1})+A_{2}\le -A_{0}B+A_{2}\le 0.
\]
Note that $\max_{B\ge 0} F(B)$ is attained at $B_{*}\ge0$ where $F'(B_{*})=A_{0}-(2/\alpha)\cdot A_{1} B_{*}^{2/\alpha-1}=0$. This implies $B_{*}=(\alpha A_{0}/(2A_{1}))^{\alpha/(2-\alpha)}$. Consequently, 
\[
F(B)\le \max_{B\ge 0} F(B)= \left(\frac{\alpha}{2}\right)^{{\alpha}/(2-\alpha)}\cdot \frac{2-\alpha}{2}\cdot\frac{A_{0}^{{2}/(2-\alpha)}}{A_{1}^{{\alpha}/(2-\alpha)}}+A_{2}.
\]
\end{proof}

\textbf{\Cref{thm:cvg_SARAH}.}\textit{ Fix $\delta\in(0,1)$ and let $\mathcal M$ be a connected component of local minima of $F$ with level $l=F(y)$ for all $y\in\mathcal M$. 
Assume $F\in\mathcal F_\alpha$ and choose $R>0$ so that $\mathbb B_{2}(\mathcal M;R)\subseteq\mathcal N(\mathcal M)$ (cf.\ \Cref{assum_PL_alpha}). 
Let the oracle be batch-smooth, $O\in\mathsf O^{\tilde L}_{\sigma}$, and run \Cref{alg:SARAH} from $x_0$ with step sizes $\eta_t=\eta_0 (t+1)^{-\frac{\alpha}{2}-(2-\alpha)\mathsf{x}}$ and batch-sizes $n_g^t=(t+1)^{1-2\mathsf{x}}$, for an arbitrary small value $\mathsf{x}>0$.
Let $\{x_t\}_{t\ge1}$ be the iterates and set $\hat{x}:=x_T$. Then the following holds: 
\begin{itemize}
    \item There exists $s>0$ such that, with \(
\mathcal U:=\bigl\{x:\mathrm{dist}(x,\mathcal M)\le R/2\bigr\},\)
\(\mathcal U_0:=\bigl\{x:\mathrm{dist}(x,\mathcal M)<R/2,\ F(x)-l\le s/2\bigr\},\) and $\eta_{0}=\Theta(\sqrt{s}+\delta+R\sqrt{\delta})$, if $x_0\in\mathcal U_0$, the event \(\mathcal{E}_T(\mathcal{U}):=\{\ x_t\in\mathcal U\ \text{for all }t=1,\dots,T\ \}\) occurs with probability at least $1-\delta$.
\item Let $N:=\mathbb E\!\left[\sum_{t=1}^{T} n_g^t\mid \mathcal{E}_T(\mathcal{U})\right]$ denote the expected total number of oracle queries used up to iteration $T$ given $\mathcal{E}_T(\mathcal{U})$. 
Then $\mathbb E\!\left[F(\hat{x})-l\mid \mathcal{E}_T(\mathcal{U})\right]\le \varepsilon$, with \(N\ =\ \mathcal O\big(\varepsilon^{-2/\alpha}\big)\).
\end{itemize}}
\begin{proof}[Proof of \Cref{thm:cvg_SARAH}]
According to $\gamma=\min\{\frac{\alpha\beta}{2},\frac{\alpha(1-\eta)}{2-\alpha}\}$ in \Cref{lemma:rec_eq}, for $1\le\alpha<2$, we set  if $\eta=\frac{\alpha}{2}+(2-\alpha)\mathsf{x}$, $\beta=1-2\mathsf{x}$, and then $\gamma=\frac{\alpha}{2}-\alpha\mathsf{x}$. For example, when $\alpha=1$, we set $\eta=\frac{1}{2}+\mathsf{x}$, $\beta=1-2\mathsf{x}$, and then $\gamma=\frac{1}{2}-\mathsf{x}$. The first part of the claim is the result of \Cref{lem_15_append}. We first check the conditions in \Cref{lem_d7,lemm_13_append,lem:D8} for the choices of $\eta$ and $\beta$ in the following:
Note that for in \Cref{lem_d7}, the auxiliary variable $q=3-\alpha$, we get
\[
\sum_{n=1}^{\infty}\frac{1}{(n+1)^{\frac{2q-2}{2-\alpha}}}=\sum_{n=1}^{\infty}\frac{1}{(n+1)^{2}}\le\frac{\pi^{2}}{6}<\infty.
\]
Then the condition in \Cref{lem_d7}, reduces to
\begin{align}\label{eq_cond_eta}
    \eta_{0}<\mathsf{UB}_{1}:=\left(\frac{3s}{\pi^{2}\tilde c}\right)^{1/2}.
\end{align}

In \Cref{lemm_13_append}, since $3\eta/2+\beta/2> 5/4$, we have
\begin{align}
     \sum_{k=1}^{\infty}k^{-3\eta/2}(\lfloor k/S\rfloor S+1)^{-\beta/2}\le \sum_{t=1}^{\infty}\frac{1}{t^{\frac{5}{4}}}=\zeta(5/4)<\infty.
\end{align}
where $\zeta(\cdot)$ is the Riemann's zeta function.
Then the condition in \Cref{lemm_13_append} is restated as follows:
\begin{align}
    \eta_{0}&\le \mathsf{UB}_{2}:=\delta(2s\mathsf{C}_{1}\cdot L_{2})^{-1}\times\left[L_{1}^{2}+\sigma^{2}+\tilde{L}^{2}(R^{2}+(\text{diam}(\mathcal{M}))^{2})\right]^{-1}\cdot (\zeta(5/4))^{-1},
\end{align}
In \Cref{lem:D8}, since $\eta=\frac{\alpha}{2}+(2-\alpha)\mathsf{x}>\frac{1}{2}$,
\[
\sum_{k=0}^\infty \frac{1}{(k+1)^{2\eta}}=\zeta(\alpha+2(2-\alpha)\mathsf{x})<\infty.
\]
Then the condition in \Cref{lem:D8} can be restated as follows:
\begin{align}
    \eta_{0}\;\le\; \mathsf{UB}_{3}:=\sqrt{R^{2}\delta}\cdot8^{-\frac{1}{2}}(\mathsf{C}_{2}\cdot\zeta(2\eta))^{-\frac{1}{2}}\times[L_{1}^{2}+\sigma^{2}+\tilde{L}^{2}(R^{2}+(\mathrm{diam}(\mathcal{M}))^{2})]^{-\frac{1}{2}}.
\end{align}
In conclusion, we get
\begin{align}
    \eta_{0}\le \min\Bigg\{\mathsf{UB}_{1},\mathsf{UB}_{2},\mathsf{UB}_{3}\Bigg\}.
\end{align}

\smallskip
From Lemma \ref{lemma_recursion}, we have for $\delta_{t}=\mathbb{E}[(F(x_{t})-l)\boldsymbol{1}_{\mathcal{E}_{t}}]$:
    \begin{align}
        \delta_{t+1}\le \delta_{t}-\frac{\eta_{t}}{2\tau^{\frac{2}{\alpha}}}\delta_{t}^{\frac{2}{\alpha}}+\eta_{t}\mathbb{E}[\|\vv_{t}-\nabla F(x_{t})\|^{2}].
    \end{align}

Now, suppose for $(k-1)S<t\leq kS$, $k\geq 1$, we set $\varepsilon$ to $(kS)^{-{\beta}}$ in the $n_g^t$'s. Then, from Lemma \ref{lemma:vr}, for all $t\geq 0,$ 
\begin{equation}
    \delta_{t+1}\leq \delta_{t}-\frac{\eta_{t}}{2\tau^{\frac{2}{\alpha}}}\delta_{t}^{\frac{2}{\alpha}}+\eta_{t}\frac{C_g}{(\lceil t/S\rceil S)^{\beta}}, 
\end{equation}
 and  based on \Cref{lemma:rec_eq}, if $\mathbb{E}[\|\vv_{t}-\nabla F(x_{t})\|^{2}]=C_{0}t^{-\beta}$, $\eta_{t}=\eta_{0}(t+1)^{-\eta}$, and $\eta<1$, then
 \begin{align}
     \delta_t=\mathcal{O}(t^{-\gamma}),
 \end{align}
where $\gamma=\min\{\frac{\alpha\beta}{2},\frac{\alpha(1-\eta)}{2-\alpha}\}$.
 After $T=\mathcal{O}(\varepsilon^{-\frac{1}{\gamma}})$
 iterations, $\delta_{T}\le\varepsilon$.
 Furthermore, according to \Cref{lem:smooth-step}, we have: for $\eta_{t}\le \frac{1}{2L_{2}}$,
 \begin{align}
\MoveEqLeft[2]\|x_{t+1}-x_{t}\|^{2}\le4\eta_{t}[F(x_{t})-F(x_{t+1})]+2\eta_{t}^{2}\mathbb{E}[\|\vv_{t}-\nabla F(x_{t})\|^{2}]
 \end{align}
 and taking expectation and summing up from $t=(k-1)S+1$ to $t=kS$, we have:
\begin{align}\label{eq00123}
    \MoveEqLeft[4]\sum_{t=(k-1)S+1}^{kS}\mathbb{E}[\|x_{t+1}-x_{t}\|^{2}]\le4\sum_{t=(k-1)S+1}^{kS}\eta_{t}[F(x_{t})-F(x_{t+1})]+2\sum_{t=(k-1)S+1}^{kS}\eta_{t}^{2}\mathbb{E}[\|\vv_{t}-\nabla F(x_{t})\|^{2}]\nonumber\\
    &\le 4\sum_{t=(k-1)S+1}^{kS}\eta_{t}\mathbb{E}[F(x_{t})-F(x_{t+1})]+2\sum_{t=(k-1)S+1}^{kS}\eta^{2}_{t}\frac{1}{(kS)^{\beta}},
\end{align}
where we set $n_g^t$'s such that 
\begin{align*}
\mathbb{E}[\|\nabla F(x_t)-\gv_{t}\|^{2}]\le \frac{1}{(kS)^{\beta}},
\end{align*}
and $\eta_{t}=\eta_{0}(t+1)^{-\eta}$.\\
Hence, according to Lemma \ref{lemma:vr}, in order to obtain the errors
\[
\mathbb{E}[\|\nabla F(x_t)-\gv_{t}\|])\le \frac{1}{(kS)^{\beta}}.
\]
the average sample complexity of gradient $\mathbb{E}\left[\sum_{t=1}^T n_g^t\right]$ must be of the order of
\begin{align}\label{sample_g_VSCRN}
    &\sum_{\mod(t,S)=0}\frac{2\sigma^{2}}{t^{-\beta}}+\sum_{\mod(t,S)\neq 0} \frac{4\cdot \tilde{L}
^{2} S \mathbb{E}[\|x_{t+1}-x_{t}\|^2]}{(\lceil t/S\rceil S)^{-\beta}}.
\end{align}
It is enough to have the number of samples be greater than the following upper bound for \eqref{sample_g_VSCRN}:
 \begin{equation}\label{sample_gradient_VRSCRN}
 \begin{split}
           \MoveEqLeft[2]\mathbb{E}\left[\sum_{t=1}^T n_g^t\right]=\sum_{\mod(t,S)=0}\frac{2\sigma^{2}}{t^{-\beta}}+\sum_{\mod(t,S)\neq 0} \frac{4\cdot \tilde{L}^{2} S \mathbb{E}[\|x_{t}-x_{t+1}\|^2]}{(\lceil t/S\rceil S)^{-\beta}}\\
          &= \sum_{k=1}^{\lceil T/S\rceil}2\sigma^{2}(kS)^{\beta}+\sum_{k=1}^{\lceil T/S\rceil}\sum_{t=(k-1)S+1}^{kS}4\cdot \tilde{L}^{2} S (kS)^{\beta}\mathbb{E}[\|x_{t}-x_{t+1}\|^2]\nonumber\\
          &\overset{(a)}{\le} \sum_{k=1}^{\lceil T/S\rceil}2\sigma^{2}(kS)^{\beta}+ \sum_{k=1}^{\lceil T/S\rceil}4\cdot \tilde{L}^{2} S (kS)^{\beta}\times\sum_{t=(k-1)S+1}^{kS}\Bigg[4\eta_{t}\mathbb{E}[F(x_{t})-F(x_{t+1})+\eta_{t}^{2}(kS)^{-\beta}]\Bigg]\nonumber\\
          &\overset{(b)}{\le} \mathcal{O}(\sigma^{2}T^{\beta+1}S^{-1})+\sum_{k=1}^{\lceil T/S\rceil}4\cdot \tilde{L}^{2} S (kS)^{\beta}\times\Bigg[4\sum_{t=(k-1)S+1}^{kS}\mathcal{O}\left(\frac{\eta_{0}\max\{\delta_{0},M\}}{(t+1)^{\eta+\gamma+1}}\right)+(kS)^{-\beta}\sum_{t=(k-1)S+1}^{kS}\frac{\eta_{0}^{2}}{(t+1)^{2\eta}}\Bigg],
 \end{split}
 \end{equation}
 where in (a), we used \eqref{eq00123}, and in (b) we used 
 \begin{align*}
     \mathbb{E}[F(x_{t})-F(x_{t+1})]&=\delta_{t+1}-\delta_{t}\le \frac{B_{t+1}-B_{t}}{(t+1)^{\gamma}}\le \mathcal{O}(M'\cdot (t+1)^{-\gamma-1}).
 \end{align*}
 from \eqref{D0021} in the proof of \Cref{lemma:rec_eq} and $M'$ is defined in the line above \eqref{D0021} and $M'=\mathcal{O}((2-\alpha)\tau^{\frac{2}{2-\alpha}}(\alpha/2)^{\frac{\alpha}{2-\alpha}})$. Note that if $p\ge1$, then 
 \begin{align}
     \sum_{t=(k-1)S+1}^{kS}\frac{1}{(t+1)^{p}}=\Theta\!\big(S^{\,1-p} k^{-p}\big).
 \end{align}
 Then if $\eta\ge 1/2$, we get
 \begin{align}
     \MoveEqLeft[1]\mathbb{E}\left[\sum_{t=1}^T n_g^t\right]\le \mathcal{O}(\sigma^{2}T^{\beta+1}S^{-1})+ \nonumber\\
          &\sum_{k=1}^{\lceil T/S\rceil}4\cdot \tilde{L}^{2} S (kS)^{\beta}\times\Big[4\eta_{0}M'\Theta(S^{-\gamma-\eta}k^{-\gamma-\eta-1})+(kS)^{-\beta}\eta_{0}^{2}\cdot\Theta(S^{1-2\eta}k^{-2\eta})\Big]\nonumber\\
          &\le \mathcal{O}(\sigma^{2}T^{\beta+1}S^{-1})+ \nonumber\\
          &\sum_{k=1}^{\lceil T/S\rceil}\Big[16\tilde{L}^{2}\eta_{0}M'\Theta(S^{-\gamma-\eta+\beta+1}k^{-\gamma-\eta-1+\beta})+4\tilde{L}^{2}\eta_{0}^{2}\cdot\Theta(S^{2-2\eta}k^{-2\eta})\Big]\nonumber\\
          &\overset{(a)}{\le} \mathcal{O}\Bigg(\sigma^{2}T^{\beta+1}S^{-1}+\tilde{L}^{2}\eta_{0}M'S^{\beta+1-\eta-\gamma}+\tilde{L}^{2}\eta_{0}^{2}S^{2-2\eta}\Bigg),
 \end{align}
 where in (a), we used $\sum_{k=1}^{\lfloor T/S\rfloor}k^{-2\eta}=\mathcal{O}(1)$ and $\sum_{k=1}^{\lfloor T/S\rfloor}k^{-\gamma-\eta-1}=\mathcal{O}(1)$ for $\eta>1/2$.\\
 If $S=\lfloor \frac{T}{q}\rfloor$ where $q$ is a constant integer, the average sample complexity of gradient, would be 
 \[
\mathcal{O}\left(\sigma^{2}T^{\beta}+\tilde{L}^{2}\eta_{0}M'T^{\beta+1-\eta-\gamma}+\tilde{L}^{2}\eta_{0}^{2}T^{2-2\eta}\right)
 \]
 As $T\ge \mathcal{O}(\varepsilon^{-\frac{1}{\gamma}})$, the average sample complexity of gradient should be at least
 \[
\mathcal{O}\left(\sigma^{2}\varepsilon^{-\frac{\beta}{\gamma}}+\tilde{L}^{2}\eta_{0}M'\varepsilon^{-\frac{\beta+1-\eta-\gamma}{\gamma}}+\tilde{L}^{2}\eta_{0}^{2}\varepsilon^{-\frac{2-2\eta}{\gamma}}\right),
 \]
 in order to get $\delta_{T}\le \varepsilon$. Since $\gamma=\min\{\frac{\alpha\beta}{2},\frac{\alpha(1-\eta)}{2-\alpha}\}$, for $1\le\alpha<2$, we set  if $\eta=\frac{\alpha}{2}+(2-\alpha)\mathsf{x}$, $\beta=1-2\mathsf{x}$, and then $\gamma=\frac{\alpha}{2}-\alpha\mathsf{x}$. Then  the average sample complexity of gradient for the case $1<\alpha<2$ should be at least
\begin{align}\label{eq_103103}
    &\mathcal{O}\Bigg(\sigma^{2}\varepsilon^{-\frac{2}{\alpha}}+\tilde{L}^{2}\eta_{0}M'\varepsilon^{-\frac{2}{\alpha}+\frac{2\alpha-2}{\alpha}}+\tilde{L}^{2}\eta_{0}^{2}\varepsilon^{-\frac{2}{\alpha}(2-\alpha)}\Bigg)\nonumber\\
    &{=}\mathcal{O}\left([\sigma^{2}+\tilde{L}^{2}\eta_{0}M'+\tilde{L}^{2}\eta_{0}^{2}]\cdot \varepsilon^{-\frac{2}{\alpha}}\right)\nonumber\\
    &=\mathcal{O}\left([\sigma^{2}+\tilde{L}^{2}\eta_{0}(2-\alpha)\tau^{\frac{2}{2-\alpha}}\left(\frac{\alpha}{2}\right)^{\frac{\alpha}{2-\alpha}}+\tilde{L}^{2}\eta_{0}^{2}]\cdot \varepsilon^{-\frac{2}{\alpha}}\right),
\end{align}
where the last inequality comes from $M'=\mathcal{O}((2-\alpha)\tau^{\frac{2}{2-\alpha}}(\alpha/2)^{\frac{\alpha}{2-\alpha}})$.
 \end{proof}
\subsection{Supplementary lemmas for \Cref{sec:LB_under_local_PL}}

\begin{lemma}\label{lem_A_n}
Let
\[
S_n\;:=\;\sum_{j=1}^{n-1}\Bigg[\prod_{I=j+1}^{n-1}\Big(1-\frac{\eta_I}{\tau}\Big)^{\!2}\Bigg]\eta_j^{2},
\qquad
\eta_k=\frac{\eta_0}{(k+1)^{\eta}},
\]
with parameters $\eta\in(\tfrac12,1)$ and $\tau,\eta_0>0$ (empty products are $1$). Then there exist constants $c,C>0$ and $N\in\mathbb{N}$, depending only on $(\eta,\tau)$, such that for all $n\ge N$,
\[
\eta_0^{2}c\,n^{-\eta}\ \le\ S_n\ \le\ \eta_0^{2}C\,n^{-\eta}.
\]
In particular, $S_n=\Theta(n^{-\eta})$.
\end{lemma}

\begin{proof}
We use $(1-x)\le e^{-x}$ and $(1-x)^2\ge 1-2x$ for $x\in[0,1]$, and for nonnegative $\{y_i\}$ with $\sum_i y_i\le 1$,
$\prod_i(1-y_i)\ge 1-\sum_i y_i$.

\emph{Head–tail split.}
Pick $K$ so large that $\eta_I/\tau\le \tfrac12$ for all $I\ge K$.
Write
\[
S_n=\sum_{j=1}^{K-1}\Big(\cdots\Big)\eta_j^2 \;+\; \sum_{j=K}^{n-1}\Big(\cdots\Big)\eta_j^2
=:S_n^{\mathrm{head}}+S_n^{\mathrm{tail}} .
\]

For fixed $j\le K-1$,
\[
\prod_{I=j+1}^{n-1}\Big(1-\tfrac{\eta_I}{\tau}\Big)^{2}
\;\le\;\exp\!\Big(-\tfrac{2}{\tau}\sum_{I=K}^{n-1}\eta_I\Big)
\;\le\; \exp(-c\,n^{1-\eta}),
\]
since $\sum_{I=K}^{n-1}\eta_I\asymp n^{1-\eta}$ for $\eta\in(0,1)$. Hence
$S_n^{\mathrm{head}}=O(e^{-c\,n^{1-\eta}})=o(n^{-\eta})$.

For $j\ge K$, $(1-x)\le e^{-x}$ gives
\[
\prod_{I=j+1}^{n-1}\Big(1-\tfrac{\eta_I}{\tau}\Big)^{2}
\;\le\;\exp\!\Big(-\tfrac{2}{\tau}\sum_{I=j+1}^{n-1}\eta_I\Big).
\]
Let $r:=n-1-j\in\{0,1,\dots\}$. Monotonicity of $(\eta_k)$ yields
\[
\sum_{I=j+1}^{n-1}\eta_I \;\ge\; r\,\eta_{n-1}
\;=\; r\,\frac{\eta_0}{n^{\eta}},
\]
so with $\lambda:=\tfrac{2\eta_0}{\tau}$,
\[
\prod_{I=j+1}^{n-1}\Big(1-\tfrac{\eta_I}{\tau}\Big)^{2}
\ \le\ e^{-\lambda r/n^{\eta}}.
\]
Thus
\[
S_n^{\mathrm{tail}}
\;\le\; \sum_{r=0}^{n-1} e^{-\lambda r/n^{\eta}}\;\eta_{n-1-r}^{2}.
\]
Split the sum between $r\le n/2$ and $r>n/2$. If $0\le r\le n/2$, then
$\eta_{n-1-r}^{2}\le \eta_0^2/(n/2)^{2\eta}=2^{2\eta}\eta_0^2\,n^{-2\eta}$. Hence
\[
\sum_{r=0}^{\lfloor n/2\rfloor} e^{-\lambda r/n^{\eta}}\eta_{n-1-r}^{2}
\le 2^{2\eta}\eta_0^2\,n^{-2\eta}\sum_{r\ge0} e^{-\lambda r/n^{\eta}}
= 2^{2\eta}\eta_0^2\,n^{-2\eta}\,\frac{1}{1-e^{-\lambda/n^{\eta}}}.
\]
For large $n$, $1-e^{-x}\ge x/2$ when $x=\lambda/n^{\eta}\in(0,1]$, so this is
$\le \dfrac{2^{2\eta+1}\eta_0^2}{\lambda}\,n^{-\eta}= (2^{2\eta}\eta_0\tau)\,n^{-\eta}$.
If $r>n/2$, then $e^{-\lambda r/n^{\eta}}\le e^{-(\lambda/2)n^{1-\eta}}$ and
$\eta_{n-1-r}^{2}\le\eta_0^2$, so the total is
$O\!\big(n\,e^{-(\lambda/2)n^{1-\eta}}\big)=o(n^{-\eta})$.
Therefore $S_n^{\mathrm{tail}}\le C\,n^{-\eta}$ for some $C>0$,
and hence $S_n\le C\,n^{-\eta}$.

To give a lower bound, set
\[
\gamma:=\frac{\tau}{2^{\eta+3}\eta_0}\,,\qquad m:=\big\lfloor \gamma n^{\eta}\big\rfloor .
\]
For large $n$, $1\le m\le n/2$. For any $j\in\{n-m,\dots,n-1\}$ we have
\[
\sum_{I=j+1}^{n-1}\frac{2\eta_I}{\tau} \;\le\; \sum_{I=n-m}^{n-1}\frac{2\eta_I}{\tau}
\;\le\; \frac{2m}{\tau}\,\eta_{n-m}
\;\le\; \frac{2^{\eta+1}\eta_0\gamma}{\tau}
\;=\; \frac12,
\]
using $n-m+1\ge n/2$. Since $(1-x)^2\ge 1-2x$ and $\prod_i(1-y_i)\ge 1-\sum_i y_i$ when the sum is $\le1$,
\[
\prod_{I=j+1}^{n-1}\Big(1-\tfrac{\eta_I}{\tau}\Big)^{2}
\;\ge\; \prod_{I=n-m}^{n-1}\Big(1-\tfrac{2\eta_I}{\tau}\Big)
\;\ge\; 1-\sum_{I=n-m}^{n-1}\tfrac{2\eta_I}{\tau}
\;\ge\; \tfrac12 .
\]
Therefore
\[
S_n\ \ge\ \frac12\sum_{j=n-m}^{n-1}\eta_j^2
\ \ge\ \frac12\,m\,\eta_{n-1}^2
\ \ge\ \Big(\frac{\gamma\eta_0^2}{2}\Big) n^{-\eta}.
\]
Setting $c:=\gamma\eta_0^2/2$ gives the claim.
\end{proof}

\section{Proofs of \Cref{sec_Lower bound for stochastic convex}}\label{append:proofs_sec:4}
\textbf{\Cref{lower_bound_PL_convex_NBS}.}\textit{ For the family of domain sets $\mathbb{S}_{R}$, there exists a function $F$ in the function class $\mathcal{F}^{\mathcal{X}}_{\alpha,\tau,\varepsilon}$, an oracle $O$ in the family of oracles $\mathsf{O}^{G}$, and $\alpha\in(1,2]$ and $\varepsilon\le \min\{((\alpha-1)/{\alpha})^{\alpha}\tau,1\}$, such that the number of oracle queries for any stochastic first-order algorithm $\mathcal{A}\in\mathcal{A}_{m}$ required to output an estimate $\hat{x}$ satisfying $F(\hat{x})-F^{*}\le \varepsilon$ with probability at least $1-\delta$ is
\begin{align}
   \Omega\left(\frac{G^{2}\tau^{\frac{2}{\alpha}}\log\left(\frac{2\alpha R}{(\alpha-1)\varepsilon^{\frac{\alpha-1}{\alpha}}\tau^{\frac{1}{\alpha}}}\right)}{\varepsilon^{\frac{2}{\alpha}}}\right).
\end{align}}
\begin{proof}[Proof of Theorem \ref{lower_bound_PL_convex_NBS}]\label{proof_lower_bound_PL_convex_NBS}
We prove the lower bound by a reduction to the noisy binary search (NBS) problem. Herein, we consider the following: Assume that $N$ sorted elements $\{a_{1},\ldots, a_{N}\}$ are given and we want to insert a new element $u$ using the queries of the form  ``Is $u>a_{j}$?". The oracle answers this query correctly with probability $1/2+p$ for some fixed $p\in[0,1/2)$. Let $j^{*}$ be the unique index such that $a_{j^{*}}\le u<a_{j^{*}+1}$. It is well known (see \cite{feige1994computing,karp2007noisy}) that 
we need at least $\Omega\left(p^{-2}{\log N}\right)$ queries on average in order to identify $j^{*}$. 

\noindent
\textbf{Reduction scheme:} We will construct a stochastic optimization problem with the given parameters ($\bar{L},\tau,\alpha$), such that if there exists an algorithm that solves it (with a constant
probability) after $T$ first-order stochastic queries to the oracle $\mathsf{O}^{G}$, then it can be used to identify $j^{*}$ in NBS problem (with the same probability) using at most $2T$ queries.

First, at each iteration $t$, we define a random variable $Z_{t,j}\in\{-1,1\}$ for every $1\le j\le N$ as follows:
\begin{align}
    \mathbb{P}[Z_{t,j}=1]=\begin{cases}
        \frac{1}{2}+p\quad j> j^{*},\\
        \frac{1}{2}-p\quad j\le j^{*}.
    \end{cases}
\end{align}
$Z_{t,j}$ is the answer of the NBS oracle to query ``Is $u>a_{j}$?" at the iteration $t$.

\noindent
In the reduction scheme, we assume that function $F$ has a one-dimensional domain $\mathcal{X}$. The diameter of this domain is $\sup_{x,y\in \mathcal{X}}|x-y|=R$, and without loss of generality, we assume that $\mathcal{X}=[0,R]$. We first divide the interval $[0,R]$ into $N$ equal sub-intervals of length $R/N$ each, and consider the element $a_j$ as the smallest point in the $j$-th interval.

\textbf{NBS oracle:} At each iteration, NBS oracle is queried at a point $x\in \mathcal{X}$ and its response is $(Z_{t,j}, Z_{t,j+1})$, for $x\in[a_{j},a_{j+1})$.

\textbf{Stochastic first-order oracle:}
Using the noisy binary pairs $(Z_{t,j}, Z_{t,j+1})$ from NBS oracle queried at $x\in[a_{j},a_{j+1})$, the output of this oracle at point $x$ is constructed as follows:
\begin{align}\label{def_grad_est}
    \MoveEqLeft[4]f'(x,Z_{t,j},Z_{t,j+1})\nonumber\\
    &=\frac{G}{2}\left(1-g_{j}(x)\right)Z_{t,j}+\frac{G}{2}\left(1+g_{j}(x)\right)Z_{t,j+1},
\end{align}
where $G>0$ is some constant and 
\begin{align}\label{def_g(x)}
    g_{j}(x)=\frac{\left|x-\frac{R}{2N}-a_{j}\right|^{\frac{1}{\alpha-1}}\cdot\text{sgn}\left(x-\frac{R}{2N}-a_{j}\right)}{\left(\frac{R}{2N}\right)^{\frac{1}{\alpha-1}}}\1_{[a_{j},a_{j+1})}(x).
\end{align}
Note that $\mathbb{E}[f'(x,Z_{t,j},Z_{t,j+1})]=F'(x)$ and 
\[
|f'(x,Z_{t,j},Z_{t,j+1})|=\begin{cases}
    G\quad &\text{if }Z_{t,j}=Z_{t,j+1},\\
    G|g_{j}(x)|\quad &\text{if }Z_{t,j}\neq Z_{t,j+1}.
\end{cases}
\]
Hence, $|f'(x,Z_{t,j},Z_{t,j+1})|\le G$.
Taking expectation of $f'(x,Z_{t,j},Z_{t,j+1})$, we obtain
\begin{align}\label{def_grad}
    \MoveEqLeft[4]F'(x)=\mathbb{E}[f'(x,Z_{t,j},Z_{t,j+1})]=
        pG\,\1_{[a_{j^{*}+1}, R]}(x)\nonumber\\
        &
        -pG\,\1_{[0, a_{j^{*}})}(x)+
        pG  g_{j^{*}}(x)\1_{[a_{j^{*}},a_{j^{*}+1})}(x).
\end{align}
Integrating $F'(x)$ with respect to $x$, we get
\begin{align}\label{def_true function}
   F(x)
&= pG\,(x-a_{j^{*}+1})\,\1_{[a_{j^{*}+1},\,R]}(x)\nonumber\\
&+ pG\,(-x+a_{j^{*}})\,\1_{[0,\,a_{j^{*}})}(x)
\nonumber\\
&+ \Big[
  pG\,\frac{\alpha-1}{\alpha}\frac{\bigl|x-\frac{R}{2N}-a_{j^{*}}\bigr|^{\frac{\alpha}{\alpha-1}}}{\left(\frac{R}{2N}\right)^{\frac{1}{\alpha-1}}}
  \nonumber\\
  &\qquad\qquad\qquad\qquad- pG\,\frac{\alpha-1}{2\alpha}\frac{R}{N}
\Big]\,\1_{[a_{j^{*}},\,a_{j^{*}+1})}(x).
\end{align}
Note that by construction, $\min_{x\in \mathcal{X}}F(x)=-pG (\alpha-1){R}/(2\alpha N)$ and $a_{j^{*}}+{R}/(2N)=\argmin_{x\in \mathcal{X}}F(x)$. Moreover, function $F$ given by \eqref{def_true function} is convex and its domain is bounded ($\mathcal{X}=[0,R]$). 
In \Cref{lemma_alpha_PL}, we show that if 
\begin{align}\label{cond_1_cvx}
    \tau\ge \frac{\alpha-1}{\alpha}\frac{R}{2N}(pG)^{1-\alpha},
\end{align}
then $F$ satisfies the local $(\alpha,\tau,R/N)$-gradient-dominance (Assumption \ref{assum_local_PL_alpha}). 
In our reduction, we need to show that if the output of a stochastic first-order method $\hat x$ satisfies $F(\hat{x})-F^{*}\le \varepsilon$, then $j^{*}$ is identified (in other words, $\hat{x}\in[a_{j^{*}},a_{j^{*}+1})$). If  
\begin{align}\label{cond_3}
    pG \frac{\alpha-1}{2\alpha}\frac{R}{N}\ge2\varepsilon,
\end{align}
we get $F(x)-F^{*}>\varepsilon$ for every $x\notin [a_{j^{*}},a_{j^{*}+1})$. Indeed from the definition of the function \eqref{def_true function}, for every $x\notin [a_{j^{*}},a_{j^{*}+1})$, we have
\[
F(x)-F^{*}\ge pG \frac{\alpha-1}{2\alpha}\frac{R}{N}
\]
and if $pG (\alpha-1)/(2\alpha)R/N\ge2\varepsilon$, we get $F(x)-F^{*}>\varepsilon$. 

\noindent
We pick 
\begin{align}\label{eq_chsoen_p_N}
    p=\frac{2\varepsilon^{{1}/{\alpha}}}{G\tau^{1/{\alpha}}},\quad N=\frac{(\alpha-1)R}{(2\alpha)\varepsilon^{{(\alpha-1)}/{\alpha}}\tau^{{1}/{\alpha}}}.
\end{align} 
Subsequently, with these chosen values for $p$ and $N$, the inequalities \eqref{cond_1_cvx} and \eqref{cond_3} are met for every $\varepsilon\le 1$. For $\varepsilon\le ({(\alpha-1)}/{\alpha})^{\alpha}\tau$, we have: ${R}/{N}={2\alpha\varepsilon^{{(\alpha-1)}/{\alpha}}\tau^{{1}/{\alpha}}}{(\alpha-1)^{-1}}\ge \varepsilon$, and therefore, every local $(\alpha,\tau,R/N)$-gradient-dominated function is also a local $(\alpha,\tau,\varepsilon)$-gradient-dominated function. Consequently, $\mathcal{F}^{\mathcal{X}}_{\alpha,\tau,{R}/{N}}\subseteq \mathcal{F}^{\mathcal{X}}_{\alpha,\tau,\varepsilon}$, and as a result, $F\in \mathcal{F}^{\mathcal{X}}_{\alpha,\tau,\varepsilon}$.
\newline
Thus, for $(\mathcal{F}^{\mathcal{X}}_{\alpha,\tau,\varepsilon},\mathsf{O}^{G})$, any stochastic first-order algorithm that converges to an $\varepsilon$-minimizer can be used to identify $j^{*}$ in a NBS problem for appropriately chosen  $p$ and $N$ as in \eqref{eq_chsoen_p_N}. Therefore, the probability-based minimax oracle complexity $\Ts_{\varepsilon}(\mathcal{F}^{\mathcal{X}}_{\alpha,\tau,\varepsilon},\mathsf{O}^{G})$ can be lower bounded by $\Omega\left({p^{-2}}{\log N}\right)$. For every $\varepsilon\le \min\{({(\alpha-1)}/{\alpha})^{\alpha}\tau,1\}$,
\[
\Ts_{\varepsilon}(\mathcal{F}^{\mathcal{X}}_{\alpha,\tau,\varepsilon},\mathsf{O}^{G})=\Omega\left(\frac{G^{2}\tau^{\frac{2}{\alpha}}\log\left(\frac{(\alpha-1) R}{2\alpha\varepsilon^{\frac{\alpha-1}{\alpha}}\tau^{\frac{1}{\alpha}}}\right)}{\varepsilon^{\frac{2}{\alpha}}}\right).
\]
\end{proof}
\begin{lemma}\label{lemma_alpha_PL}
   Function $F$ defined in \eqref{def_true function} satisfies $(\alpha,\tau,R/N)$-gradient-dominance for $\tau\ge (\alpha-1){R}(pG)^{1-\alpha}/(2\alpha N)$.
\end{lemma}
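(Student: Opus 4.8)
The plan is to verify directly, on the three pieces of the piecewise definition of $F$ in \eqref{def_true function}, that the sub-optimality gap is bounded by $\tau\|F'(x)\|^{\alpha}$ for all $x$ in the $(R/N)$-sub-level set. First I would record the two facts established by construction in the main text: $F^{*}=\min_{x\in\mathcal{X}}F(x)=-pG\frac{\alpha-1}{2\alpha}\frac{R}{N}$, attained at $x^{*}=a_{j^{*}}+\frac{R}{2N}$, and the explicit form of $F'(x)$ in \eqref{def_grad}. The key observation is that $S_{R/N}=\{x: F(x)-F^{*}\le R/N\}$ — with the scaling $p,N$ eventually chosen — is contained in the middle interval $[a_{j^{*}},a_{j^{*}+1})$, since on the two outer affine pieces $F(x)-F^{*}\ge pG\frac{\alpha-1}{2\alpha}\frac{R}{N}$, which by the computation already carried out in the proof of Theorem~\ref{lower_bound_PL_convex_NBS} exceeds $R/N$ once $p$ is as in \eqref{eq_chsoen_p_N}. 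Actually, to keep the lemma self-contained and independent of the later choice of $p,N$, I would instead just check the gradient-dominance inequality \emph{everywhere}, which is cleaner: on the affine pieces $F(x)-F^{*}$ is bounded by the size of $F$ on the middle piece plus a linear term, while $|F'(x)|=pG$ is at its maximum, so the inequality there is immediate once it holds at the endpoints of the middle interval.

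The heart of the matter is the middle piece $a_{j^{*}}\le x<a_{j^{*}+1}$. Write $u:=x-\frac{R}{2N}-a_{j^{*}}$, so $|u|\le \frac{R}{2N}$. Then from \eqref{def_true function} and \eqref{def_grad},
\begin{align*}
F(x)-F^{*}&=pG\,\frac{\alpha-1}{\alpha}\,\frac{|u|^{\frac{\alpha}{\alpha-1}}}{(R/2N)^{\frac{1}{\alpha-1}}},\\
|F'(x)|&=pG\,\frac{|u|^{\frac{1}{\alpha-1}}}{(R/2N)^{\frac{1}{\alpha-1}}}.
\end{align*}
Hence
\[
\frac{F(x)-F^{*}}{|F'(x)|^{\alpha}}
=pG\,\frac{\alpha-1}{\alpha}\,\frac{|u|^{\frac{\alpha}{\alpha-1}}(R/2N)^{-\frac{1}{\alpha-1}}}{(pG)^{\alpha}|u|^{\frac{\alpha}{\alpha-1}}(R/2N)^{-\frac{\alpha}{\alpha-1}}}
=\frac{\alpha-1}{\alpha}\,(pG)^{1-\alpha}\left(\frac{R}{2N}\right)^{\frac{\alpha-1}{\alpha-1}}
=\frac{\alpha-1}{\alpha}\,(pG)^{1-\alpha}\,\frac{R}{2N}.
\]
The powers of $|u|$ cancel exactly — that is the point — so the ratio is a constant, namely $\frac{\alpha-1}{\alpha}\frac{R}{2N}(pG)^{1-\alpha}$, and the claimed bound $\tau\ge\frac{\alpha-1}{\alpha}\frac{R}{2N}(pG)^{1-\alpha}$ is precisely what makes $F(x)-F^{*}\le\tau|F'(x)|^{\alpha}$ on this interval. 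For the two outer pieces, $F(x)-F^{*}$ grows linearly in the distance to $x^{*}$ while $|F'(x)|=pG$ is constant and equals its value at the edge of the middle interval; comparing with the value of $F-F^{*}$ attained at that same edge (which we have just shown is $\le\tau(pG)^{\alpha}$), and using that the linear growth on the outer piece is at most the linear growth implied by $|F'|=pG$, gives the inequality there as well — or, restricting as in the NBS reduction to $S_{R/N}\subseteq[a_{j^{*}},a_{j^{*}+1})$, the outer pieces need not even be checked.

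The main obstacle is essentially bookkeeping: making sure the exponent arithmetic $\frac{\alpha}{\alpha-1}-\alpha\cdot\frac{1}{\alpha-1}=0$ and $\frac{1}{\alpha-1}-\alpha\cdot\frac{1}{\alpha-1}=-1$ is done correctly (so that both $|u|$ and the normalization $(R/2N)$ combine to leave exactly the constant above), handling the non-differentiable kinks at $a_{j^{*}}$ and $a_{j^{*}+1}$ (where one should use that the left and right derivatives agree in absolute value, so $|F'|$ is well-defined and the inequality passes to the closure), and being explicit about whether one proves the inequality on all of $\mathcal{X}$ or only on $S_{R/N}$ — I would do the latter, citing the sub-level-set containment argument already present in the proof of Theorem~\ref{lower_bound_PL_convex_NBS}, since that is all Assumption~\ref{assum_local_PL_alpha} requires.
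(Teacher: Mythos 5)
Your central computation on the middle interval $[a_{j^{*}},a_{j^{*}+1})$ --- setting $u=x-\frac{R}{2N}-a_{j^{*}}$ and observing that the powers of $|u|$ cancel, so that $(F(x)-F^{*})/|F'(x)|^{\alpha}$ is the constant $\frac{\alpha-1}{2\alpha}\frac{R}{N}(pG)^{1-\alpha}$ --- is exactly the paper's proof of this lemma, and that part is correct.

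The problem lies in the two ways you propose to deal with points outside that interval; both claims are false. First, the inequality cannot be verified ``everywhere'': on the outer affine pieces $|F'(x)|=pG$ is constant, so $\tau|F'(x)|^{\alpha}=\tau(pG)^{\alpha}$ is a fixed number, while $F(x)-F^{*}=pG(x-a_{j^{*}+1})+pG\frac{\alpha-1}{2\alpha}\frac{R}{N}$ (and symmetrically on the left) grows linearly; for the minimal admissible $\tau=\frac{\alpha-1}{2\alpha}\frac{R}{N}(pG)^{1-\alpha}$ the two sides are equal at the kink $x=a_{j^{*}+1}$, and the inequality fails for every $x$ strictly beyond it. There is no ``linear growth implied by $|F'|=pG$'' on the right-hand side, so the step ``immediate once it holds at the endpoints'' does not go through. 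Second, the claimed containment $\mathcal{S}_{R/N}\subseteq[a_{j^{*}},a_{j^{*}+1})$ is also false under the parameters of the reduction: with $p$ and $N$ as in \eqref{eq_chsoen_p_N}, the value of $F-F^{*}$ at the kinks is $pG\frac{\alpha-1}{2\alpha}\frac{R}{N}=2\epsilon$, whereas $R/N=\frac{2\alpha}{\alpha-1}\epsilon^{(\alpha-1)/\alpha}\tau^{1/\alpha}\ge 2\left(\frac{\alpha}{\alpha-1}\right)^{2}\epsilon>2\epsilon$ in the regime $\epsilon\le((\alpha-1)/\alpha)^{\alpha}\tau$, so the kink value does not exceed $R/N$ and the sub-level set at level $R/N$ strictly contains points of the outer pieces. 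What is true, and what the reduction actually uses via \eqref{cond_3}, is containment at level $\epsilon$ (indeed at any level below $2\epsilon$): outer points have gap at least $2\epsilon>\epsilon$, hence $\mathcal{S}_{\epsilon}\subseteq[a_{j^{*}},a_{j^{*}+1})$. The paper's proof, like your core argument, verifies the dominance inequality only on the middle interval; if you want to add the locality bookkeeping explicitly, it has to be done at level $\epsilon$ (or $2\epsilon$), not at level $R/N$, and certainly not globally on $[0,R]$.
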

\begin{proof}\label{proof_lemma_alpha_PL}
For $x\in [a_{j^{*}},a_{j^{*}+1})$, $F(x)-\min_{x\in \mathcal{X}}F(x)\le \tau|F'(x)|^{\alpha}$ is equivalent to have
\begin{align}
    &F(x)-\min_{x\in \mathcal{X}}F(x)=pG  \frac{\alpha-1}{\alpha}\frac{|x-\frac{R}{2N}-a_{j^{*}}|^{\frac{\alpha}{\alpha-1}}}{\left(\frac{R}{2N}\right)^{\frac{1}{\alpha-1}}}\nonumber\\
    &\qquad\qquad\le \tau\left|pG\frac{|x-\frac{R}{2N}-a_{j}|^{\frac{1}{\alpha-1}}\cdot\text{sgn}(x-\frac{R}{2N}-a_{j})}{\left(\frac{R}{2N}\right)^{\frac{1}{\alpha-1}}}\right|^{\alpha}.
\end{align}
If $\tau\ge (\alpha-1){R}(pG)^{1-\alpha}/(2\alpha N)$, we get $F(x)-\min_{x\in \mathcal{X}}F(x)\le \tau|F'(x)|^{\alpha}$.
\end{proof}
\begin{remark}\label{rmrk_lower_bound_kl_convex}
    In Appendix \ref{append_lower_bound_kl_convex}, we consider the $\phi$-Kurdyka-\L ojasiewicz (KL) inequality \citep{yang2018rsg} and recall the definition of function $\phi$ and $\phi$-KL inequality. For the class of convex functions satisfying the $\phi$-KL inequality with oracle $\mathsf{O}^{G}$ and domain sets $\mathbb{S}_{R}$, we derive the lower bound $\Omega\left(G^{2}(\phi'(\varepsilon))^{2}\log\left({R}/(2\phi(\varepsilon))\right)\right)$. In this setting, the upper bound $T=\mathcal{O}\left({G^{2}(\phi(\varepsilon))^{2}\log(1/\varepsilon)}/{\varepsilon^{2}}\right)$ from \cite[Corollary 14]{yang2018rsg} is larger than our lower bound by a multiplicative factor of $\mathcal{O}\left(\left({\phi(\varepsilon)}/{(\varepsilon\phi'(\varepsilon))}\right)^{2}\cdot {\log(1/\varepsilon)}/{\log(R/2\phi(\varepsilon))}\right)$. It is noteworthy that this factor becomes a constant for $\phi(s)=C\cdot s^{1-{1}/{\alpha}}$ for $\alpha>1$ and some constant $C>0$. It would be interesting to characterize the minimax oracle complexity of first-order methods for achieving a global-optimum point of a convex bounded domain function that satisfies $\phi$-KL inequality for  other choices of function $\phi$.  
    
\end{remark}
\subsection{Proof of Remark \ref{rmrk_lower_bound_kl_convex}}\label{append_lower_bound_kl_convex}

\begin{assumption}\label{assum_kl}
 Consider a continuous concave function $\phi:[0, \zeta) \to\mathbb{R}^{+}$ such that (i) $\phi(0) = 0$; (ii) $\phi$ is continuous on $(0, \zeta)$; (iii) and for all $s \in (0, \zeta)$, $\phi'(s)>0$. Function $f(x)$ satisfies the $\phi$-Kurdyka-\L ojasiewicz ($\phi$-KL) property at $\bar{x}$ if there exist $\zeta\in (0, \infty]$, a neighborhood $U_{\bar{x}}$ of $\bar{x}$ and for all $x\in U_{\bar{x}} \cap \{x : f(\bar{x}) < f(x) < f(\bar{x}) + \zeta\}$, the following inequality holds
 \begin{align}
     \phi'(f(x)-f(\bar{x}))\cdot \|\partial f(x)\|_{2} \ge 1,
 \end{align}
 where $\|\partial f(x)\|_{2} := \min_{\gv\in\partial f(x)} \|\gv\|_{2}$.
\end{assumption}
\textbf{Stochastic first-order oracle:}
Using the noisy binary pairs $(Z_{t,j}, Z_{t,j+1})$ from NBS oracle which is queried at $x\in[a_{j},a_{j+1})$, the output of this oracle at point $x$ is constructed as follows:
\begin{align}\label{def_grad_est_kl}
    f'(x,Z_{t,j},Z_{t,j+1})&=\frac{G}{2}\left(1-g_{j}(x)\right)Z_{t,j}\nonumber\\
    &\qquad\qquad+\frac{G}{2}\left(1+g_{j}(x)\right)Z_{t,j+1},
\end{align}
where $G$ is some constant and 
\begin{align}\label{def_g(x)_kl}
    \MoveEqLeft[1]g_{j}(x)=\nonumber\\
    &\frac{\psi'(|x-\frac{R}{2N}-a_{j}|)\cdot\text{sgn}(x-\frac{R}{2N}-a_{j})}{\psi'(\frac{R}{2N})}\cdot\1_{[a_{j},a_{j+1})}(x),
\end{align}
where $\psi\equiv \phi^{-1}$ and then $\psi:[0,\infty)\to [0,\infty)$ is a continuous convex function such that $\psi(0)=0$, $\psi'(x)>0$ for $x\in\mathbb{R}^{+}$. Note that
\[
|f'(x,Z_{t,j},Z_{t,j+1})|=\begin{cases}
    G\quad &\text{if }Z_{t,j}=Z_{t,j+1},\\
    G|g_{j}(x)|\quad &\text{if }Z_{t,j}\neq Z_{t,j+1}.
\end{cases}
\]
Hence, $|f'(x,Z_{t,j},Z_{t,j+1})|\le G$.
Taking expectation of $f'(x,Z_{t,j},Z_{t,j+1})$, we obtain
\begin{align}\label{def_grad_kl}
    \MoveEqLeft[4]F'(x)=\mathbb{E}[f'(x,Z_{t,j},Z_{t,j+1})]\nonumber\\
&= pG\,\1_{[a_{j^{*}+1},\,R]}(x)
\nonumber\\
&- pG\,\1_{[0,\,a_{j^{*}})}(x)
+ pG\,g_{j^{*}}(x)\,\1_{[a_{j^{*}},\,a_{j^{*}+1})}(x).
\end{align}
Integrating $F'(x)$, we have
\begin{align}\label{def_true function_kl}
    F(x)
&= pG\,(x-a_{j^{*}+1})\,\1_{[a_{j^{*}+1},\,R]}(x)\nonumber\\
&+ pG\,(-x+a_{j^{*}})\,\1_{[0,\,a_{j^{*}})}(x)\nonumber\\
&+ \left[
  pG\,\frac{\psi\!\left(\left|x-\frac{B}{2N}-a_{j^{*}}\right|\right)}{\psi'\!\left(\frac{R}{2N}\right)}-pG\,\frac{\psi\!\left(\frac{R}{2N}\right)}{\psi'\!\left(\frac{R}{2N}\right)}
\right] \1_{[a_{j^{*}},\,a_{j^{*}+1})}(x)
\end{align}
Note that by construction, $\min_{x\in \mathcal{X}}F(x)=pG {\psi({R}/{2N})}/{\psi'({R}/{2N})}$ and $a_{j^{*}}+{R}/{(2N)}=\argmin_{x\in \mathcal{X}}F(x)$. Function $F$ is convex and its domain is bounded ($\mathcal{X}=[0,R]$). From Lemma \ref{lemma_kl}, if 
\begin{align}\label{cond_1_cvxl}
    pG\ge\psi'(R/2N),
\end{align}
then $F$ satisfies $\phi$-KL property (Assumption \ref{assum_kl}) in the interval $U_{a_{j^{*}+R/2N}}=[a_{j^{*}},a_{j^{*}+1})$.
In the reduction, we need to show that if the output of a stochastic first-order method $\hat x$ satisfies $F(\hat{x})-F^{*}\le \varepsilon$, then $j^{*}$ is identified (more precisely, $\hat{x}\in[a_{j^{*}},a_{j^{*}+1})$). If  
\begin{align}\label{cond_33}
    pG\frac{\psi(R/2N)}{\psi'(R/2N)}\ge\varepsilon,
\end{align}
for every $x\notin [a_{j^{*}},a_{j^{*}+1})$, we get $F(x)-F^{*}\ge\varepsilon$. Indeed from the definition of the function \eqref{def_true function_kl}, for every $x\notin [a_{j^{*}},a_{j^{*}+1})$, we have
\[
F(x)-F^{*}\ge pG\frac{\psi(R/2N)}{\psi'(R/2N)},
\]
and if $pG{\psi(R/2N)}(\psi'(R/2N))^{-1}>\varepsilon$, we get $F(x)-F^{*}>\varepsilon$.

Let $p=(G\phi'(\varepsilon))^{-1}$ and $N={R}(2\phi(\varepsilon))^{-1}$. Then both conditions \eqref{cond_1_cvxl} and \eqref{cond_33} hold with equality. Therefore, the minimax oracle complexity in this case, can be lower bounded by $\Omega\left(p^{2}\log N\right)$ which is
\begin{align}
    \Omega\left(G^{2}(\phi'(\varepsilon))^{2}\log\left(\frac{R}{2\phi(\varepsilon)}\right)\right).
\end{align}

\begin{lemma}\label{lemma_kl}
    Function $F$ defined in \eqref{def_true function_kl} satisfies $\phi$-KL property when $pG\ge \psi'(B/2N)$.
\end{lemma}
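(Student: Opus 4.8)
The plan is to verify that the function $F$ from \eqref{def_true function_kl} satisfies the $\phi$-KL property on the interval $U_{a_{j^{*}}+R/2N}=[a_{j^{*}},a_{j^{*}+1})$, exactly mirroring the argument of Lemma \ref{lemma_alpha_PL}. Recall $\phi$ and $\psi$ are inverse continuous concave/convex functions with $\phi(0)=\psi(0)=0$ and strictly positive derivatives on the positive reals. First I would note that on the middle branch of \eqref{def_true function_kl}, with the substitution $r:=|x-\tfrac{R}{2N}-a_{j^{*}}|\in[0,\tfrac{R}{2N}]$, we have
\begin{align*}
F(x)-F^{*} = pG\,\frac{\psi(r)}{\psi'(\tfrac{R}{2N})}, \qquad |F'(x)| = pG\,\frac{\psi'(r)}{\psi'(\tfrac{R}{2N})},
\end{align*}
where I use $\min_{x\in\mathcal{X}}F(x) = -pG\,\psi(\tfrac{R}{2N})/\psi'(\tfrac{R}{2N})$ (I would first double-check the sign in \eqref{def_true function_kl}, which appears to have a typo: the additive constant should be $-pG\,\psi(\tfrac{R}{2N})/\psi'(\tfrac{R}{2N})$ so that $F$ attains value $-pG\,\psi(\tfrac{R}{2N})/\psi'(\tfrac{R}{2N})$ at its minimizer $a_{j^{*}}+\tfrac{R}{2N}$, matching the phrasing in the proof body).

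The $\phi$-KL inequality at $\bar{x}=a_{j^{*}}+\tfrac{R}{2N}$ asks that $\phi'\big(F(x)-F^{*}\big)\cdot|F'(x)|\ge 1$ for $x$ in the neighborhood with $F^{*}<F(x)<F^{*}+\zeta$. Substituting the displayed expressions, this reads
\begin{align*}
\phi'\!\left(\frac{pG\,\psi(r)}{\psi'(\tfrac{R}{2N})}\right)\cdot \frac{pG\,\psi'(r)}{\psi'(\tfrac{R}{2N})}\ge 1.
\end{align*}
The key step is to use the inverse-function identity $\phi'(\psi(s)) = 1/\psi'(s)$ valid for $s>0$, together with the hypothesis $pG\ge \psi'(\tfrac{R}{2N})$. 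Since $\psi$ is convex with $\psi(0)=0$ and $r\le \tfrac{R}{2N}$, we have $\psi'(r)\le \psi'(\tfrac{R}{2N})$, hence $pG\,\psi(r)/\psi'(\tfrac{R}{2N})\le \psi(r)$ (as $\psi(r)\le r\psi'(r)\le\ldots$, more directly $pG/\psi'(\tfrac{R}{2N})\ge 1$ gives $pG\,\psi(r)/\psi'(\tfrac{R}{2N})\ge\psi(r)$; wait — I need the argument of $\phi'$ to be controlled so that $\phi'$, which is \emph{decreasing}, is bounded below). Because $\phi$ is concave, $\phi'$ is nonincreasing, so a \emph{larger} argument gives a \emph{smaller} $\phi'$; thus I want an \emph{upper} bound on $pG\,\psi(r)/\psi'(\tfrac{R}{2N})$. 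From $pG\ge\psi'(\tfrac{R}{2N})$ I only get a lower bound, so I instead bound the product directly: using $\psi'(r)\le\psi'(\tfrac{R}{2N})\le pG$ in the factor $pG\,\psi'(r)/\psi'(\tfrac{R}{2N})$ is the wrong direction too. The clean route is: set $s=\psi(r)$ so $r=\phi(s)$, $\psi'(r)=1/\phi'(s)$; the left side becomes $\phi'\!\big(\tfrac{pG}{\psi'(R/2N)}s\big)\cdot\tfrac{pG}{\psi'(R/2N)}\cdot\tfrac{1}{\phi'(s)}$, and with $c:=pG/\psi'(\tfrac{R}{2N})\ge 1$ this is $c\,\phi'(cs)/\phi'(s)$. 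Concavity of $\phi$ gives $\phi'(cs)\ge \tfrac1c\phi'(s)$ for $c\ge1$? No — I will instead use that $t\mapsto t\phi'(t)$... Actually the simplest is: $c\,\phi'(cs)\ge \phi'(s)$ iff $cs\cdot\phi'(cs)\ge s\cdot\phi'(s)\cdot\ldots$; concavity alone does not give this without extra structure, so I would instead argue as Lemma \ref{lemma_alpha_PL} does — only claim the inequality holds with the \emph{specific} choice $p=(G\phi'(\epsilon))^{-1}$, $N=R/(2\phi(\epsilon))$ made in the reduction, where $\psi'(\tfrac{R}{2N})=\psi'(\phi(\epsilon))=1/\phi'(\epsilon)=pG$, so $c=1$ and the product equals $\phi'(s)/\phi'(s)=1$ exactly, verifying $\phi$-KL on the whole interval.

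The main obstacle, then, is exactly this: for general $pG\ge\psi'(\tfrac{R}{2N})$ the inequality $c\,\phi'(cs)\ge\phi'(s)$ need not follow from concavity alone, so I expect the honest statement (and the one actually used) is that equality $pG=\psi'(\tfrac{R}{2N})$, i.e. $c=1$, is what is invoked — and indeed condition \eqref{cond_1_cvx_kl} is applied in the proof body with equality. I would therefore structure the proof of Lemma \ref{lemma_kl} as: (i) compute $F(x)-F^{*}$ and $|F'(x)|$ on the middle branch via the substitution $r=|x-\tfrac{R}{2N}-a_{j^{*}}|$; (ii) invoke $\psi\equiv\phi^{-1}$ to write $\phi'(\psi(r))\psi'(r)=1$; (iii) under $pG\ge\psi'(\tfrac{R}{2N})$ observe $pG\,\psi'(r)/\psi'(\tfrac{R}{2N})\ge\psi'(r)$ and $pG\,\psi(r)/\psi'(\tfrac{R}{2N})\ge\psi(r)$, then since $\phi'$ is nonincreasing conclude $\phi'(pG\,\psi(r)/\psi'(\tfrac{R}{2N}))\le\phi'(\psi(r))$ — which is the \emph{wrong} direction, confirming that the argument genuinely needs the boundary case; hence (iii$'$) state the result for $pG=\psi'(\tfrac{R}{2N})$, where $\phi'(F(x)-F^{*})\,|F'(x)|=\phi'(\psi(r))\,\psi'(r)=1$, which is precisely the $\phi$-KL inequality, and note that decreasing $p$ or increasing $N$ can only be handled by re-checking, so the reduction fixes the boundary value.
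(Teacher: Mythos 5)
Your computation is essentially the paper's own proof: on the middle branch of \eqref{def_true function_kl} one writes $F(x)-F^{*}=pG\,\psi(r)/\psi'(\tfrac{R}{2N})$ and $|F'(x)|=pG\,\psi'(r)/\psi'(\tfrac{R}{2N})$ with $r=|x-\tfrac{R}{2N}-a_{j^{*}}|$, and then invokes $(\psi^{-1})'=1/\psi'$; your sign correction for $\min_{x}F(x)$ also matches the actual construction. The issue you flag is real, and it is in the paper's proof rather than in yours: the paper's chain contains the step $(\psi^{-1})'\bigl(F(x)-F^{*}\bigr)=(\psi^{-1})'\bigl(\psi(r)\bigr)$, which silently discards the factor $c:=pG/\psi'(\tfrac{R}{2N})\ge 1$. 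Since $\phi'$ is nonincreasing, this substitution is only valid when $c=1$, and for $c>1$ the needed bound $c\,\phi'\bigl(c\,\psi(r)\bigr)\,\psi'(r)\ge 1$ does not follow from concavity of $\phi$ alone; for instance $\psi(s)=e^{s^{2}}-1$, i.e.\ $\phi(t)=\sqrt{\log(1+t)}$, satisfies Assumption \ref{assum_kl} yet violates the inequality at interior points of the interval for moderate $c>1$. So your conclusion — that the argument cleanly establishes the lemma in the boundary case $pG=\psi'(\tfrac{R}{2N})$, where $\phi'(F(x)-F^{*})\,|F'(x)|\equiv 1$, and that this is precisely the case invoked downstream because the reduction chooses $p=(G\phi'(\epsilon))^{-1}$ and $N=R/(2\phi(\epsilon))$ so that \eqref{cond_1_cvx_kl} holds with equality — is correct, and is in fact more careful than the published proof; the only caveat is that, as stated with ``$\ge$'', the lemma is not fully covered by either argument, though nothing in the paper's application relies on the strict-inequality case.
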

\begin{proof}
By using $\phi\equiv\psi^{-1}$ and the condition $pG\ge \psi'(R/2N)$, we have
\begin{align}
    &\phi'(F(x)-\min_{x\in \mathcal{X}}F(x))=(\psi^{-1})'(F(x)-\min_{x\in \mathcal{X}}F(x))\nonumber\\
    &=(\psi^{-1})'\left( \psi(|x-\frac{R}{2N}-a_{j}|)\right)=\frac{1}{\psi'(|x-\frac{R}{2N}-a_{j}|)}\nonumber\\
    &\ge\frac{\psi'(\frac{R}{2N})}{pG\cdot \psi'(|x-\frac{R}{2N}-a_{j}|)}=\frac{1}{|F'(x)|}
\end{align}
\end{proof}

\section{Comparison between Theorem \ref{lower_bound_PL_convex_NBS} and  \citep{foster2019complexity}}\label{append_comparison_relate_work}

Regarding Theorem \ref{lower_bound_PL_convex_NBS}, we used a similar approach (reduction to NBS problem) as \citep{foster2019complexity}. \cite{foster2019complexity} uses the reduction to NBS problem in order to derive a complexity lower bound for stochastic first-order methods converging to the approximate first-order stationary point in expectation $\mathbb{E}[\|\nabla F(\hat{x})\|]\le \varepsilon$ over the class of convex smooth functions. The differences of our lower bound proof with \Cref{lower_bound_PL_convex_NBS} are:
\begin{itemize}[leftmargin=*]
    \item \cite{foster2019complexity} derived their lower bound to find the average first-order stationary point while we are using this approach to derive the lower bound to find the approximate minimizer in average, i.e.,  $\mathbb{E}[F(\hat{x})]-F^{*}\le \varepsilon$. For convex objective functions, the complexity of finding approximate stationary points is different from the complexity of finding approximate minimizers. For example, \citep{foster2019complexity} showed that while SGD is (worst-case) optimal for stochastic convex optimization for finding approximate minimizer, it appears to be far from optimal for finding near-stationary points (a version of SGD3 \citep{allen2018make} is optimal in this case).
    \item The gradient estimator in \citep{foster2019complexity} is 
\( 
f'(x,Z_{t,j},Z_{t,j+1})
= -2\varepsilon\,\1_{(-\infty,0)}(x)
+ 2\varepsilon\,\1_{[R,\infty)}(x)
+ \sum_{j=1}^{N-1} \1_{[a_j,a_{j+1})}(x)\big(h_j(x)Z_{t,j+1}+(1-h_j(x))Z_{t,j}\big).
\)
    where $h_{j}:=(x-a_{j})(R/N)^{-1}$. A naive approach to extend their construction to the case where the function satisfies local $(\alpha,\tau,\varepsilon)$-P\L\ property (Assumption \ref{assum_local_PL_alpha}) is the straightforward replacement of $h_{j}(x)$ with ${|x-a_{j}|^{{1}/(\alpha-1)}\text{sgn}(x-a_{j})}{\left({R}/{n}\right)^{-{1}/(\alpha-1)}}.$
    A drawback of this construction is that if the minimum of $f(x)$ is close to $a_{j^{*}}$ and when the approximate minimizer of the function is in $[a_{j^{*}-1},a_{j^{*}})$, then $[a_{j^{*}},a_{j^{*}+1})$ is not identified and the reduction to NBS problem does not work. 
    The solution is to use a version of $f'(x,Z_{t,j},Z_{t,j+1})$ in \eqref{def_grad_est} which has the following two properties: 1) the function satisfying local $(\alpha,\tau,\varepsilon)$-P\L\, 2) Finding the approximate minimizer of this function uniquely identifies the interval $[a_{j^{*}},a_{j^{*}+1})$.
\end{itemize}
\section{Experiment details: ReLU and Dictionary learning}\label{append_experiment}
This appendix details the problem and protocols used in our two empirical studies in \Cref{sec:experiment}:
(1) binary classification with a ReLU network, and
(2) dictionary learning.\\

Our theory shows that the optimal local oracle complexity takes the form
\[
\mathrm{poly}\!\big(\tau(\alpha)\big)\,\varepsilon^{-2/\alpha},
\]
where $\tau(\alpha)$ is the local $\alpha$–P{\L} constant around the target component $\mathcal M$ and
$\varepsilon$ is the target accuracy. Thus, larger $\alpha$ improves the $\varepsilon$–exponent
($2/\alpha$ decreases) but may increase $\tau(\alpha)$; the best choice balances these two effects.

Empirically, we estimate
\[
\tau(\alpha)\ :=\ \sup_{x\in \mathbb B_2(\hat x;r)}
\frac{F(x)-F(\hat x)}{\|\nabla F(x)\|^{\alpha}},
\]
and typically observe that $\tau(\alpha)$ is finite for all $\alpha\in[1,2]$ and fairly flat near $\alpha=1$. (i) For very small $\varepsilon$, the term $\varepsilon^{-2/\alpha}$ dominates, so taking $\alpha$ closer to $2$
is advantageous. (ii) For moderate $\varepsilon$, a flatter (often smaller) $\tau(\alpha)$ near $\alpha=1$ can yield the smaller overall bound. This explains why we care about $\alpha\in[1,2]$ and motivates tuning $\alpha$ to minimize the predicted cost.

Let $\hat{x}$ be the terminal iterate returned by full-batch gradient descent (FB-GD). In all reported runs we have $\|\nabla F(\hat{x})\|\le10^{-5}$, so $\hat{x}$ is an \emph{almost critical point}.
Fix a radius $r>0$ and consider the ball $\mathbb{B}_2(\hat{x};r) := \{x:\|x-\hat{x}\|_2\le r\}$ in parameter space.
For $\alpha\in[1,2]$, define
\[
\mathsf{R}_\alpha(x):=\frac{F(x)-F(\hat{x})}{\|\nabla F(x)\|^\alpha},
\qquad
\tau(\alpha)=\sup_{x\in\mathbb{B}_2(\hat{x};r)}\mathsf{R}_\alpha(x),
\]
	so that $F(x)-F(\hat{x})\le \tau(\alpha)\,\|\nabla F(x)\|^\alpha$ on $\mathbb{B}_2(\hat{x};r)$.
	Empirically, we approximate the supremum by sampling $m$ points $\{x_i\}_{i=1}^m\subset\mathbb{B}_2(\hat{x};r)$ (uniform in direction, radius drawn with density proportional to $r^{d-1}$) and compute
	\[
	\widehat{\tau}(\alpha)=\max_{1\le i\le m}\frac{F(x_i)-F(\hat{x})}{\|\nabla F(x_i)\|^\alpha}.
	\]
		In \Cref{fig:binary_append} we use $m=10^5$ probes for each radius $r\in\{0.5,0.05,0.005\}$; in \Cref{fig:realdata_relu_append} we use $(r,m)=(0.35,450)$ for MNIST and $(r,m)=(0.40,600)$ for WDBC.
		In the plots, we also report the baseline $10\widehat{\tau}(1)$ and highlight in green the set
		$\{\alpha\in[1,2]:\widehat{\tau}(\alpha)\le 10\widehat{\tau}(1)\}$, which visualizes the near-flat regime around $\alpha=1$.

\subsection{Binary classification with a one-hidden-layer ReLU network}
\label{app:relu-setup}

\paragraph{Data.}
Two-dimensional synthetic binary classification with four Gaussian blobs (means at $(\pm 1.5,\pm 1.5)$, isotropic covariance $0.35I$); labels form an XOR pattern.

	\paragraph{Model and loss.}
	A $1$-hidden-layer ReLU network with $m$ hidden units:
	$x\mapsto a=\mathrm{ReLU}(xW_1+b_1)\mapsto \ell=a^\top W_2+b_2\mapsto p=\sigma(\ell)$.
	Loss $F$ is the average binary cross-entropy, optionally with $\ell_2$ weight decay $\frac{\lambda}{2}(\|W_1\|_F^2+\|W_2\|_2^2)$.

\paragraph{Training.}
FB-GD with fixed step size; stop either at a gradient-norm threshold or a max-iteration budget; set $\hat{x}$ to the final iterate.

\paragraph{Neighborhood sampling and estimation.}
Form $\mathbb{B}_2(\hat{x};R)$ in \emph{parameter space} and draw $m$ probing parameters.
Compute gaps $\Delta_i:=F(x_i)-F(\hat{x})$ and gradient norms $g_i:=\|\nabla F(x_i)\|$.
Evaluate $\widehat{\tau}(\alpha)$ on a grid $\alpha\in[1,2]$, plot $\widehat{\tau}(\alpha)$ together with $10\widehat{\tau}(1)$, and highlight the set $\{\alpha:\widehat{\tau}(\alpha)\le 10\widehat{\tau}(1)\}$.\\

\Cref{fig:binary_append} reports the curves
\(
\widehat{\tau}(\alpha)
\)
	for radii \(r\in\{0.5,\,0.05,\,0.005\}\). By definition, \(\widehat{\tau}(\alpha)\) is a (sampled) supremum over the ball \(B_2(x^\star;r)\), hence it is \emph{monotone in \(r\)}. 
	The radius \(r\) also selects the \emph{local geometry} being used. Smaller \(r\) isolates the immediate neighborhood of \(x^\star\), where the landscape is closer to a single regime and may admit a \emph{smaller constant} and a \emph{wider near-flat regime} around $\alpha=1$.
	As \(r\downarrow 0\), the curve reflects the most local behavior around \(x^\star\) and the set $\{\alpha:\widehat{\tau}(\alpha)\le 10\widehat{\tau}(1)\}$ typically expands.

	\begin{figure}
	    \centering
	    \includegraphics[width=1\linewidth]{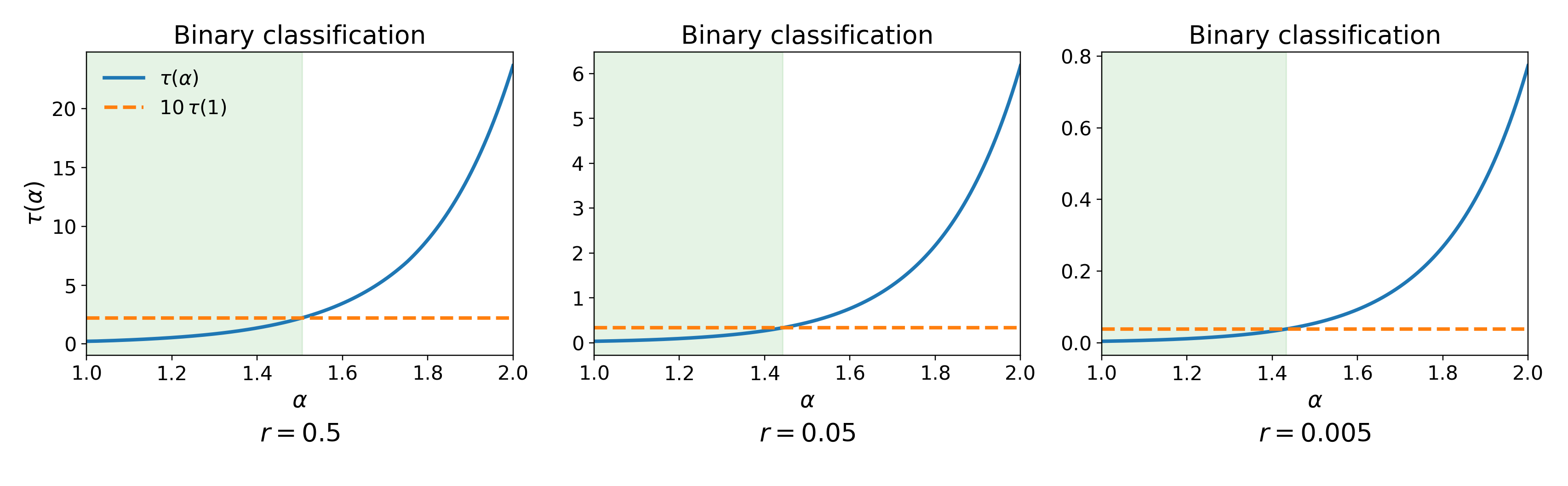}
	    \caption{\textbf{Synthetic ReLU classification (XOR).} Weight decay $\lambda=0$. Variation of $\tau(\alpha)$ over $\alpha\in[1,2]$ for three values of $r$. The dashed line is $10\tau(1)$ and the green region is $\{\alpha:\tau(\alpha)\le 10\tau(1)\}$. Ranges: $r=0.5$, $\tau(\alpha):[1,2]\to$ $[0.221,\,23.67]$; $r=0.05$, $\tau(\alpha):[1,2]\to$ $[0.0336,\,6.17]$; $r=0.005$, $\tau(\alpha):[1,2]\to$ $[0.00382,\,0.773]$.}
	    \label{fig:binary_append}
	\end{figure}

		\paragraph{Real-data binary classification.}
		We additionally repeat the same procedure on two standard real datasets: MNIST (digits $0$ vs.\ $1$) and the UCI Breast Cancer Wisconsin Diagnostic dataset (WDBC; malignant vs.\ benign).
		For both datasets, we fit the same one-hidden-layer ReLU network by full-batch gradient descent to obtain an almost critical point $\hat x$, then probe a small ball $\mathbb{B}_2(\hat x;r)$ in parameter space by uniform sampling and compute $\widehat{\tau}(\alpha)$ over $\alpha\in[1,2]$.
		\Cref{fig:realdata_relu_append} shows the resulting envelopes.

		\begin{figure}[h!]
		    \centering
		    \includegraphics[width=1\linewidth]{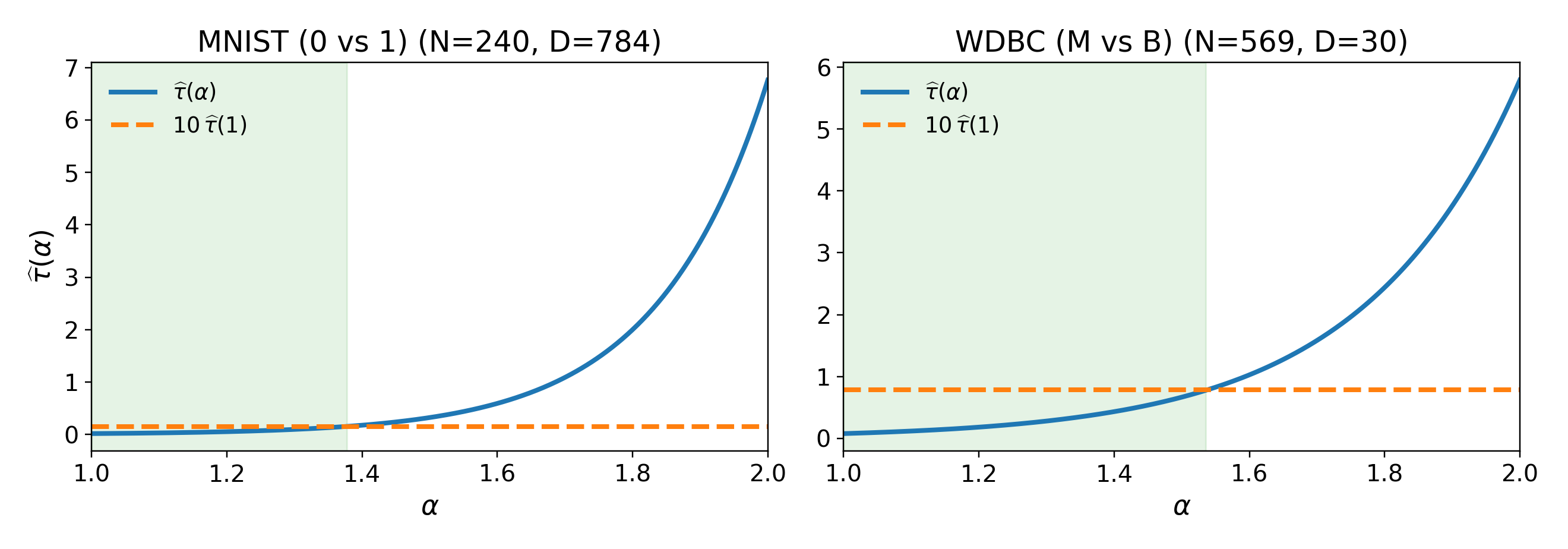}
		    \caption{\textbf{Real-data ReLU classification.} Weight decay $\lambda=10^{-4}$. Estimated envelopes $\alpha\mapsto \widehat{\tau}(\alpha)$ for MNIST ($0$ vs.\ $1$) and WDBC (malignant vs.\ benign). The dashed line is $10\widehat{\tau}(1)$ and the green region is $\{\alpha:\widehat{\tau}(\alpha)\le 10\widehat{\tau}(1)\}$. Ranges of $\widehat{\tau}(\alpha)$ over $\alpha\in[1,2]$: MNIST $[0.015,\,6.76]$, WDBC $[0.078,\,5.79]$.}
		    \label{fig:realdata_relu_append}
		\end{figure}

\subsection{Dictionary learning with smooth sparsity}
\label{app:dict-setup}

\paragraph{Data.}
Generate a ground-truth dictionary $D_0\in\mathbb{R}^{p\times k}$ with unit-norm columns; draw sparse codes $A_0\in\mathbb{R}^{k\times n}$ with $s$ nonzeros per column; form $X=D_0A_0+\xi$, $\xi\sim\mathcal{N}(0,\sigma^2I)$.

\paragraph{Objective.}
We minimize
\begin{align}
    F(D,A)&=\frac{1}{2n}\|DA-X\|_F^2
\;+\;\lambda_1\!\sum_{ij}\phi_\delta(A_{ij})+\;\frac{\lambda_2}{2}\|A\|_F^2
\;+\;\frac{\mu}{4}\sum_{j=1}^k\big(\|d_j\|_2^2-1\big)^2,
\end{align}
where $\phi_\delta$ is the pseudo-Huber penalty
$\phi_\delta(a)=\delta^2(\sqrt{1+(a/\delta)^2}-1)$
with derivative $\phi'_\delta(a)=a/\sqrt{1+(a/\delta)^2}$.
The last term softly enforces unit-norm atoms and removes the $D\!-\!A$ scaling ambiguity.

\paragraph{Training.}
FB-GD with backtracking (Armijo) on $(D,A)$ until a small gradient norm is reached; set $\hat{x}=(\hat{D},\hat{A})$.

		\paragraph{Neighborhood sampling and estimation.}
		Sample $\mathbb{B}_2(\hat{x};R)$ in the \emph{joint} $(D,A)$ parameter space by perturbing $\hat{D}$ and $\hat{A}$ together.
		In the main-body dictionary learning experiment (\Cref{fig:placeholder}, bottom) we use $R=0.1$ and $m=2500$ probe points.
		Compute $\Delta_i$ and $g_i$ as above and evaluate $\widehat{\tau}(\alpha)$ on $\alpha\in[1,2]$.



\section{Additional Experiments: Polynomial Matrix Factorization and Deep Linear Factorization}
\label{app:mf_experiments}

In both experiments we validate \Cref{assum_PL_alpha} (local $\alpha$–P\L{}) by estimating, on a bounded neighborhood of an almost–critical point,
\[
R_\alpha(\theta)\;:=\;\frac{F(\theta)-F(\theta^\star)}{\|\nabla F(\theta)\|^\alpha},
\qquad
	\tau(\alpha)\;:=\;\sup_{\theta\in B_2(\theta^\star;r)} R_\alpha(\theta),
	\quad \alpha\in[1,2],
	\]
	where $\theta^\star$ is the terminal iterate of full-batch gradient descent (FB-GD), hence an \emph{almost} critical point ($\|\nabla F(\theta^\star)\|$ small).
	In the plots below, we visualize the envelope $\alpha\mapsto\tau(\alpha)$ and highlight in green the range $\{\alpha:\tau(\alpha)\le 10\tau(1)\}$ to make the ``flat'' regime around $\alpha=1$ directly comparable across setups.
	\subsection{Polynomial Matrix Factorization}
\paragraph{Problem.}
Given $X\in\mathbb{R}^{p\times n}$ and a target rank $r$, factorize $X\approx U V$ with
\[
F(U,V)\;=\;\frac{1}{2n}\,\|UV - X\|_F^2\;+\;\frac{\lambda}{2}\bigl(\|U\|_F^2+\|V\|_F^2\bigr),
\quad U\in\mathbb{R}^{p\times r},\;V\in\mathbb{R}^{r\times n}.
\]
This is a \emph{polynomial} loss, thus fits our tame/definable setting.

	\paragraph{Synthetic setup.}
	We generate $X=U_0V_0+\xi$ with $U_0\in\mathbb{R}^{p\times r}$, $V_0\in\mathbb{R}^{r\times n}$ i.i.d.\ Gaussian and $\xi\sim\mathcal{N}(0,\sigma^2 I)$; optionally, we inject a small fraction of \emph{spikes} (large entries) to test robustness.\footnote{We use a spike fraction $s$ and a multiplier $\kappa$ relative to $\operatorname{std}(X)$; both are reported with the plots.}
	We run FB-GD with backtracking to obtain $\theta^\star=(U^\star,V^\star)$ and estimate $\widehat{\tau}(\alpha)$ by probing a neighborhood around $\theta^\star$ (radius $R=0.05$, $m=1500$ probes).
	\begin{figure}[h!]
	  \centering
	  \includegraphics[width=.6\linewidth]{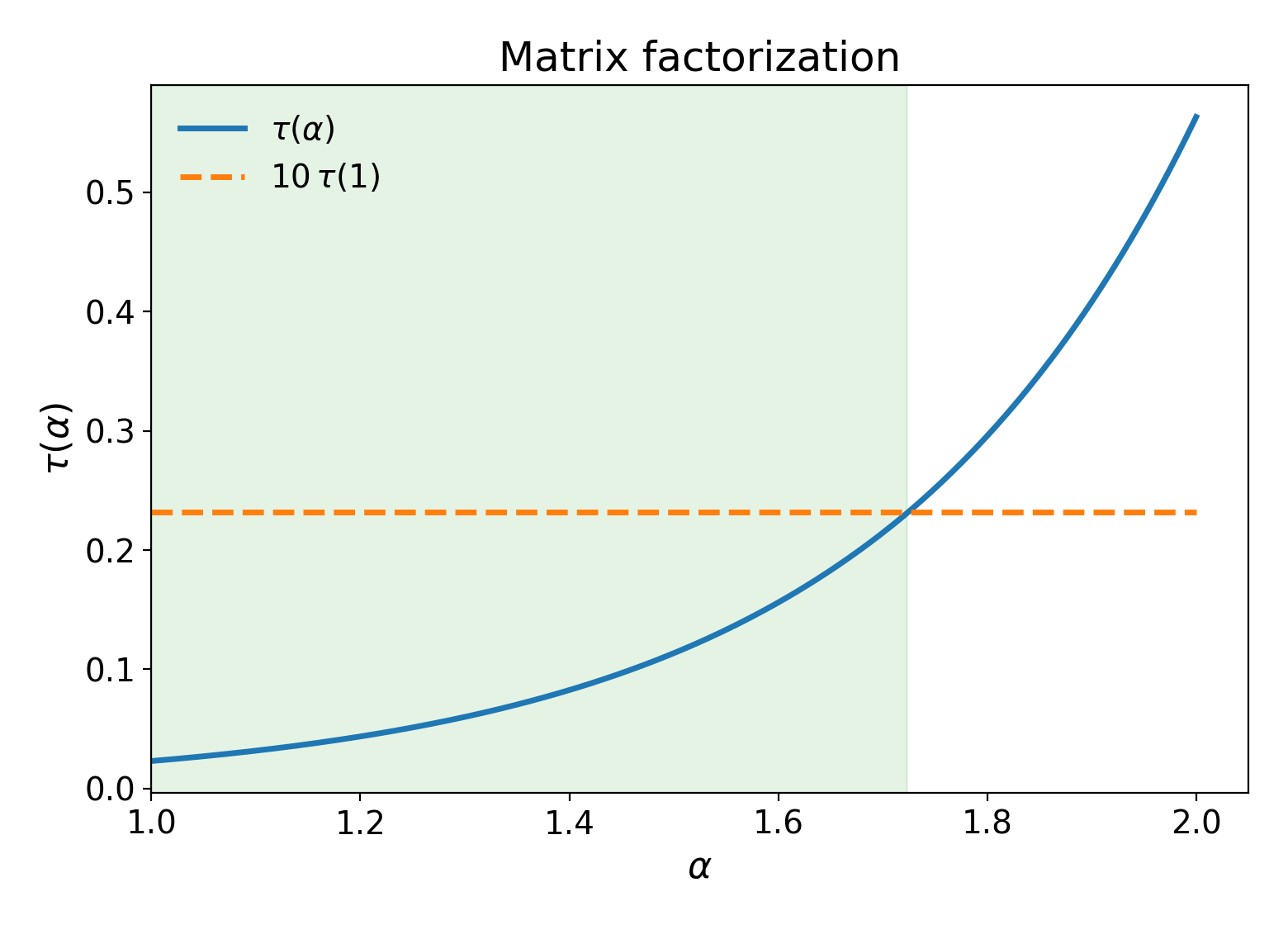}
	  \caption{\textbf{Synthetic polynomial MF.} $\lambda=10^{-3}$, $R=0.05$, $m=1500$. Variation of $\widehat{\tau}(\alpha)$ for $\alpha\in[1,2]$ (synthetic $X=U_0V_0+\xi$). Range of $\widehat{\tau}(\alpha)$ over $\alpha\in[1,2]$: $[0.023,\,0.563]$.}
	  \label{fig:synthetic_mf_append}
		\end{figure}
		\paragraph{Explanation.}
		In \Cref{fig:synthetic_mf_append} we plot the empirical envelope $\widehat{\tau}(\alpha)=\max_i \Delta_i/g_i^\alpha$ on a fixed-radius neighborhood around the terminal iterate $\theta^\star$.
		The curve is relatively flat for $\alpha$ close to $1$ and then increases as $\alpha$ approaches $2$, indicating that (at this radius) near-linear domination in $\|\nabla F\|$ provides a smaller constant than near-quadratic domination.

	\paragraph{Setup and estimation.}
	We fit by FB-GD with backtracking to obtain an almost critical point $\theta^\star=(U^\star,V^\star)$, then probe a ball $B_2(\theta^\star;R)$ by uniform sampling.
	For each probe $\theta_i$ we compute the optimality gap $\Delta_i:=F(\theta_i)-F(\theta^\star)$ and gradient norm $g_i:=\|\nabla F(\theta_i)\|$. For a grid $\alpha\in[1,2]$ we estimate
	\[
	\widehat{\tau}(\alpha)\;=\;\max_i \frac{\Delta_i}{g_i^\alpha}.
	\]
	We report the curve $\alpha\mapsto\widehat{\tau}(\alpha)$ and highlight $\{\alpha:\widehat{\tau}(\alpha)\le 10\widehat{\tau}(1)\}$ (for the real-data run below: $R=0.006$, $m=600$ probes in each subplot).
	\vspace{-0.2em}

	\begin{figure}[h!]
	  \centering
	  \includegraphics[width=1\linewidth]{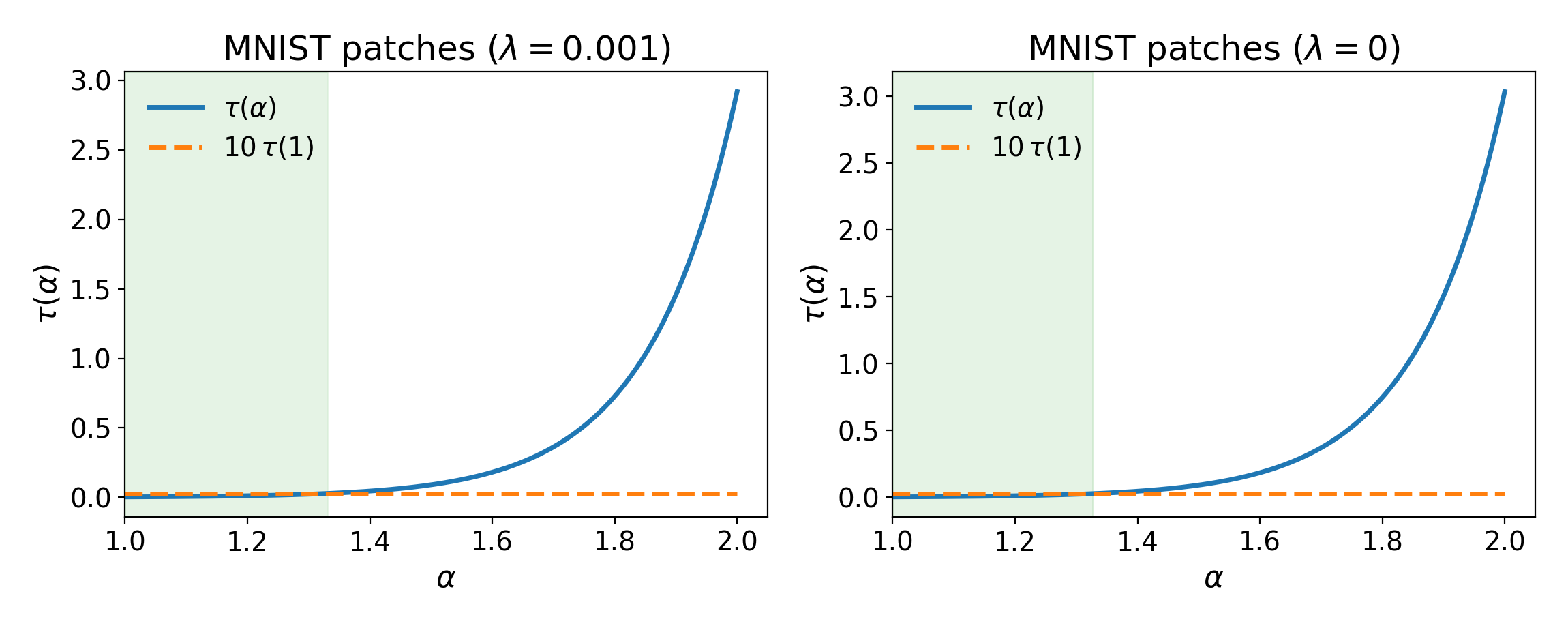}
	  \caption{\textbf{Real-data polynomial MF (MNIST patches).} $R=0.006$, $m=600$. Estimated envelopes $\alpha\mapsto \tau(\alpha)$ for two settings: regularized ($\lambda=10^{-3}$; range $[0.0028,\,2.919]$) and unregularized ($\lambda=0$; range $[0.0028,\,3.034]$). The green region is $\{\alpha:\tau(\alpha)\le 10\tau(1)\}$, mirroring the synthetic plots.}
	  \label{fig:realdata_mf_append}
		\end{figure}
		\paragraph{Explanation.}
		In \Cref{fig:realdata_mf_append} the two subplots compare how ridge regularization affects the local envelope around an almost-critical point fitted on MNIST patches.
		Both regularized and unregularized runs exhibit a broad ``flat'' regime near $\alpha=1$ (green band), while $\tau(\alpha)$ grows with $\alpha$ and is largest near $\alpha=2$.
		The similarity of the curves here suggests that, at this radius, the local PL-type behavior is driven primarily by the data-fit term rather than the ridge term.

\subsection{Deep Linear Matrix Factorization}
\paragraph{Problem.}
Approximate $X\in\mathbb{R}^{p\times n}$ by a product of $L$ linear factors
\[
X\approx W_0 W_1\cdots W_{L-1},
\qquad
F(W_0,\ldots,W_{L-1})\;=\;\frac{1}{2n}\,\bigl\|W_0\cdots W_{L-1}-X\bigr\|_F^2
+\frac{\lambda}{2}\sum_{\ell=0}^{L-1}\|W_\ell\|_F^2,
\]
with $W_0\in\mathbb{R}^{p\times r_1}$, $W_\ell\in\mathbb{R}^{r_\ell\times r_{\ell+1}}$, and $W_{L-1}\in\mathbb{R}^{r_{L-1}\times n}$. This loss is a \emph{polynomial of degree $2L$} in the parameters.

\paragraph{Setup.}
We train by FB-GD (with backtracking) to obtain $\theta^\star=(W_0^\star,\ldots,W_{L-1}^\star)$ and probe $B_2(\theta^\star;R)$ as above, producing pairs $(\Delta_i,g_i)$.
We compute the empirical envelope
\[
\widehat{\tau}(\alpha)\;=\;\max_i \frac{\Delta_i}{g_i^\alpha},\qquad \alpha\in[1,2].
\]

		\paragraph{Synthetic setup.}
		We also report the same envelope estimation for a synthetic instance (low-rank ground truth plus Gaussian noise), using $R=0.002$ and $m=2000$ probes:
		\begin{figure}[h!]
		  \centering
		  \includegraphics[width=.6\linewidth]{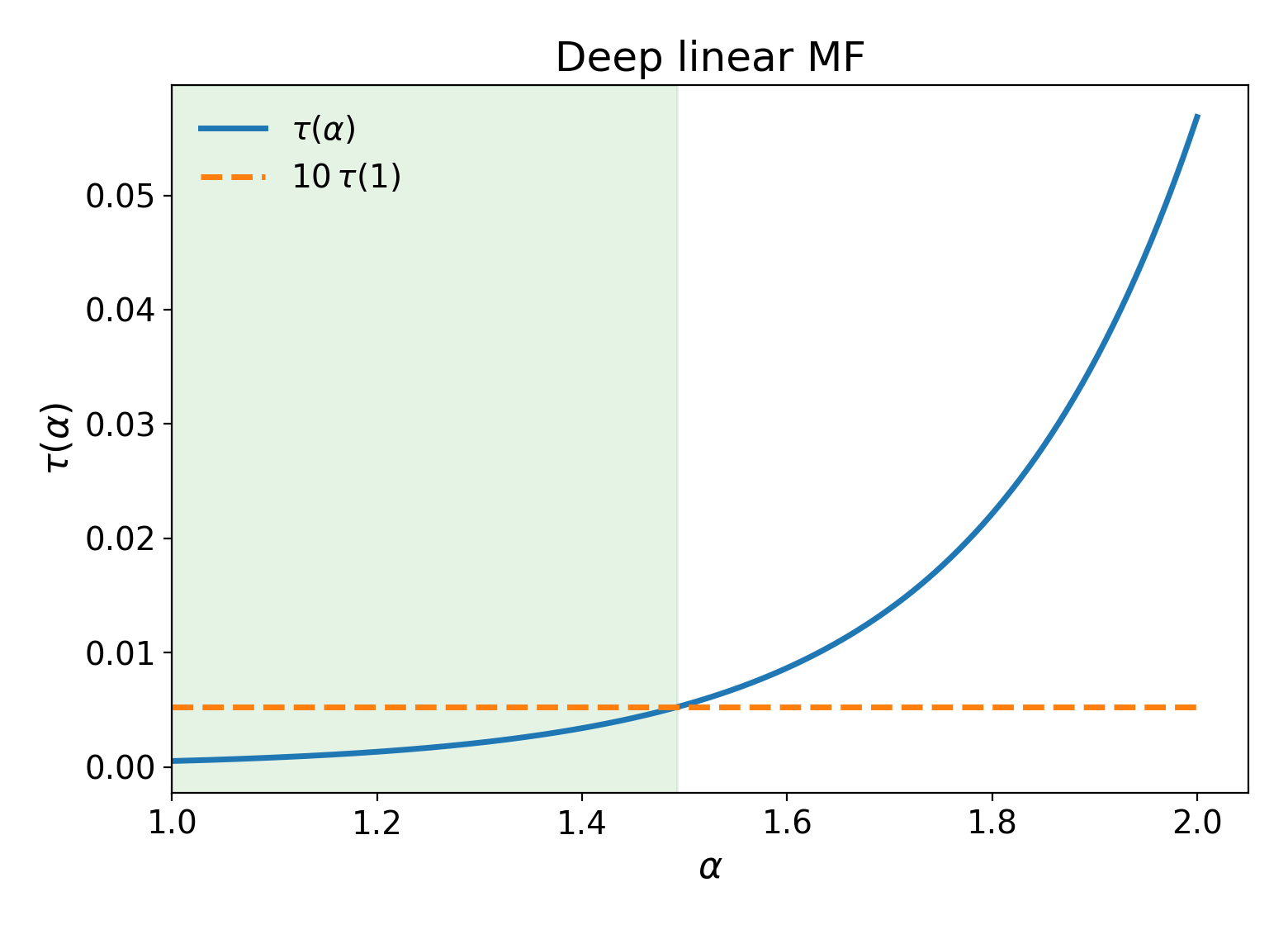}
		  \caption{\textbf{Synthetic deep linear MF.} $\lambda=10^{-3}$, $R=0.002$, $m=2000$, $L=4$. Variation of $\widehat{\tau}(\alpha)$ over $\alpha\in[1,2]$ (synthetic $X$). Range of $\widehat{\tau}(\alpha)$ over $\alpha\in[1,2]$: $[5.3\!\times\!10^{-4},\,0.0569]$.}
		  \label{fig:synthetic_deepmf_append}
		\end{figure}
		\paragraph{Explanation.}
		In \Cref{fig:synthetic_deepmf_append}, on a synthetic instance, the estimated envelope stays small for $\alpha$ close to $1$ and increases as $\alpha\uparrow 2$.
		This reflects the fact that, even for polynomial objectives, the best local domination exponent can be strictly less than $2$ on a fixed neighborhood, and the constant for $\alpha=2$ can be much larger than for $\alpha\approx 1$.

	\paragraph{Real-data deep linear MF.}
	We repeat the same procedure on a real dataset by extracting $8\times 8$ patches from MNIST training images and treating them as columns of $X$.
	We compare a regularized run ($\lambda>0$) to an unregularized run with $\lambda=0$, using $R=0.001$ and $m=700$ probes in each subplot.
	\begin{figure}[h!]
	  \centering
	  \includegraphics[width=1\linewidth]{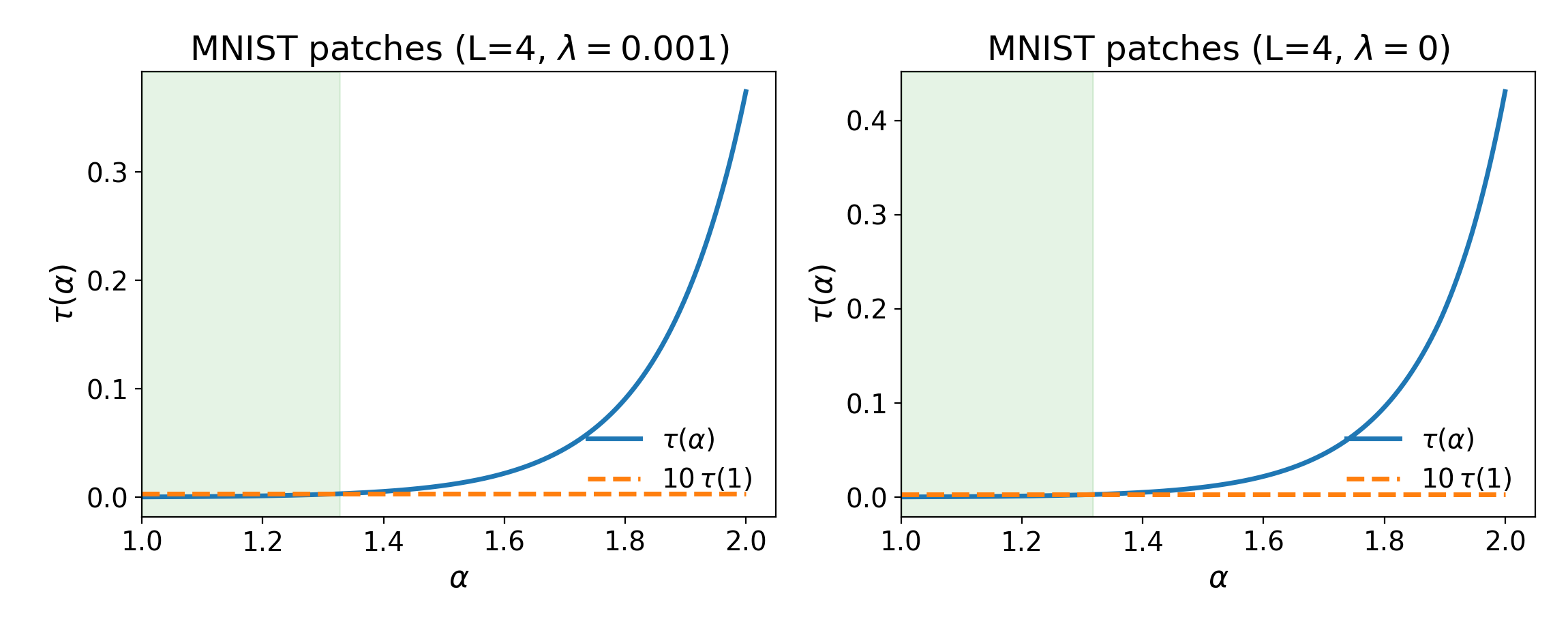}
	  \caption{\textbf{Real-data deep linear MF (MNIST patches).} $R=0.001$, $m=700$, $L=4$. Estimated envelopes $\alpha\mapsto \tau(\alpha)$ for a regularized run ($\lambda=10^{-3}$; range $[3.3\!\times\!10^{-4},\,0.374]$) and an unregularized run ($\lambda=0$; range $[2.9\!\times\!10^{-4},\,0.431]$). The green region is $\{\alpha:\tau(\alpha)\le 10\tau(1)\}$, matching the synthetic plot convention.}
	  \label{fig:realdata_deepmf_append}
		\end{figure}
		\paragraph{Explanation.}
		In \Cref{fig:realdata_deepmf_append} we repeat the envelope computation on MNIST patches.
		The green region highlights the near-$\alpha=1$ regime where $\tau(\alpha)\le 10\tau(1)$ and the curve is comparatively flat, while the envelope grows sharply as $\alpha$ approaches $2$.
		The $\lambda=0$ run has a slightly larger maximum envelope over $\alpha\in[1,2]$, consistent with regularization improving local conditioning in the deep parameterization.

	\paragraph{Degenerate critical point (unregularized).}
	For depth $L\ge 3$ and $\lambda=0$, the trivial point $\theta^\star=0$ is a stationary point of deep linear MF, and the local geometry can be much flatter than quadratic.
	We estimate the envelope around $\theta^\star=0$ on the same MNIST-patch matrix using $R=0.005$ and $m=8000$ probe points:
	\begin{figure}[h!]
	  \centering
	  \includegraphics[width=.6\linewidth]{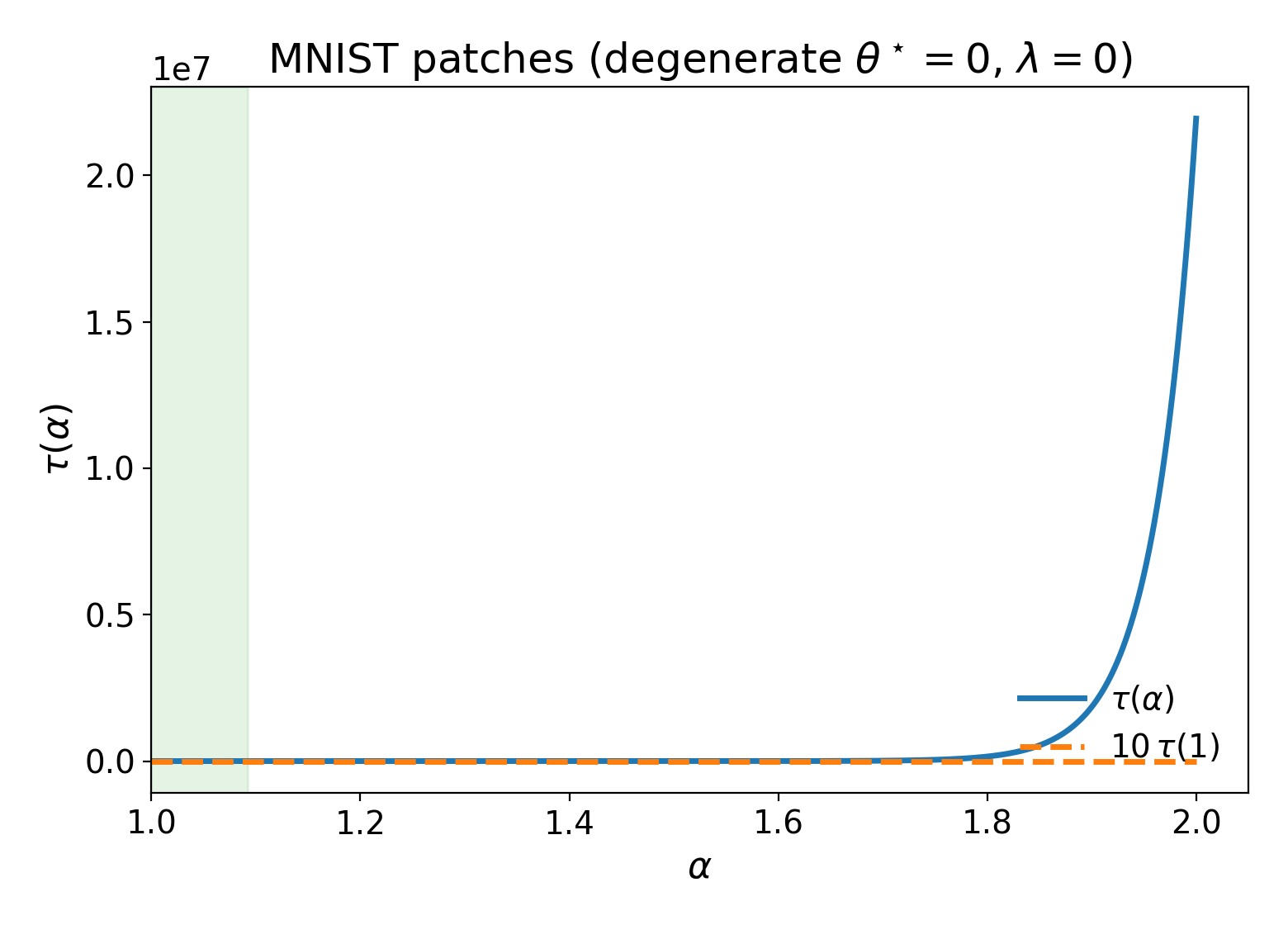}
	  \caption{\textbf{Real-data deep linear MF around the degenerate point $\theta^\star=0$ (MNIST patches).} $\lambda=0$, $R=0.005$, $m=8000$, $L=4$. Estimated envelope $\alpha\mapsto \tau(\alpha)$ around $\theta^\star=0$. Range of $\tau(\alpha)$ over $\alpha\in[1,2]$: $[3.9\!\times\!10^{-4},\,2.2\!\times\!10^{7}]$.}
	  \label{fig:realdata_deepmf_zero_crit_append}
		\end{figure}
		\paragraph{Explanation.}
		In \Cref{fig:realdata_deepmf_zero_crit_append} we intentionally probe around a \emph{degenerate} stationary point.
		The green region marks the small ``flat'' regime near $\alpha=1$ (where $\tau(\alpha)\le 10\tau(1)$), but the envelope then grows extremely rapidly, yielding a very large $\tau(2)$.
		This illustrates that, without regularization, deep linear MF can have regions where a quadratic ($\alpha=2$) PL-type inequality holds only with an enormous constant (or effectively fails to be informative), even though smaller exponents around $\alpha\simeq 1$ can still yield reasonable constants on the same neighborhood.

\end{document}